\documentclass[10pt,twoside,reqno]{amsart}
\usepackage{amssymb, latexsym}
\usepackage[italian,english]{babel}
\usepackage[latin1]{inputenc} 

\usepackage{color}

\setlength{\textwidth}{14.00 cm} 

\numberwithin{equation}{section}


\def\Xint#1{\mathchoice 
  {\XXint\displaystyle\textstyle{#1}}%
  {\XXint\textstyle\scriptstyle{#1}}%
  {\XXint\scriptstyle\scriptscriptstyle{#1}}%
  {\XXint\scriptscriptstyle\scriptscriptstyle{#1}}%
  \!\int} 
\def\XXint#1#2#3{{\setbox0=\hbox{$#1{#2#3}{\int}$} 
  \vcenter{\hbox{$#2#3$}}\kern-.5\wd0}} 
\def\-int{\Xint -}

\newcommand{\R}{\mathbb{R}}

\newcommand{\V}{\varphi}
\newcommand{\z}{\overline{z}}
\newcommand{\X}{\mathbb{X}}
\newcommand{\Y}{\mathbb{Y}}

\DeclareMathOperator{\dive}{div}

\DeclareMathOperator{\F}{\mathcal{F}}
\DeclareMathOperator{\B}{\mathcal{B}}
\DeclareMathOperator{\E}{\mathbb{E}}
\DeclareMathOperator{\A}{\mathcal{A}}

\newtheorem{prop}{Proposition}[section]
\newtheorem{lem}{Lemma}[section]
\newtheorem{thm}{Theorem}[section]
\newtheorem{defn}{Definition}[section]
\newtheorem{cor}{Corollary}[section]

\newtheorem{remark}{Remark}[section]
\newtheorem{assumption}[thm]{Assumption}


\begin{document}
\title[Partial Regularity Results...]{Partial Regularity Results \\
for Asymptotic Quasiconvex Functionals \\
with General Growth}    

\author{Teresa Isernia, Chiara Leone and  Anna Verde}
\address{Dipartimento di Matematica e Applicazioni "R. Caccioppoli"\\
         Universit\`a degli Studi di Napoli Federico II\\
         via Cinthia, 80126 Napoli, Italy}
\email{teresa.isernia@unina.it, chileone@unina.it, anverde@unina.it}

\maketitle

\begin{abstract}
We prove partial regularity for minimizers of vectorial  integrals of the Calculus of Variations, with general growth condition,  imposing quasiconvexity assumptions only in an asymptotic sense. 
\end{abstract}

\section{\bf Introduction}
\noindent
In this paper we study variational integrals of the type
\begin{equation*}
\F(u):=\int_{\Omega} f(Du) \,dx \quad \mbox{ for } u:\Omega \rightarrow \R^{N}
\end{equation*}
where $\Omega$ is an open bounded set in $\R^{n}$, $n\geq 2$, $N\geq 1$. 
Here $f:\R^{Nn}\rightarrow \R$ is a continuous function satisfying a $\V$-growth condition:
\begin{equation*}
|f(z)|\leq C(1+ \varphi(|z|) ), \quad \forall z\in \R^{Nn},
\end{equation*}
where  $C$ is a positive constant and $\V$ is a given $N$-function (see Definition \ref{defV}).

\noindent
Some examples of $N$-functions are the following:
\begin{align*}
&\V(t)= t^{p} \quad 1<p<\infty,\\
&\V(t)=t^{p}\log^{\alpha}(1+t), \, p>1 \mbox{ and } \alpha > 0.
\end{align*}
If $\varphi$ is an $N$-function satisfying the $\Delta_2$-condition (see Section \ref{defi}),  by $L^\varphi(\Omega)$ and $W^{1,\varphi}(\Omega)$  we
denote the classical Orlicz and Orlicz-Sobolev spaces, i.e.\ $u \in
L^\varphi(\Omega)$ if and only if $\displaystyle{\int_\Omega \varphi(|u|)\,dx < \infty}$ and
$u \in W^{1,\varphi}(\Omega)$ if and only if $u,  Du \in
L^\varphi(\Omega)$. The Luxembourg norm is defined as follows:
\begin{align*}
  \|u\|_{L^\varphi(\Omega)}=\inf \Big\{\lambda>0 : \int_{\Omega} \varphi
    \bigg(\frac{|u(x)|}{\lambda} \bigg)\,dx\leq 1\Big\}.
\end{align*}
With this norm $L^\varphi(\Omega)$ is a Banach space.

\noindent
If there is no misunderstanding, we will write  $\|u\|_{\varphi}$.
Moreover, we denote by $W^{1,\varphi}_0(\Omega)$ the closure of $C^{\infty}_{c}(\Omega)$ functions with respect to the norm
\begin{align*}
  \|u\|_{1,\varphi}=\|u\|_\varphi+\| Du\|_\varphi
\end{align*}
and by $W^{-1,\varphi}(\Omega)$ its dual.
\newpage

\noindent
We will consider  the following definition of a minimizer of $\mathcal{F}$.
\begin{defn}
A map $u\in W^{1, \V}(\Omega, \R^{N})$ is a $W^{1,\V}$-minimizer of $\mathcal{F}$ in $\Omega$ if 
$$
\mathcal{F}(u)\leq \mathcal{F}(u+\xi)
$$
for every $\xi \in W^{1, \V}_{0}(\Omega, \R^{N})$. 
\end{defn}

\medskip

\noindent
Let us recall the notion of quasiconvexity introduced by Morrey \cite{Morrey}:

\begin{defn}[Quasiconvexity]
A continuous function $f:\R^{Nn}\rightarrow \R$ is said to be quasiconvex if and only if
\begin{equation*}
\-int_{\B_{1}} f(z+D\xi)\, dx \geq f(z)
\end{equation*}
holds for every $z\in \R^{Nn}$ and every smooth function $\xi: \B_{1} \rightarrow \R^{N}$ with compact support in 
the unit ball $\B_{1}$ in $\R^{n}$. 
\end{defn}
\noindent Let us note that by scaling and translating, the unit ball in the definition above may be replaced by an arbitrary ball in $\R^{n}$. 
\smallskip

\noindent
The quasiconvexity was originally introduced for proving the lower semicontinuity and the existence of minimizers of variational integrals of the Calculus of Variations. In fact, assuming a power growth condition on $f$, quasiconvexity is a necessary and sufficient condition for the sequential lower semicontinuity on $W^{1,p}(\Omega,\R^N)$, $p>1$ (see \cite{AF1} and \cite{Marc}). In the regularity theory a stronger definition, the strict quasiconvexity, is needed, a notion which has nowadays become a common condition in the vectorial Calculus of Variations (see \cite{evans},\cite{AF2}, \cite{CFM}).

\noindent 
In order to treat the general growth case, we consider the notion of  strictly $W^{1,\V}$-quasiconvexity introduced in \cite{DLSV}.

\begin{defn}[Strict $W^{1,\V}$-quasiconvexity]  
A continuous function $f:\R^{Nn}\rightarrow \R$ is said to be strictly $W^{1,\varphi}$-quasiconvex if there exists a positive constant $k>0$ such that
\begin{equation*}
\-int_{\B_{1}} f(z+D\xi)\, dx \geq f(z) + k\-int_{\B_{1}} \varphi_{|z|}(|D\xi|)\, dx
\end{equation*}
for all $\xi\in C^{1}_{0}(\B_{1})$, for all $z\in \R^{Nn}$, where $\varphi_{a}(t)\sim t^{2}\varphi''(a+t)$
for $a,t\geq 0$. 
\end{defn}

\noindent
A precise definition of $\varphi_a$ is given in Section \ref{defi}. 
\smallskip

\noindent
In this paper we will exhibit an adequate notion of strict $W^{1,\V}$-quasiconvexity at infinity which we will call $W^{1,\V}$-asymptotic quasiconvexity. We will establish several characterizations of this notion and we will prove a partial regularity result, namely that minimizers are Lipschitz continuous on an open and dense subset of $\Omega$. We remark that a counterexample  of \cite{SS} shows that it is not possible to establish regularity outside a negligible set (which would be the natural thing in the vectorial regularity theory). So, our regularity result generalizes the ones given in \cite{SS} and \cite{CPSV} for integrands with a power growth condition which become strictly convex and strictly quasiconvex near infinity, respectively. 

\noindent
We also point out that in recent years a growing literature has considered the subject of asymptotic regular problems: regularity theory for integrands with a particular structure near infinity has been investigated first in \cite{CE} and subsequentely in \cite{GM}, \cite{R}, \cite{CGM}, \cite{DK}, \cite{LPV}, \cite{PV}, \cite{F}, \cite{FPV}, \cite{DSV2}, \cite{DKZ}.
\smallskip

\noindent
We deal with the problem wondering if, when you {\it localize at infinity} the natural assumptions to have regularity, this regularity breaks down or not. It is the same question faced in \cite{AG} and \cite{AF3}, where you do not require a global strict convexity or quasiconvexity assumption: all the hypotheses are localized in some point $z_0$ and you obtain that minimizers are H{\"o}lder continuous near points where the integrand function is "close" to the value $z_0$.

\noindent
In order to achieve the regularity result we have to prove an {\it excess decay estimate} (see Section $7$). In the power case the main idea is to use a blow-up argument based strongly on the homogeneity of $\V(t)=t^{p}$. Here we have to face with the lack of the homogeneity since the general growth condition. Thus one makes use of the so-called $\A$-harmonic approximation proved in \cite{DLSV} (see also \cite{S, DG, DGK, DM, DM1} for the power case). Such tool allows us to compare the solutions of our problem with the solution of the regular one in terms of the closeness of the gradient.


\section{\bf Definitions and assumptions}\label{defi}
\noindent
To simplify the presentation, the letters $c, C$ will denote generic positive constants, which may change from line to line, but does not depend on the crucial quantities. \\
For $v\in L^{1}_{loc}(\R)$ and a ball $\B_{r}(x_{0})\subset \R^{n}$ we define
$$
(v)_{\B_{r}(x_{0})}:= \-int_{\B_{r}(x_{0})} v(x)\,dx := \frac{1}{|\B_{r}(x_{0})|} \int_{\B_{r}(x_{0})} v(x)\, dx
$$
where $|\B_{r}(x_{0})|$ is the $n$-dimensional Lebesgue measure of $\B_{r}(x_{0})$. When it is clear from the context we shall omit the center as follows: $\B_{r}\equiv \B_{r}(x_{0})$. 

\subsection{$N$-functions}

The following definitions and results are standard in the context of $N$-functions (see \cite{raoren}).
\begin{defn}\label{defV}
A real function $\varphi: [0, \infty)\rightarrow [0, \infty)$ is said to be an $N$-function if  $\varphi(0)=0$ and 
there exists a right continuous nondecreasing derivative $\V'$ satisfying $\varphi'(0)=0$,
$\varphi'(t)>0$ for $t>0$ and $\displaystyle{\lim_{t\rightarrow \infty} \varphi'(t)=\infty}$. Especially $\varphi$ is convex. 
\end{defn}

\begin{defn}
We say that $\varphi$ satisfies the $\Delta_{2}$-condition (we shall write $\V\in \Delta_{2}$) if there exists a constant $c>0$ such that  
$$
\varphi(2t)\leq c\,\varphi(t) \quad \mbox{ for all } t\geq 0.
$$
We denote the smallest possible constant by $\Delta_{2}(\varphi)$. 
\end{defn}

\noindent
We shall say that two real functions $\varphi_1$ and $\varphi_2$ are {\it equivalent} and write $\varphi_1\sim\varphi_2$ if  there exist constants $c_{1}, c_{2}>0$ such that $c_{1}\varphi_1(t)\leq \varphi_2(t)\leq c_{2}\varphi_1(t)$ if $t\ge 0$. 

\noindent
Since $\varphi(t)\le\varphi(2t)$ the $\Delta_{2}$-condition implies $\varphi(2t)\sim \varphi(t)$. Moreover if $\V$ is a function satisfying the $\Delta_{2}$-condition, then $\V(t)\sim \V(at)$ uniformly in $t\geq 0$ for any fixed $a>1$.

\noindent
Let us also note that, if $\varphi$ satisfies the $\Delta_2$-condition, then any $N$-function which is equivalent to $\varphi$ satisfies this condition too.
\smallskip

\noindent

\noindent
By $(\varphi')^{-1}:[0, \infty)\rightarrow [0, \infty)$ we denote the function
\begin{equation*}
(\varphi')^{-1}(t):=\sup\{s\geq 0: \varphi'(s)\leq t\}.
\end{equation*}
If $\varphi'$ is strictly increasing, then $(\varphi')^{-1}$ is the inverse function of $\varphi'$. 

\noindent
Then $\varphi^{*}:[0, \infty) \rightarrow [0,\infty)$ with
\begin{equation*}
\varphi^{*}(t):=\int_{0}^{t} (\varphi')^{-1}(s)\,ds
\end{equation*}
is again an $N$-function and for $t>0$ it results $(\varphi^{*})'(t)=(\varphi')^{-1}(t)$. It is the complementary function of $\varphi$. Note that $\varphi^{*}(t)=\sup_{s\geq 0}(st-\varphi(s))$ and $(\varphi^{*})^{*}=\varphi$. 
Examples of such complementary pairs are
\begin{align*}
&\V(t)= \frac{t^{p}}{p} (\ln t)^{\gamma_{1}} (\ln \ln t)^{\gamma_{2}} \cdots (\ln \ln \cdots \ln t)^{\gamma_{n}}, \\
&\V^{*}(t)= \frac{t^{q}}{q} [(\ln t)^{-\gamma_{1}} (\ln \ln t)^{-\gamma_{2}} \cdots (\ln \ln \cdots \ln t)^{-\gamma_{n}}]^{q-1}
\end{align*}
with $1<p<\infty$, $\frac{1}{p}+ \frac{1}{q}=1$ and $\gamma_{i}$ are arbitrary numbers. 

\noindent
If $\V, \V^{*}$ satisfy the $\Delta_{2}$-condition we will write that $\Delta_{2}(\V, \V^{*})<\infty$. 
Assume that $\Delta_{2}(\V, \V^{*})<\infty$. Then for all $\delta>0$ there exists $c_{\delta}$ depending only on $\Delta_{2}(\varphi, \varphi^{*})$ such that
for all $s,t\geq 0$ it holds that
\begin{align*}
&t\, s\leq \delta \, \varphi(t)+c_{\delta} \, \varphi^{*}(s). 
\end{align*}
This inequality is called Young's inequality. For all $t\geq 0$
\begin{align*}
&t\leq \V^{-1}(t) (\V^{*})^{-1}(t)\leq 2t \\
&\frac{t}{2}\varphi'\Bigl(\frac{t}{2}\Bigr) \leq \varphi(t) \leq t\varphi'(t) \\
&\varphi\Bigl(\frac{\varphi^{*}(t)}{t}\Bigr) \leq \varphi^{*}(t) \leq \varphi\Bigl(\frac{2\varphi^{*}(t)}{t}\Bigr).
\end{align*}
Therefore, uniformly in $t\geq 0$,
\begin{equation}\label{new}
\varphi(t)\sim t\varphi'(t), \quad \varphi^{*}(\varphi'(t))\sim \varphi(t), 
\end{equation}
where constants depend only on $\Delta_{2}(\varphi, \varphi^{*})$.


\noindent
\begin{defn}
We say that an $N$-function $\varphi$ is of type $(p_{0}, p_{1})$ with $1\leq p_{0}\leq p_{1}<\infty$ if
\begin{equation}\label{max}
\varphi(st)\leq C \max\{s^{p_{0}}, s^{p_{1}}\}\varphi(t) \quad \forall s,t\geq 0.
\end{equation} 
\end{defn}

\noindent
The following Lemma can be found in \cite{DLSV} (see Lemma 5).

\begin{lem}\label{p0p1}
Let $\V$ be an $N$-function with $\V\in\Delta_2$ together with its conjugate. Then $\V$ is of type $(p_{0}, p_{1})$ with $1< p_{0}<p_{1}<\infty$ where $p_0$ and $p_1$ and the constant $C$ depend only on $\Delta_2(\V,\V^*)$.
\end{lem}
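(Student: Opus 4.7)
The plan is to reduce the type estimate to a uniform two-sided bound on the elasticity $a_\varphi(t) := t\varphi'(t)/\varphi(t)$. Once I know $1 < p_0 \leq a_\varphi(t) \leq p_1 < \infty$ for all $t > 0$, the identity
\[
\frac{d}{ds}\log\varphi(st) = \frac{a_\varphi(st)}{s}
\]
together with integration from $1$ to $s$ when $s \geq 1$ and from $s$ to $1$ when $s \leq 1$ immediately yields $\varphi(st) \leq s^{p_1}\varphi(t)$ for $s \geq 1$ and $\varphi(st) \leq s^{p_0}\varphi(t)$ for $s \leq 1$. Combining the two cases gives the desired $\varphi(st) \leq C\max\{s^{p_0}, s^{p_1}\}\varphi(t)$, even with $C = 1$.

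For the upper bound on $a_\varphi$, I exploit monotonicity of $\varphi'$: $\varphi(2t) \geq \int_t^{2t}\varphi'(s)\,ds \geq t\,\varphi'(t)$, so the $\Delta_2$-condition gives $t\varphi'(t) \leq \varphi(2t) \leq \Delta_2(\varphi)\varphi(t)$, and I take $p_1 := \Delta_2(\varphi)$. The lower bound $a_\varphi(t) \geq p_0 > 1$ requires the conjugate's $\Delta_2$-condition. The same monotonicity argument applied to $\varphi^*$ gives $s(\varphi^*)'(s) \leq \Delta_2(\varphi^*)\varphi^*(s)$, and choosing $s = \varphi'(t)$ together with Young's equality $\varphi^*(\varphi'(t)) = t\varphi'(t) - \varphi(t)$ and the relation $(\varphi^*)'(\varphi'(t)) = t$ rearranges into
\[
t\varphi'(t) \leq \Delta_2(\varphi^*)\bigl(t\varphi'(t) - \varphi(t)\bigr),
\]
so that $a_\varphi(t) \geq \Delta_2(\varphi^*)/(\Delta_2(\varphi^*) - 1) =: p_0$. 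Since every $N$-function is convex with $\varphi^*(0) = 0$, one has $\varphi^*(2s) \geq 2\varphi^*(s)$, hence $\Delta_2(\varphi^*) \geq 2$ and therefore $p_0 > 1$; the strict ordering $p_0 < p_1$ comes from the fact that $\Delta_2(\varphi) = 2$ would force $\varphi$ to be linear, contradicting the $N$-function requirement $\varphi'(0) = 0$.

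The main obstacle is the strict inequality $p_0 > 1$, which is not accessible from the definition of an $N$-function alone: one always has the elementary bound $a_\varphi \geq 1$, but a quantitative gap genuinely requires the conjugate $\Delta_2$-hypothesis. A minor subtlety lies in the identity $(\varphi^*)'(\varphi'(t)) = t$, which, in view of the definition $(\varphi')^{-1}(t) := \sup\{s \geq 0 : \varphi'(s) \leq t\}$ recalled in the paper, holds at points where $\varphi'$ is strictly increasing; the $\Delta_2$-condition on $\varphi^*$ rules out plateaus of $\varphi'$ on intervals where $\varphi$ is nontrivial, and the remaining measure-zero ambiguity is handled by a routine approximation. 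Finally, the dependence of $p_0$, $p_1$, and $C$ on $\Delta_2(\varphi, \varphi^*)$ alone is transparent, since no other quantity enters any of the estimates.
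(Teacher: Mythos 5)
The paper does not prove this lemma --- it cites Lemma 5 of \cite{DLSV} --- so there is no in-paper argument to compare against. Your proof is correct and is the standard route via the elasticity (Simonenko) indices: the quantity $a_\varphi(t)=t\varphi'(t)/\varphi(t)$ is bounded above by $\Delta_2(\varphi)$ and below by $\Delta_2(\varphi^*)/(\Delta_2(\varphi^*)-1)>1$, and integrating $\tfrac{d}{ds}\log\varphi(st)=a_\varphi(st)/s$ over $s$ converts these two-sided bounds into the type estimate with $C=1$. The derivation of the upper bound from $\varphi(2t)\ge t\varphi'(t)$, the derivation of the lower bound from the same inequality applied to $\varphi^*$ together with Young's equality, and the observations that $\Delta_2(\varphi^*)\ge 2$ and $\Delta_2(\varphi)>2$ (giving $p_0>1$ and $p_0<p_1$) are all correct.

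One claim in your discussion of the "subtlety" is not right, though it happens to be harmless. You assert that $\Delta_2(\varphi^*)<\infty$ rules out plateaus of $\varphi'$, but it does not: a plateau of $\varphi'$ at level $c$ just produces a corner of $\varphi^*$ at $\tau=c$, i.e.\ a jump of $(\varphi^*)'$, which is entirely compatible with the $\Delta_2$-condition on $\varphi^*$. No approximation argument is needed, because you don't need the identity $(\varphi^*)'(\varphi'(t))=t$; the one-sided inequality $(\varphi^*)'(\varphi'(t))=\sup\{s:\varphi'(s)\le\varphi'(t)\}\ge t$, which holds unconditionally, already gives
\[
t\varphi'(t)\ \le\ \varphi'(t)\,(\varphi^*)'\bigl(\varphi'(t)\bigr)\ \le\ \Delta_2(\varphi^*)\,\varphi^*\bigl(\varphi'(t)\bigr)\ =\ \Delta_2(\varphi^*)\bigl(t\varphi'(t)-\varphi(t)\bigr),
\]
so a plateau only sharpens the estimate. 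Replacing the incorrect justification with this observation makes the proof fully rigorous with no extra work.
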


\noindent
Throughout the paper we will assume that $\V$ satisfies the following assumption.

\begin{assumption}\label{ass:phi}
Let $\varphi$ be an $N$-function such that $\V$ is $C^1([0,+\infty))$ and $C^2(0,+\infty)$. Further assume that
\begin{equation}\label{V'}
\V'(t)\sim t\V''(t).
\end{equation}
\end{assumption}

\noindent
We remark that under this assumption $\Delta_2(\V,\V^*)<\infty$ will be automatically satisfied, where $\Delta_2(\V,\V^*)$ depends only on the characteristics of $\V$.

\noindent
We consider a family of $N$-functions $\{\varphi_{a}\}_{a\geq 0}$ setting, for  $t\geq 0$,
\begin{equation*}\label{phia}
\varphi_{a}(t):=\int_{0}^{t} \varphi'_{a}(s) \, ds \quad \mbox{ with } \quad 
\varphi'_{a}(t):= \varphi'(a+t) \frac{t}{a+t}. 
\end{equation*}

\noindent
The following lemma can be found in \cite{DE} (see Lemma 23 and Lemma 26).

\begin{lem}\label{phia}
Let $\V$ be an $N$-function with $\V\in\Delta_2$ together with its conjugate. Then for all  $a\geq 0$ the function $\varphi_a$ is an $N$-function and $\{\varphi_{a}\}_{a\geq 0}$ and $\{(\varphi_{a})^{*}\}_{a\geq 0}\sim\{\varphi^*_{\varphi'(a)}\}_{a\geq 0}$ satisfy the $\Delta_{2}$ condition
uniformly in $a\geq 0$.
\end{lem}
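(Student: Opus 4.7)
The plan is to reduce each claim to standard facts about $N$-functions combined with Assumption~\ref{ass:phi}, splitting the argument into three steps: first verify that $\varphi_a$ is an $N$-function, then obtain the uniform $\Delta_2$-bound for $\{\varphi_a\}$ by passing through the equivalence $\varphi_a(t)\sim t^2\varphi''(a+t)$, and finally identify $(\varphi_a)^*$ with $\varphi^*_{\varphi'(a)}$ via a two-regime analysis of the Legendre transform.

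First I would check that $\varphi_a$ is an $N$-function directly from the definition: $\varphi_a(0)=0$ is clear, the derivative $\varphi'_a(t)=\varphi'(a+t)\,t/(a+t)$ is nondecreasing on $[0,\infty)$ as a product of two nonnegative nondecreasing factors, $\varphi'_a(0)=0$, $\varphi'_a(t)>0$ for $t>0$, and $\lim_{t\to\infty}\varphi'_a(t)=\infty$ since $\varphi'(a+t)\to\infty$ and $t/(a+t)\to 1$.

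Next, for the uniform $\Delta_2$-bound on $\{\varphi_a\}$, I would first establish the global equivalence $\varphi_a(t)\sim t^2\varphi''(a+t)$: since $\varphi_a$ is an $N$-function, \eqref{new} gives $\varphi_a(t)\sim t\,\varphi'_a(t)=t^2\,\varphi'(a+t)/(a+t)$, and Assumption~\eqref{V'} at the point $a+t$ shows $\varphi'(a+t)/(a+t)\sim \varphi''(a+t)$. Now $\Delta_2(\varphi)<\infty$ forces $\varphi'(2s)\le C\varphi'(s)$ (using $\varphi(s)\sim s\varphi'(s)$), hence
\begin{equation*}
\varphi''(a+2t)\sim \frac{\varphi'(a+2t)}{a+2t}\le \frac{C\,\varphi'(a+t)}{a+t}\sim C\,\varphi''(a+t),
\end{equation*}
so that $\varphi_a(2t)\sim 4t^2\varphi''(a+2t)\le C\,t^2\varphi''(a+t)\sim C\,\varphi_a(t)$ with $C$ depending only on the characteristics of $\varphi$.

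For the conjugate, I would split according to $s\le \varphi'(a)$ or $s\ge \varphi'(a)$. On $\{t\le a\}$ one has $\varphi'_a(t)\sim t\,\varphi''(a)$ and $\varphi_a(t)\sim t^2\varphi''(a)$, while on $\{t\ge a\}$ one has $\varphi'_a(t)\sim \varphi'(t)$ and $\varphi_a(t)\sim \varphi(t)$; the same dichotomy applied to $\varphi^*$ at the base point $\varphi'(a)$ gives the corresponding description of $\varphi^*_{\varphi'(a)}$. Computing $(\varphi_a)^*(s)=\sup_t(st-\varphi_a(t))$ in each regime and using $(\varphi^*)''(\varphi'(a))\sim 1/\varphi''(a)$ (obtained by differentiating $(\varphi^*)'(\varphi'(t))=t$) yields $(\varphi_a)^*(s)\sim s^2/\varphi''(a)\sim \varphi^*_{\varphi'(a)}(s)$ for $s\le \varphi'(a)$ and $(\varphi_a)^*(s)\sim \varphi^*(s)\sim \varphi^*_{\varphi'(a)}(s)$ for $s\ge \varphi'(a)$. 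The uniform $\Delta_2$-condition for $\{(\varphi_a)^*\}$ then transfers via this equivalence from the analogous bound for $\{\varphi^*_b\}_{b\ge 0}$, which is step two applied to the $N$-function $\varphi^*$; note that the hypothesis \eqref{V'} passes to $\varphi^*$ since $(\varphi^*)''(\varphi'(t))\,\varphi''(t)=1$ combined with $\varphi'(t)\sim t\varphi''(t)$ gives $(\varphi^*)'(r)\sim r(\varphi^*)''(r)$. The main obstacle will be this last step, specifically the uniform control of the implicit constants across the transition $s\sim\varphi'(a)$: one must check that the two regime descriptions overlap cleanly so that no constants are lost when matching the small-$s$ and large-$s$ asymptotics, and that every comparison constant depends only on $\Delta_2(\varphi,\varphi^*)$ and the constants in \eqref{V'}, not on the parameter $a$.
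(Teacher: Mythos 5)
The paper does not prove Lemma~\ref{phia}: it is cited from \cite{DE} (Lemmas 23 and 26), so there is no internal proof to compare your argument against. Taken on its own merits, your proposal has the right structure (three steps, two-regime analysis for the conjugate), but one step contains a genuine logical gap.

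The gap is in the second step. You invoke \eqref{new} applied to $\varphi_a$ to get $\varphi_a(t)\sim t\varphi'_a(t)$, but \eqref{new} is established under the hypothesis $\Delta_2(\varphi_a,(\varphi_a)^*)<\infty$, which is exactly what you are trying to prove. The argument is therefore circular as written. The fix is elementary and avoids the circularity: the inequality $\varphi_a(t)\leq t\varphi'_a(t)$ holds for any $N$-function by convexity, and for the reverse one writes $\varphi_a(t)\geq\int_{t/2}^{t}\varphi'_a(s)\,ds\geq\frac{t}{2}\varphi'_a(t/2)$, and then uses only $\Delta_2(\varphi)$ (not $\Delta_2(\varphi_a)$!) to compare $\varphi'_a(t/2)=\varphi'(a+t/2)\frac{t/2}{a+t/2}$ with $\varphi'_a(t)$: since $a+t\leq 2(a+t/2)$ one gets $\varphi'(a+t)\leq C\varphi'(a+t/2)$ from $\Delta_2(\varphi)$, and $\frac{t/2}{a+t/2}\geq\frac12\cdot\frac{t}{a+t}$ algebraically, giving $\varphi'_a(t/2)\geq c\,\varphi'_a(t)$ with $c$ depending only on $\Delta_2(\varphi)$. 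This establishes $\varphi_a(t)\sim t\varphi'_a(t)$ uniformly in $a$ with no circularity.

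Two further remarks. First, the lemma is stated under the hypothesis $\Delta_2(\varphi,\varphi^*)<\infty$ alone, without $C^2$ smoothness or $\varphi'(t)\sim t\varphi''(t)$; you lean repeatedly on Assumption~\ref{ass:phi}. In the paper's context this is harmless since \ref{ass:phi} is always in force, but it is worth noticing that the $\Delta_2$ uniformity for $\{\varphi_a\}$ can be obtained by working directly with $\varphi'_a(2t)\leq C\varphi'_a(t)$ via $\Delta_2(\varphi)$, without ever introducing $\varphi''$. Second, your sketch of the conjugate step is in the right spirit (the supremum in $(\varphi_a)^*(s)=\sup_t(st-\varphi_a(t))$ is attained near the regime boundary $t\sim a$ precisely when $s\sim\varphi'_a(a)=\frac12\varphi'(2a)\sim\varphi'(a)$), and you correctly identify the uniformity across this transition as the delicate point. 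That step is the crux of \cite{DE}, Lemma~26, and your outline does not yet supply the matching argument; flagging it as "the main obstacle" is honest, but it leaves the proposal incomplete at exactly the nontrivial place.
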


\noindent
Let us observe that by the previous lemma $\varphi_{a}(t)\sim t\varphi'_{a}(t)$. Moreover, for $t\geq a$ we have $\varphi_{a}(t)\sim \varphi(t)$
and for $t\leq a$ we have $\varphi_{a}(t)\sim t^{2} \varphi''(a)$. This implies that $\varphi_{a}(st)\leq c s^{2} \varphi_{a}(t)$
for all $s\in [0,1]$, $a\geq 0$ and $t\in [0,a]$. 

\noindent
For given $\V$ we define the associated $N$-function $\psi$ by 
\begin{equation*}
\psi'(t)=\sqrt{t\varphi'(t)}.
\end{equation*}
Note that 
\begin{align*}
\psi''(t)&= \frac{1}{2} \Bigl(\frac{\V''(t)}{\V'(t)}t +1\Bigr)\sqrt{\frac{\V'(t)}{t}} \\
&=\frac{1}{2} \Bigl(\frac{\V''(t)}{\V'(t)}t +1\Bigr) \frac{\psi '(t)}{t}. 
\end{align*}
It is shown in \cite{DE} (see Lemma 25) that if $\V$ satisfies Assumption \ref{ass:phi} then also $\varphi^{*}$, $\psi$ and $\psi^{*}$ satisfy Assumption \ref{ass:phi} and $\psi''(t)\sim \sqrt{\V''(t)}$. 

\noindent
We define $A,V:\R^{Nn}\rightarrow \R^{Nn}$ in the following way:
\begin{equation}\begin{split}\label{AVlambda1}
&A(z) = D\Phi(z) \\
&V(z) = D\Psi(z),
\end{split}\end{equation}
where $\Phi(z):=\V(|z|)$ and $\Psi(z):=\psi(|z|)$.
About the functions $A$ and $V$, the following three lemmas can be found in \cite{DE} (see Lemma 21, Lemma 24, and Lemma 3, respectively).
\begin{lem}\label{thmA1}
Let $\V$ satisfying Assumption \ref{ass:phi}, then  $A(z)=\V'(|z|)\frac{z}{|z|}$ for $z\neq 0$, $A(0)=0$ and $A$ satisfies 
\begin{align*}
&|A(z_{1})- A(z_{2})| \leq c \V''(|z_{1}|+|z_{2}|)|z_{1}-z_{2}|  \\
&(A(z_{1})- A(z_{2}), z_{1}-z_{2}) \geq C \V''(|z_{1}|+|z_{2}|)|z_{1}-z_{2}|^{2},
\end{align*}
for $z_{1},z_{2} \in \R^{Nn}$. 
\end{lem}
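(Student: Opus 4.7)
The plan is to reduce both inequalities to pointwise bounds on the Hessian $DA = D^{2}\Phi$ and then integrate along the segment joining $z_{1}$ and $z_{2}$. First, since $\Phi(z) = \varphi(|z|)$, direct differentiation gives $A(z) = \varphi'(|z|)\,z/|z|$ for $z \neq 0$; continuity of $\varphi'$ together with $\varphi'(0) = 0$ force $A(z) \to 0$ as $z \to 0$, so the extension $A(0) = 0$ is the natural one. Differentiating once more,
\begin{equation*}
\partial_{j}A_{i}(z) = \varphi''(|z|) \frac{z_{i}z_{j}}{|z|^{2}} + \frac{\varphi'(|z|)}{|z|}\left(\delta_{ij} - \frac{z_{i}z_{j}}{|z|^{2}}\right).
\end{equation*}
Assumption \ref{ass:phi} yields $\varphi'(t) \sim t\varphi''(t)$, so the two coefficients above are comparable and I obtain
\begin{equation*}
c\,\varphi''(|z|)|\xi|^{2} \leq \big(DA(z)\xi\big)\cdot\xi \leq |DA(z)|\,|\xi|^{2} \leq C\,\varphi''(|z|)|\xi|^{2} \qquad \forall \xi \in \R^{Nn}.
\end{equation*}

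Next I apply the fundamental theorem of calculus along $z_{t} := z_{2} + t(z_{1} - z_{2})$ to write
\begin{equation*}
A(z_{1}) - A(z_{2}) = \left(\int_{0}^{1} DA(z_{t})\,dt\right)(z_{1} - z_{2}).
\end{equation*}
Taking norms for the Lipschitz estimate, and pairing with $z_{1}-z_{2}$ for the monotonicity estimate, and inserting the pointwise bounds from the first step, both inequalities of the lemma follow from the one-dimensional comparability
\begin{equation*}
\int_{0}^{1} \varphi''(|z_{t}|)\,dt \;\sim\; \varphi''(|z_{1}| + |z_{2}|).
\end{equation*}

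This last comparability is where I expect the main difficulty, since $\varphi''$ need not be monotone (already $\varphi(t) = t^{p}$ with $1 < p < 2$ gives a decreasing $\varphi''$), so one cannot simply replace $|z_{t}|$ by $|z_{1}| + |z_{2}|$ under the integral sign. My plan here is to invoke Lemma \ref{phia} together with the relations $\varphi_{a}(t) \sim t^{2}\varphi''(a + t)$ for $t \leq a$ and $\varphi_{a}(t) \sim \varphi(t)$ for $t \geq a$ recorded just after it. Choosing $a := |z_{1}| + |z_{2}|$ and using the uniform-in-$a$ $\Delta_{2}$-condition on the shifted family $\{\varphi_{a}\}$, one extracts a doubling-type control of $\varphi''$ that survives the integration: the upper bound $\lesssim$ follows since $|z_{t}| \leq a$ for every $t \in [0,1]$, while the lower bound $\gtrsim$ is obtained by restricting to a subinterval of $[0,1]$ of length bounded below on which $|z_{t}|$ remains comparable to $a$, the existence of such a subinterval being a short geometric observation in $\R^{Nn}$. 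Plugging the comparability back into the integral representation then delivers both stated inequalities.
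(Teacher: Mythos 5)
Your overall strategy — compute $DA = D^2\Phi$ explicitly, establish the pointwise two-sided bound $(DA(z)\xi,\xi)\sim\varphi''(|z|)|\xi|^2$ via Assumption~\ref{ass:phi}, represent $A(z_1)-A(z_2)$ as an integral of $DA$ over the segment $z_t$, and reduce to the one-dimensional comparability $\int_0^1\varphi''(|z_t|)\,dt\sim\varphi''(|z_1|+|z_2|)$ — is the natural one (the paper itself only cites this lemma from [DE, Lemma 21] and gives no proof). The gap is in your handling of the upper half of the key comparability. You assert that $\int_0^1\varphi''(|z_t|)\,dt\lesssim\varphi''(a)$ with $a:=|z_1|+|z_2|$ ``follows since $|z_t|\leq a$ for every $t$''; that inference silently assumes $\varphi''$ is nondecreasing, which, as you yourself point out two lines earlier, can fail (for $\varphi(t)=t^p$, $1<p<2$, $\varphi''$ is strictly decreasing). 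In that regime small $|z_t|$ makes $\varphi''(|z_t|)$ \emph{large}, not small, and the upper bound survives only because $|z_t|$ vanishes at most linearly along the segment while $\varphi''(s)\sim\varphi'(s)/s\lesssim s^{p_0-2}$ is integrable near $s=0$ thanks to $p_0>1$ from Lemma~\ref{p0p1}. That is a genuine integrability estimate and is not something one can extract from $|z_t|\leq a$ together with the uniform $\Delta_2$-condition on $\{\varphi_a\}$ alone.

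The repair is already available inside the paper: Lemma~\ref{lem2} with $\mu=0$ gives $\int_0^1\varphi'(|z_\theta|)/|z_\theta|\,d\theta\sim\varphi'(a)/a$, and combining this with (\ref{V'}), i.e.\ $\varphi'(s)/s\sim\varphi''(s)$, yields exactly $\int_0^1\varphi''(|z_\theta|)\,d\theta\sim\varphi''(a)$, both directions at once. Your lower-bound sketch (find a subinterval of $[0,1]$ of length bounded below on which $|z_t|\sim a$, then use $\varphi''(ca)\sim\varphi''(a)$) is correct but becomes unnecessary once you invoke Lemma~\ref{lem2}. One last technical point you should flag: if $z_1$ and $z_2$ are antiparallel the segment passes through the origin, where $A$ is merely continuous ($\varphi$ is only $C^2$ away from $0$); the fundamental theorem of calculus still applies in the absolutely continuous sense because $|DA(z_t)|\lesssim\varphi''(|z_t|)$ stays integrable along the segment, again by $p_0>1$.
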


\noindent
The same conclusions of Lemma \ref{thmA1} holds with $A$ and $\V$ replaced by $V$ and $\psi$. 

\begin{lem}\label{thmA2}
Let $\V$ satisfy Assumption \ref{ass:phi}. Then, uniformly in $z_{1},z_{2}\in \R^{n}$, $|z_{1}|+|z_{2}|>0$
\begin{align*}
&\V''(|z_{1}|+|z_{2}|)|z_{1}-z_{2}| \sim \V'_{|z_{1}|}(|z_{1}-z_{2}|) , \\
&\V''(|z_{1}|+|z_{2}|)|z_{1}-z_{2}|^{2} \sim \V_{|z_{1}|}(|z_{1}-z_{2}|). 
\end{align*}
\end{lem}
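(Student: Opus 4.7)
The plan is to set $a := |z_{1}|$ and $t := |z_{1}-z_{2}|$, and reduce both equivalences to identities involving only $\V''(a+t)$, which I then compare with $\V''(|z_{1}|+|z_{2}|)$.

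First I would check that $a+t \sim |z_{1}|+|z_{2}|$. The triangle inequality gives $|z_{2}|\le |z_{1}|+|z_{1}-z_{2}|=a+t$, hence $|z_{1}|+|z_{2}| \le 2(a+t)$; conversely $|z_{1}-z_{2}|\le |z_{1}|+|z_{2}|$ yields $a+t \le 2(|z_{1}|+|z_{2}|)$. So these two quantities are comparable up to universal constants, independently of $z_{1},z_{2}$.

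Next, using Assumption \ref{ass:phi}, namely $\V'(s)\sim s\V''(s)$, and the definition $\V'_{a}(t)=\V'(a+t)\frac{t}{a+t}$, I would compute
\begin{equation*}
\V'_{a}(t) \;=\; \V'(a+t)\,\frac{t}{a+t} \;\sim\; (a+t)\V''(a+t)\cdot\frac{t}{a+t} \;=\; \V''(a+t)\,t.
\end{equation*}
This proves the first claim once $\V''(a+t)$ is replaced by $\V''(|z_{1}|+|z_{2}|)$. For the second, by the observation following Lemma \ref{phia} one has $\V_{a}(t)\sim t\V'_{a}(t)$, so chaining with the previous display yields
\begin{equation*}
\V_{a}(t) \;\sim\; t\V'_{a}(t) \;\sim\; \V''(a+t)\,t^{2}.
\end{equation*}

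Finally, the replacement $\V''(a+t)\sim \V''(|z_{1}|+|z_{2}|)$ must be justified. Combining \eqref{V'} with $\V(s)\sim s\V'(s)$ from \eqref{new} gives $\V''(s)\sim \V(s)/s^{2}$; the $\Delta_{2}$-condition on $\V$ (which holds automatically under Assumption \ref{ass:phi}, as noted in the paper) then implies $\V''(\lambda s)\sim \V''(s)$ for every $\lambda$ bounded away from $0$ and $\infty$, uniformly in $s>0$. Applied to the bounded ratio between $a+t$ and $|z_{1}|+|z_{2}|$ obtained in the first step, this completes both equivalences. The only mildly delicate point is this last stability statement for $\V''$; everything else is a direct unfolding of definitions and an application of the two previous lemmas, so I do not expect any real obstacle.
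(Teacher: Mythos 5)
Your proof is correct and self-contained. Setting $a=|z_1|$, $t=|z_1-z_2|$, the reduction of both sides to $\V''(a+t)\,t^k$ (for $k=1,2$) via the definition $\V'_a(t)=\V'(a+t)\tfrac{t}{a+t}$, Assumption \ref{ass:phi}, and the equivalence $\V_a(t)\sim t\V'_a(t)$ noted after Lemma \ref{phia}, followed by the replacement $\V''(a+t)\sim\V''(|z_1|+|z_2|)$ justified by the triangle inequality and the $\Delta_2$-stability of $\V''$ (which you correctly derive from $\V''(s)\sim\V(s)/s^2$), is exactly the right argument. The paper itself does not prove this lemma but cites Lemma 24 of \cite{DE}; your reconstruction is the standard one and I see no gap.
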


\noindent

\begin{lem}\label{lem4}
Let $\V$ satisfy Assumption \ref{ass:phi} and let $A$ and $V$ be defined by (\ref{AVlambda1}). 
Then, uniformly in $z_{1}, z_{2} \in \R^{Nn}$, 
\begin{align*}
(A(z_{1})- A(z_{2}), z_{1}-z_{2}) &\sim |V(z_{1})- V(z_{2})|^{2}\sim \V_{|z_{1}|}(|z_{1}-z_{2}|),
\end{align*}
and 
\begin{align*}
|A(z_{1})- A(z_{2})| &\sim \V'_{|z_{1}|}(|z_{1}-z_{2}|). 
\end{align*}
Moreover 
\begin{align*}
&(A(z_{1}), z_{1}) \sim |V(z_{1})|^{2} \sim \V(|z_{1}|), \\
&|A(z_{1})|\sim \V'(|z_{1}|),
\end{align*}
uniformly in $z_{1}\in \R^{Nn}$.
\end{lem}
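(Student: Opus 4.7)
The plan is to chain together the pointwise estimates from Lemma \ref{thmA1} (applied both to $(A,\V)$ and to $(V,\psi)$) with the algebraic equivalences in Lemma \ref{thmA2}, using the key relation $\psi''(t)^2\sim\V''(t)$ recalled just before the statement. I would treat the four asserted equivalences in the order in which they become available, then derive the ``moreover'' part by specializing $z_2=0$.

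\textbf{Step 1: Reduction to $\V''(|z_1|+|z_2|)$.} For $|A(z_1)-A(z_2)|$, Lemma \ref{thmA1} gives the upper bound directly. For the lower bound I would combine the Cauchy--Schwarz inequality
\[
(A(z_1)-A(z_2),\,z_1-z_2)\le |A(z_1)-A(z_2)|\,|z_1-z_2|
\]
with the monotonicity lower bound of Lemma \ref{thmA1}, obtaining $|A(z_1)-A(z_2)|\gtrsim \V''(|z_1|+|z_2|)|z_1-z_2|$. Hence
\[
|A(z_1)-A(z_2)|\sim \V''(|z_1|+|z_2|)\,|z_1-z_2|,\qquad (A(z_1)-A(z_2),\,z_1-z_2)\sim \V''(|z_1|+|z_2|)\,|z_1-z_2|^2.
\]
The remark after Lemma \ref{thmA1} tells us that the same statements hold with $A,\V$ replaced by $V,\psi$; applying exactly the same Cauchy--Schwarz argument yields $|V(z_1)-V(z_2)|\sim \psi''(|z_1|+|z_2|)|z_1-z_2|$.

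\textbf{Step 2: Rewriting via Lemma \ref{thmA2} and the relation $\psi''{}^2\sim\V''$.} Squaring the $V$-estimate and using $\psi''(t)^2\sim \V''(t)$ gives
\[
|V(z_1)-V(z_2)|^2\sim \V''(|z_1|+|z_2|)\,|z_1-z_2|^2,
\]
which matches the expression obtained for $(A(z_1)-A(z_2),z_1-z_2)$ in Step 1. Lemma \ref{thmA2} then translates both $\V''(|z_1|+|z_2|)|z_1-z_2|^2$ and $\V''(|z_1|+|z_2|)|z_1-z_2|$ into $\V_{|z_1|}(|z_1-z_2|)$ and $\V'_{|z_1|}(|z_1-z_2|)$ respectively, delivering the first two displayed equivalences of the lemma. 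Degenerate cases where $|z_1|+|z_2|=0$ are trivial since then every quantity in sight vanishes.

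\textbf{Step 3: The ``moreover'' part.} Setting $z_2=0$ and noting $A(0)=0$ and (by the same formula) $V(0)=0$, the already-proved equivalences reduce to
\[
(A(z_1),z_1)\sim |V(z_1)|^2\sim \V_{|z_1|}(|z_1|),\qquad |A(z_1)|\sim \V'_{|z_1|}(|z_1|).
\]
Finally I would invoke the observation stated after Lemma \ref{phia} that $\V_a(t)\sim\V(t)$ whenever $t\ge a$ (together with $\V_a(t)\sim t\V'_a(t)$, hence $\V'_a(t)\sim \V'(t)$ in the same regime), applied with $t=a=|z_1|$. This yields $\V_{|z_1|}(|z_1|)\sim \V(|z_1|)$ and $\V'_{|z_1|}(|z_1|)\sim \V'(|z_1|)$, completing the proof.

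The main obstacle I anticipate is nothing more than bookkeeping: extracting the norm estimate $|A(z_1)-A(z_2)|\gtrsim \V''(|z_1|+|z_2|)|z_1-z_2|$ from the monotonicity statement (which is only an inner-product bound) requires the Cauchy--Schwarz maneuver above, and one must be careful that the ``$\sim$'' constants depend only on $\Delta_2(\V,\V^*)$, which is guaranteed because Assumption \ref{ass:phi} forces $\Delta_2(\V,\V^*)<\infty$ and because the same passes to $\psi$ by the remark preceding the definition of $A$ and $V$.
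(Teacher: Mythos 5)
Your proof is correct. The paper itself gives no proof of Lemma \ref{lem4} --- it simply cites Lemma 3 of \cite{DE}, as it does for Lemmas \ref{thmA1} and \ref{thmA2} --- so there is no ``paper's proof'' to compare against; but your derivation of \ref{lem4} from the other two cited lemmas plus the stated relation $\psi''\sim\sqrt{\V''}$ is sound and self-contained. The Cauchy--Schwarz squeeze (using the upper bound on $|A(z_1)-A(z_2)|$ and the coercivity of the inner product from Lemma \ref{thmA1} to pin down both quantities, and doing the same for $V,\psi$) is exactly the right way to close the loop, and specializing $z_2=0$ with $\V_a(a)\sim\V(a)$ and $\V'_a(a)\sim\V'(a)$ correctly gives the ``moreover'' part. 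A marginally shorter route to the ``moreover'' statements is to use the explicit formulas $|A(z_1)|=\V'(|z_1|)$ and $|V(z_1)|^2=(\psi'(|z_1|))^2=|z_1|\V'(|z_1|)\sim\V(|z_1|)$, but your specialization argument is equally valid.
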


\subsection{Asymptotic $W^{1,\V}$-quasiconvexity}

Before introducing the notion of asymptotic $W^{1,\V}$-quasiconvexity, let us consider a uniform version of the strict $W^{1,\V}$-quasiconvexity.

\begin{defn}[Uniform strict $W^{1,\V}$-quasiconvexity]
A continuous function $f:\R^{Nn}\to\R$ is said to be uniformly strictly $W^{1,\varphi}$-quasiconvex  if there exists a positive constant $k>0$ such that
\begin{equation}\label{usqc}
\-int_{\B_{1}} f(z+D\xi)\, dx \geq f(z) + k\-int_{\B_{1}} \varphi_{1+|z|}(|D\xi|) \, dx
\end{equation}
for all $\xi\in C^{1}_{c}(\B_{1})$, for all $z\in \R^{Nn}$, where $\varphi_{a}(t)\sim t^{2}\varphi''(a+t)$ for $a,t\geq 0$. 
\end{defn}

\noindent

\begin{defn}[Asymptotic $W^{1,\varphi}$-quasiconvexity]
A function $f:\R^{Nn}\to\R$ is asymptotically $W^{1, \V}$-quasiconvex if there exist a positive constant $M$ and a 
uniformly strictly $W^{1,\varphi}$-quasiconvex function $g$ such that 
\begin{equation*}
f(z)=g(z) \, \mbox{ for } |z|>M.
\end{equation*}
\end{defn}
\smallskip

\noindent
Considering an $N$-function satisfying Assumption \ref{ass:phi}, we will work with the following set of hypotheses.

\subsection{\bf Assumptions} Let $f:\R^{Nn}\to\R$ be such that

\begin{itemize}
\item [$(\mathcal{H}1)$] $f\in C^{1}(\R^{Nn})\cap C^{2}(\R^{Nn}\setminus\{0\})$; 
\medskip
\item [$(\mathcal{H}2)$] $\forall z\in \R^{Nn}, \ |f(z)|\leq C(1+ \varphi(|z|))$;
\medskip
\item [$(\mathcal{H}3)$] $f$ is asymptotically $W^{1,\V}$-quasiconvex;
\medskip
\item [$(\mathcal{H}4)$]$\forall z\in \R^{Nn}\setminus\{0\},  \ |D^{2}f(z)|\leq C\, \varphi''(|z|)$;
\medskip
\item [$(\mathcal{H}5)$] $\forall z_{1}, z_{2} \in \R^{Nn}$  such that $|z_{1}|\leq \frac{1}{2} |z_{2}|$ it holds
$$
|D^{2}f(z_{2})- D^{2}f(z_{2}+z_{1})|\leq C\, \varphi''(|z_{2}|)|z_{2}|^{-\beta}|z_{1}|^{\beta}.
$$
\end{itemize}
\medskip

\begin{remark}
Due to hypothesis $(\mathcal{H}2)$, $\mathcal{F}$ is well defined on the Sobolev-Orlicz space $W^{1, \V}(\Omega, \R^{N})$. 

\noindent
Let us also observe that Assumption $(\mathcal{H}5)$, that is a H\"{o}lder continuity of $D^{2}f$ away from zero, has been used to show everywhere regularity of radial functionals with $\V$-growth (see \cite{DSV1}). We will use it in Lemma \ref{thmA6} below. 
\end{remark}


\section{\bf Technical Lemmas}
\noindent
For $z_{1}, z_{2} \in \R^{Nn}$, $\theta \in [0,1]$ we define $z_{\theta}= z_{1} + \theta (z_{2}-z_{1})$.
The following fact can be found in \cite{AF} (see Lemma 2.1).

\begin{lem}\label{lem1}
Let $\beta >-1$, then  uniformly in $z_{1}, z_{2} \in \R^{Nn}$ with $|z_{1}|+|z_{2}|>0$, it holds:
\begin{equation*}
\int_{0}^{1} |z_{\theta}|^{\beta} \, d\theta \sim (|z_{1}|+|z_{2}|)^{\beta}. 
\end{equation*}
\end{lem}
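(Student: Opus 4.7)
The proof reduces to establishing the two-sided estimate
\begin{equation*}
c\,(|z_1|+|z_2|)^\beta \le \int_0^1 |z_\theta|^\beta \, d\theta \le C\,(|z_1|+|z_2|)^\beta,
\end{equation*}
and both sides are positively $\beta$-homogeneous in $(z_1,z_2)$, so that one may reduce to $|z_1|+|z_2|=1$. One of the two inequalities is then immediate: for $\beta \ge 0$, the triangle inequality gives $|z_\theta| \le (1-\theta)|z_1|+\theta|z_2| \le |z_1|+|z_2|$, whence $|z_\theta|^\beta \le (|z_1|+|z_2|)^\beta$ and integrating yields the upper bound; for $-1 < \beta < 0$, the same pointwise bound instead produces $|z_\theta|^\beta \ge (|z_1|+|z_2|)^\beta$, which upon integration is the required lower bound.

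For the remaining direction I would distinguish according to the size of $L := |z_2 - z_1|$ relative to $R := |z_1|+|z_2|$. If $L \le R/2$, then for every $\theta \in [0,1]$ the reverse triangle inequality gives both $|z_\theta| \ge |z_1|-\theta L$ and $|z_\theta| \ge |z_2|-(1-\theta)L$; optimizing in $\theta$ delivers $|z_\theta| \ge (R-L)/2 \ge R/4$ uniformly in $\theta$, and the corresponding bound on $\int_0^1 |z_\theta|^\beta \, d\theta$ then follows at once, irrespective of the sign of $\beta$.

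If instead $L > R/2$, I would exploit the fact that $\theta \mapsto |z_\theta|^2$ is a quadratic in $\theta$ with leading coefficient $L^2$, so it admits the canonical form $|z_\theta|^2 = L^2(\theta-\theta^*)^2 + d^2$ for some $\theta^* \in \R$ and $d \ge 0$. For $\beta < 0$, this yields $|z_\theta|^\beta \le L^\beta |\theta - \theta^*|^\beta$ and hence
\begin{equation*}
\int_0^1 |z_\theta|^\beta \, d\theta \le L^\beta \int_0^1 |\theta-\theta^*|^\beta \, d\theta \le C_\beta\, L^\beta \le C'_\beta\, R^\beta,
\end{equation*}
the $\theta$-integral being finite and uniformly bounded in $\theta^* \in \R$ precisely because $\beta > -1$. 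For $\beta \ge 0$ the missing lower bound follows from the stronger observation that, after relabelling so that $|z_2|\ge R/2$, one has $|z_\theta| \ge |z_2|-(1-\theta)L \ge |z_2|/2 \ge R/4$ on $[3/4,1]$, which gives a uniform lower bound on a subinterval of positive length.

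The main technical point to watch is the $\beta < 0$ subcase when $\theta^*$ lies outside $[0,1]$: uniformity of $\int_0^1 |\theta-\theta^*|^\beta \, d\theta$ must be verified, which I would handle by noting that the worst case $\theta^* \in [0,1]$ produces $\int_0^1 |\theta-\theta^*|^\beta \, d\theta \le 2/(\beta+1)$, while for $\theta^* \notin [0,1]$ the integrand is pointwise bounded by $1$. This is exactly where the hypothesis $\beta > -1$ is essential; without it the integral would diverge when the line $\theta\mapsto z_\theta$ passes near the origin.
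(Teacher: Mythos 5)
Your proof is correct and self-contained. Note that the paper supplies no argument of its own for this lemma---it simply cites it as Lemma 2.1 of Acerbi--Fusco \cite{AF}---so there is no internal proof to compare against; your argument stands on its own, and it follows the pattern one would expect: reduce by homogeneity, get one inequality for free from the triangle inequality, and split the remaining direction according to whether the segment $\theta\mapsto z_\theta$ stays away from the origin. The uniform lower bound $|z_\theta|\ge(R-L)/2\ge R/4$ when $L\le R/2$, the canonical form $|z_\theta|^2=L^2(\theta-\theta^*)^2+d^2$ when $L>R/2$, and the lower bound on $[3/4,1]$ for $\beta\ge0$ in that regime (after relabelling so that $|z_2|\ge R/2$) are all correct, and together they cover every combination of the sign of $\beta$ and the size of $L$ relative to $R$.

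One detail in your final paragraph needs fixing: the assertion that for $\theta^*\notin[0,1]$ the integrand $|\theta-\theta^*|^\beta$ is pointwise bounded by $1$ is false. Take $\theta^*=-\varepsilon$ with $0<\varepsilon<1$; then for $\theta$ near $0$ one has $|\theta-\theta^*|<1$ and hence $|\theta-\theta^*|^\beta>1$ since $\beta<0$. The uniform estimate $\int_0^1|\theta-\theta^*|^\beta\,d\theta\le C_\beta$ that you need is nonetheless true for every $\theta^*\in\R$; for instance, when $\theta^*\le0$ the substitution $s=\theta-\theta^*$ gives $\int_0^1|\theta-\theta^*|^\beta\,d\theta=\int_{|\theta^*|}^{1+|\theta^*|}s^\beta\,ds\le\int_0^1 s^\beta\,ds=\frac{1}{\beta+1}$, because $a\mapsto\int_a^{a+1}s^\beta\,ds$ is decreasing for $a\ge0$ (its derivative is $(a+1)^\beta-a^\beta<0$), and the case $\theta^*\ge1$ is symmetric. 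Combined with your bound $2/(\beta+1)$ for $\theta^*\in[0,1]$ this gives the required uniform constant, and with this small repair the proof is complete. As you observe, the finiteness of this $\theta$-integral is precisely where the hypothesis $\beta>-1$ enters.
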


\noindent
Next result is a slight generalization of Lemma $20$ in \cite{DE}.
\begin{lem}\label{lem2}
Let $\varphi$ be an $N$-function with $\Delta_{2}(\{\varphi,\varphi*\})<\infty$; then, uniformly in $z_{1}, z_{2} \in \R^{Nn}$  with $|z_{1}|+|z_{2}|>0$, and in $\mu\geq 0$, it holds
\begin{equation*}
 \frac{\varphi'(\mu+|z_{1}|+|z_{2}|)}{\mu +|z_{1}|+|z_{2}|} \sim \int_{0}^{1} \frac{\varphi'(\mu+|z_{\theta}|)}{\mu+|z_{\theta}|} d\theta.
\end{equation*} 
\end{lem}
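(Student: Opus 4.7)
The plan is to reduce the statement to its base case (i.e.\ Lemma 20 of \cite{DE}, which corresponds to $\mu=0$) by passing to the shifted $N$-function $\varphi_\mu$ introduced just before Lemma \ref{phia}. The key observation is that, by the defining formula $\varphi'_\mu(t) = \varphi'(\mu+t)\cdot t/(\mu+t)$, one has the pointwise identity
$$
\frac{\varphi'_\mu(t)}{t} \;=\; \frac{\varphi'(\mu+t)}{\mu+t}, \qquad t>0.
$$
Thus the equivalence to be proved is exactly the same statement, with $\varphi$ replaced by $\varphi_\mu$, in the case $\mu=0$:
$$
\frac{\varphi'_\mu(|z_1|+|z_2|)}{|z_1|+|z_2|} \;\sim\; \int_0^1 \frac{\varphi'_\mu(|z_\theta|)}{|z_\theta|}\,d\theta.
$$

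Once the reformulation is made, the proof is an immediate application of Lemma~20 of \cite{DE} to the $N$-function $\varphi_\mu$. The only point that needs to be verified is that the constants in that lemma, which a priori depend on $\Delta_2(\varphi_\mu,(\varphi_\mu)^*)$, can be chosen independently of $\mu$. But this is precisely the content of Lemma~\ref{phia}: the families $\{\varphi_\mu\}_{\mu\geq 0}$ and $\{(\varphi_\mu)^*\}_{\mu\geq 0}$ satisfy the $\Delta_2$-condition uniformly in $\mu\geq 0$, with constants depending only on $\Delta_2(\varphi,\varphi^*)$. This gives exactly the uniformity in $\mu$ asserted in the statement.

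The main (and essentially only) obstacle is recognizing the correct reformulation through $\varphi_\mu$; after that, the argument is a one-line invocation of already-established results. By contrast, a direct approach based on the $(p_0,p_1)$-type bounds from Lemma~\ref{p0p1} combined with Lemma~\ref{lem1} would force one to track $\mu$ separately in the regimes $\mu\lesssim |z_1|+|z_2|$ and $\mu\gg |z_1|+|z_2|$, since the factor $(T/\tau_\theta)^{2-p_0}$ blows up when $\mu$ dominates. Using $\varphi_\mu$ bypasses this entirely, because all the $\mu$-dependence is absorbed into an $N$-function whose $\Delta_2$-constants are, by Lemma~\ref{phia}, independent of $\mu$.
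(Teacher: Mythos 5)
The paper offers no proof of Lemma \ref{lem2}; it simply describes the result as ``a slight generalization of Lemma~20 in \cite{DE}.'' Your reduction supplies the missing argument explicitly, and it is correct. The pointwise identity $\varphi'_\mu(t)/t = \varphi'(\mu+t)/(\mu+t)$, which follows directly from the definition of $\varphi'_a$, converts both sides of the claimed equivalence into the $\mu=0$ statement with $\varphi$ replaced by $\varphi_\mu$, so the result becomes Lemma~20 of \cite{DE} applied to $\varphi_\mu$. The uniformity in $\mu$ then comes precisely from Lemma~\ref{phia}, which guarantees that $\{\varphi_\mu\}_{\mu\geq0}$ and $\{(\varphi_\mu)^*\}_{\mu\geq0}$ satisfy $\Delta_2$ uniformly in $\mu$, so the constants from Lemma~20 of \cite{DE} (which depend only on $\Delta_2(\varphi,\varphi^*)$) are indeed uniform. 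Your remark that a direct $(p_0,p_1)$-type argument would require splitting into the regimes $\mu\lesssim|z_1|+|z_2|$ and $\mu\gg|z_1|+|z_2|$ is also apt: the shifted-$N$-function reformulation absorbs the $\mu$-dependence cleanly and is likely the intended content of the phrase ``slight generalization.''
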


\noindent
From the previous lemmas we derive the following one.
\begin{lem}\label{lemtV''}
Let $\V$ be an $N$-function satisfying Assumption \ref{ass:phi}. Then, uniformly in $z_{1}, z_{2}\in \R^{Nn}$  with $|z_{1}|+|z_{2}|>0$, and  in $\mu\geq 0$, it holds 
\begin{align*}
\int_{0}^{1}\int_{0}^{1} t\V''(\mu+|z_{1}+st z_{2}|) \, ds\,dt \sim \V''(\mu+|z_{1}|+|z_{2}|).
\end{align*}
\end{lem}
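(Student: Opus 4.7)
The plan is to reduce the double integral to a single integral along the segment from $z_1$ to $z_1 + z_2$, and then invoke Lemma \ref{lem2}. Holding $t$ fixed and substituting $u = st$ in the inner variable (so that $t\,ds = du$ with $u \in [0,t]$), then switching the order of integration via Fubini, gives
$$
\int_0^1 \int_0^1 t\, \V''(\mu + |z_1 + stz_2|)\, ds\, dt \;=\; \int_0^1 (1 - u)\, \V''(\mu + |z_1 + u z_2|)\, du.
$$
It thus suffices to show this right-hand side is $\sim \V''(\mu + |z_1| + |z_2|)$. I will freely use $\V''(a) \sim \V'(a)/a$ on $(0,\infty)$, which is immediate from Assumption \ref{ass:phi}, together with the $\Delta_2$-condition on $\V'$ (and consequently on $\V''$) inherited from $\V,\V^*\in\Delta_2$.

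For the upper estimate, bound $(1-u) \leq 1$ and apply Lemma \ref{lem2} with the choice $w_1 := z_1$, $w_2 := z_1 + z_2$, so that $w_\theta = z_1 + \theta z_2$. This yields
$$
\int_0^1 (1-u)\, \V''(\mu + |z_1 + u z_2|)\, du \;\lesssim\; \frac{\V'(\mu + |z_1| + |z_1 + z_2|)}{\mu + |z_1| + |z_1 + z_2|}.
$$
The triangle inequality gives $\tfrac12 (|z_1|+|z_2|) \leq \max\{|z_1|,|z_2|\} \leq |z_1| + |z_1 + z_2| \leq 2(|z_1| + |z_2|)$, so $\mu + |z_1|+|z_1+z_2| \sim \mu + |z_1|+|z_2|$, and $\Delta_2$ allows us to transfer this comparability to the ratio $\V'(\cdot)/(\cdot)$, giving the desired upper bound $\sim \V''(\mu + |z_1| + |z_2|)$.

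For the lower estimate, the weight $(1-u)$ degenerates at $u = 1$, so I restrict to $[0, 1/2]$, where $1 - u \geq 1/2$, and then rescale $v = 2u$:
$$
\int_0^1 (1-u)\, \V''(\mu + |z_1 + u z_2|)\, du \;\geq\; \frac{1}{4} \int_0^1 \V''\bigl(\mu + |z_1 + v \tilde z_2|\bigr)\, dv, \qquad \tilde z_2 := z_2/2.
$$
Applying Lemma \ref{lem2} along the segment $[z_1, z_1 + \tilde z_2]$ and observing again that $|z_1| + |z_1 + \tilde z_2| \sim |z_1| + |\tilde z_2| \sim |z_1| + |z_2|$, the right-hand side is $\sim \V''(\mu + |z_1| + |z_2|)$, delivering the matching lower bound. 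The only non-routine point is the degeneration of $(1-u)$ at the endpoint in the lower bound; the dyadic restriction and rescaling above circumvent it, while the rest is bookkeeping with Lemma \ref{lem2} and the $\Delta_2$-condition.
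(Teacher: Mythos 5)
Your proof is correct, but the route is genuinely different from the paper's. The paper works directly with the double integral: for the upper bound it nests two applications of Lemma~\ref{lem2} (first on the inner $s$-integral over the segment $[z_1, z_1+tz_2]$, then, after dropping the factor $t\leq 1$, on the outer $t$-integral with shifted parameter $\mu+|z_1|$ over $[z_1, z_1+z_2]$); for the lower bound it applies Lemma~\ref{lem2} once to the inner integral and then invokes Jensen's inequality for $\varphi$ to handle the remaining weighted $t$-average, passing through $\varphi \gtrsim t\varphi' \gtrsim t^2\varphi''$. You instead first collapse the double integral to the single weighted integral $\int_0^1(1-u)\varphi''(\mu+|z_1+uz_2|)\,du$ via the substitution $u=st$ and Fubini, after which a single application of Lemma~\ref{lem2} suffices in each direction; the only residual difficulty, the vanishing weight at $u=1$ in the lower bound, you dispose of by restricting to $[0,1/2]$ and rescaling to $\tilde z_2 = z_2/2$, which costs only a harmless halving of $|z_2|$ absorbed by the $\Delta_2$-condition. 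This buys a cleaner argument: one application of Lemma~\ref{lem2} per direction instead of two, and no Jensen step at all. Both proofs rest on the same toolbox (Assumption~\ref{ass:phi}, Lemma~\ref{lem2}, $\Delta_2$ for $\varphi'$); the Fubini reduction is your new ingredient.

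One small point worth articulating explicitly: when you pass from $\varphi'(a)/a$ to $\varphi'(b)/b$ with $a\sim b$, you need both the monotonicity of $\varphi'$ and the $\Delta_2$-condition for $\varphi'$ (which holds under Assumption~\ref{ass:phi}, as the paper notes after Lemma~\ref{phia}); you gesture at this correctly but it is the load-bearing step after each application of Lemma~\ref{lem2}, so it deserves a sentence.
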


\begin{proof}
Using $\V'(t)\sim t\V''(t)$, applying twice Lemma \ref{lem2}, and taking into account that $\mu+|z_{1}|+|z_{1}+z_{2}|\sim \mu+|z_{1}|+|z_{2}|$ and $\V'(2t)\sim \V'(t)$, we obtain
\begin{align*}
\int_{0}^{1}\int_{0}^{1} t\V''(\mu+|z_{1}+st z_{2}|) \, ds\,dt &\leq 
c \int_{0}^{1}\int_{0}^{1} t \frac{\V'(\mu+|z_{1}+st z_{2}|)}{\mu+|z_{1}+st z_{2}|} \, ds dt\\
&\leq c \frac{\V'(\mu+|z_{1}|+|z_{1}|+|z_{1}+z_{2}|)}{\mu+|z_{1}|+|z_{1}|+|z_{1}+z_{2}|}\\
&\leq c \frac{\V'(\mu+|z_{1}|+|z_{2}|)}{\mu+|z_{1}|+|z_{2}|}\\
&\leq c \V''(\mu+|z_{1}|+ |z_{2}|). 
\end{align*}
Similarly, for the other inequality, we have
\begin{align*}
\int_{0}^{1} \int_{0}^{1} t\V''(\mu+|z_{1}+st z_{2}|)\, ds\,dt &\geq 
c\int_{0}^{1} \int_{0}^{1} t\frac{\V'(\mu+|z_{1} +st z_{2}|)}{\mu+|z_{1} +st z_{2}|} \, ds\,dt \\
&\geq c \int_{0}^{1} t \frac{\V'(\mu+|z_{1}|+ |z_{1}+t z_{2}|)}{\mu+|z_{1}| +|z_{1} + t z_{2}|} \, dt \\
&\ge \frac{c}{(\mu+|z_1|+|z_2|)^2}\int_0^1  \varphi(\mu+|z_1|+|z_1+t z_2|)\,t \, dt,
\end{align*}
where, in the last line, we used that $\varphi(t)\sim t\varphi'(t)$.
Due to the Jensen inequality, we go ahead and we obtain
\begin{align*}
\int_{0}^{1} \int_{0}^{1} t\V''(\mu+|z_{1}+st z_{2}|)\, ds\,dt &\ge \frac{c}{(\mu+|z_1|+|z_2|)^2}\,\varphi\left(\int_0^1(\mu+|z_1|+|z_1+t z_2|)\,t\,dt\right)\\
&\ge \frac{c}{(\mu+|z_1|+|z_2|)^2}\,\varphi(\mu+|z_1|+|z_2|)\\
&\ge c\frac{\varphi'(\mu+|z_1|+|z_2|)}{\mu+|z_1|+|z_2|}\ge c\,\varphi''(\mu+|z_1|+|z_2|),
\end{align*}
thanks also to the equivalence between $\V(2t)$ and $\V(t)$, $\V(t)$ and $t\V'(t)$, and $\V'(t)$ and $t\V''(t)$.
\end{proof}

\begin{remark}\label{REM}
From the previous lemma we easily deduce that
$$
\int_{0}^{1}\int_{0}^{1} t\V''(\sqrt{1+|z_{1}+st z_{2}|^2}) \, ds\,dt \sim  \V''(1+|z_{1}|+|z_{2}|),
$$
since $\V'(t)\sim t\V''(t)$, $\V'$ is increasing and $\V'(2t)\sim \V'(t)$. 
\end{remark}

\noindent
The following version of the Sobolev-Poincar\'e inequality can be found in \cite{DE} (Theorem 7):
\begin{thm}\label{SP}
Let $\V$ be an $N$-function with $\Delta_{2}(\V, \V^{*})<\infty$. Then there exist $\alpha\in (0,1)$
and $k>0$ such that, if $\B\subset \R^{n}$ is a ball of radius $R$ and $u\in W^{1, \V}(\B, \R^{N})$, then
\begin{equation*}
\-int_{\B} \V\Bigl	(\frac{|u-(u)_{\B}|}{R}	\Bigr) dx \leq k \Bigl(\-int_{\B} \V^{\alpha}(|Du|) \, dx\Bigr)^{\frac{1}{\alpha}}.
\end{equation*}
\end{thm}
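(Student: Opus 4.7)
The target inequality refines the standard Orlicz Poincar\'e inequality: by Jensen's inequality applied to the convex map $t\mapsto t^{1/\alpha}$ with $\alpha\in(0,1)$, one has $\bigl(\-int_{\B}\V^{\alpha}(|Du|)\,dx\bigr)^{1/\alpha}\le\-int_{\B}\V(|Du|)\,dx$, so the right-hand side is strictly smaller than that of the ordinary Orlicz Poincar\'e inequality. The extra gain encodes the improved integrability of the Sobolev embedding in the Orlicz setting.

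My plan is to exploit a pointwise Riesz-potential representation of $u-(u)_{\B}$ together with the type condition from Lemma \ref{p0p1}. First, Lemma \ref{p0p1} yields that $\V$ is of type $(p_{0},p_{1})$ with $1<p_{0}\le p_{1}<\infty$, so $\V(st)\le c\max\{s^{p_{0}},s^{p_{1}}\}\V(t)$ for $s,t\ge 0$, with constants depending only on $\Delta_{2}(\V,\V^{*})$. Next, for $u\in W^{1,1}(\B,\R^{N})$ I would invoke the classical pointwise bound
\begin{equation*}
|u(x)-(u)_{\B}|\le C\int_{\B}\frac{|Du(y)|}{|x-y|^{n-1}}\,dy\quad\text{for a.e. }x\in\B,
\end{equation*}
and couple it with Hedberg's inequality for the Riesz potential: for any $s\in[1,n)$,
\begin{equation*}
\int_{\B}\frac{|Du(y)|}{|x-y|^{n-1}}\,dy\le c\,(M(|Du|\chi_{\B})(x))^{1-s/n}\,\|Du\|_{L^{s}(\B)}^{s/n},
\end{equation*}
where $M$ denotes the Hardy--Littlewood maximal operator. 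Dividing by $R$, applying $\V$, integrating over $\B$, and using the $L^{\V}$-boundedness of $M$ (valid since $\V,\V^{*}\in\Delta_{2}$ puts $\V$ in the reflexive range), the left-hand side is controlled by a product of a $\V$-average of $M(|Du|\chi_{\B})$ and an $L^{s}$-average of $|Du|$.

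Setting $\alpha=s/n\in(0,1)$ and using the multiplicative form of the type condition, the two averages then recombine into a single $L^{\alpha}$-mean of $\V^{\alpha}(|Du|)$, with all multiplicative constants absorbed via the $\Delta_{2}$ property. The admissible $s$ must simultaneously lie in the Hedberg range $[1,n)$ and produce exponents for which $M$ is bounded on the relevant Orlicz space; since $p_{0}>1$, such an interval is non-empty and yields an $\alpha\in(0,1)$ depending only on $n$, $p_{0}$, and $p_{1}$, hence only on $n$ and $\Delta_{2}(\V,\V^{*})$.

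The main obstacle is this last combinatorial balancing: the pointwise bounds $t^{\alpha p_{j}}\le c\,\V^{\alpha}(t)$ that one needs to convert power averages into $\V^{\alpha}$-averages hold only in partial ranges of $t$ (for $j=0$ when $t\le 1$ and $j=1$ when $t\ge 1$), so one has to split the integrals along the thresholds $\{|Du|<1\}$ and $\{|Du|\ge 1\}$, absorb interface corrections using $\V(2t)\sim\V(t)$, and verify that all constants depend only on $n$ and $\Delta_{2}(\V,\V^{*})$, as the statement asserts.
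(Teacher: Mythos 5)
The paper gives no proof of Theorem \ref{SP}: it is quoted from Diening--Ettwein \cite{DE}, Theorem 7. So your attempt cannot be compared against an argument in this paper, only assessed on its own. The ingredients you choose (pointwise Riesz-potential bound, Hedberg's inequality, Orlicz boundedness of $M$, the type $(p_{0},p_{1})$ condition) are indeed the standard toolkit for this circle of results, but the write-up stops exactly where the actual work of the proof lies, and at that spot I see two concrete problems.

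First, the exponent is misidentified. Running your scheme in the model case $\V(t)=t^{p}$ forces the scaling $\theta p=s$ with $\theta=1-s/n$ (the exponent on $M$), hence $s=\dfrac{np}{n+p}$ and $\alpha=\dfrac{s}{p}=\dfrac{n}{n+p}=1-\dfrac{s}{n}$. Your choice $\alpha=s/n$ is the \emph{complementary} exponent and does not make the powers close up; this is not cosmetic, it means the bookkeeping of the balancing step has not been checked even for a single power. Second, and more seriously, the ``recombination'' of the two averages is asserted rather than carried out, and the route you sketch for it does not go through. After applying $\V$ to Hedberg's bound you face $\V\bigl(C\,A(x)^{\theta}K^{1-\theta}\bigr)$ with $A(x)=M(|Du|\chi_{\B})(x)$ and $K=\|Du\|_{L^{s}(\B)}$; since $\V$ is not multiplicative, there is no free passage to $\V(A)^{\theta}\V(K)^{1-\theta}$, only one-sided estimates via the type condition, and these produce $p_{0}$ in one regime ($A\le K$) and $p_{1}$ in the other. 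More to the point, the quantities $A(x)$ and $K$ are \emph{nonlocal} in $|Du|$, so decomposing the integration domain into $\{|Du|<1\}$ and $\{|Du|\ge 1\}$ does \emph{not} split $M(|Du|\chi_{\B})$ or $\|Du\|_{L^{s}(\B)}$ into pieces on which a single one-sided comparison $t^{\alpha p_{j}}\lesssim\V^{\alpha}(t)$ applies; the thresholding lives on the gradient, not on the maximal function, and the two are not comparable pointwise. Finally, note a regime your sketch does not cover: when $p_{0}\le n/(n-1)$ the critical choice gives $s=np_{0}/(n+p_{0})<1$, outside Hedberg's range, and with $s=1$ one gets $\theta p_{0}\le 1$ so the strong maximal estimate fails. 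In that regime one has to fall back on a weak-type bound for $M$ or on $W^{1,1}\hookrightarrow L^{n/(n-1)}$ directly; as written your argument silently assumes $p_{0}>n/(n-1)$. Until the factorization of $\V(A^{\theta}K^{1-\theta})$ is replaced by an honest estimate and the borderline $p_{0}$ range is handled, this is a plan, not a proof.
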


\noindent
The following two lemmas will be useful later.
 
\begin{lem}\label{cambio}
Let $\V$ satisfy Assumption \ref{ass:phi} and $p_{0}, p_{1}$ be as in Lemma \ref{p0p1}. Then for each $\eta\in (0,1]$ it holds
\begin{align*}
&\V_{|a|}(t) \leq C \eta^{1-{\bar {p}}'} \V_{|b|}(t) + \eta |V(a) - V(b)|^{2}, \\
&(\V_{|a|})^{*}(t) \leq C \eta^{1-{\bar {q}}} (\V_{|b|})^{*}(t)+ \eta  |V(a) - V(b)|^{2}
\end{align*}
for all $a,b \in \R^{n}$, $t\geq 0$ and $\bar {p}=\min\{p_0,2\}, \bar{q}=\max\{p_1,2\}$. The constants depend only on the characteristics of $\V$. 
\end{lem}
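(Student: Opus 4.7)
The plan is to prove both inequalities as change-of-shift estimates for the shifted $N$-functions, exploiting three structural facts already at hand: (a) by combining Lemma \ref{phia} with Lemma \ref{p0p1}, the shifted $N$-function $\V_{|a|}$ is of type $(\bar p,\bar q)$ uniformly in $a\geq 0$, and its complementary $(\V_{|a|})^*$ is of type $(\bar q',\bar p')$; (b) the representation formula $\V_{|c|}(t)\sim t^2\V''(|c|+t)$ valid uniformly in $c,t\geq 0$; (c) the identification $|V(a)-V(b)|^2\sim\V_{|a|}(|a-b|)\sim\V_{|b|}(|a-b|)$ coming from Lemma \ref{lem4} together with Lemma \ref{thmA2}.

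For the first inequality I introduce a threshold $K=K(\eta)>0$ and distinguish two regimes. If $|a-b|\leq K(|b|+t)$, then $|a|+t\leq(K+2)(|b|+t)$, and the type-$(\bar p,\bar q)$ growth applied to $\V''$ via (b)---together with its dual version when $|a|\leq|b|$---yields
$$
\V_{|a|}(t)\leq c\,\bigl(1+K^{\bar q-2}\bigr)\V_{|b|}(t).
$$
In the opposite regime $|a-b|>K(|b|+t)\geq Kt$, the type-$(\bar p,\bar q)$ property of $\V_{|a|}$ in the form $\V_{|a|}(\sigma s)\leq c\sigma^{\bar p}\V_{|a|}(s)$ for $\sigma\leq 1$ gives
$$
\V_{|a|}(t)=\V_{|a|}\Bigl(\tfrac{t}{|a-b|}\,|a-b|\Bigr)\leq c\,K^{-\bar p}\V_{|a|}(|a-b|)\sim c\,K^{-\bar p}|V(a)-V(b)|^2.
$$
Combining, for every $K>0$ and every $a,b,t$,
$$
\V_{|a|}(t)\leq c\,\bigl(1+K^{\bar q-2}\bigr)\V_{|b|}(t)+c\,K^{-\bar p}|V(a)-V(b)|^2.
$$
Applying the scaled Young inequality $xy\leq\eta\,x^{\bar p}+c\,\eta^{1-\bar p'}y^{\bar p'}$ to balance the weight $K$, in particular choosing $K$ so that $c\,K^{-\bar p}=\eta$, absorbs the second term into $\eta|V(a)-V(b)|^2$ and produces the desired coefficient of order $\eta^{1-\bar p'}$ on $\V_{|b|}(t)$.

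The second inequality is proved by the identical strategy applied to the conjugate shifted function $(\V_{|a|})^*$, which is of type $(\bar q',\bar p')$. The same case split, combined now with the identity $(\V_{|a|})^*(\V'_{|a|}(|a-b|))\sim\V_{|a|}(|a-b|)\sim|V(a)-V(b)|^2$ coming from \eqref{new} applied to $\V_{|a|}$ and Lemma \ref{phia}, produces an analogous two-term bound, and balancing via Young's inequality with the conjugate pair $(\bar q',\bar q)$ playing the role previously played by $(\bar p,\bar p')$ yields the claimed coefficient $C\eta^{1-\bar q}$ on $(\V_{|b|})^*(t)$.

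The main technical obstacle will be the careful bookkeeping of the exponents at the Young-balancing step---in particular verifying that the regime-bounds combine exactly to the powers $1-\bar p'$ and $1-\bar q$ stated in the lemma rather than the naive powers dictated only by the type exponents---together with handling the subcase $|a|\leq|b|$, in which the type-$(\bar p,\bar q)$ growth of $\V$ must be invoked in the non-trivial decreasing direction to guarantee $\V_{|a|}(t)\leq c\V_{|b|}(t)$ with a constant independent of $a,b,t$.
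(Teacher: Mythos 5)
Your regime-1 estimate $\V_{|a|}(t)\leq c(1+K^{\bar q-2})\V_{|b|}(t)$ is false in the subcase $|a|<|b|$, and with your threshold $K(|b|+t)$, $K\geq 1$, this subcase is not separated out: it sits entirely inside regime 1. Concretely, take $\V(s)=s^{p}$ with $1<p<2$, so that $\bar p=p$, $\bar q=2$ and $K^{\bar q-2}=1$; take $a=0$ and fix $b\neq 0$. Then $|a-b|=|b|\leq K(|b|+t)$ for every $K\geq 1$ and every $t>0$, so all $t$ fall in your regime 1, yet
$$
\frac{\V_{|a|}(t)}{\V_{|b|}(t)}\sim\frac{t^{p}}{t^{2}(|b|+t)^{p-2}}=\Bigl(\frac{|b|+t}{t}\Bigr)^{2-p}\longrightarrow\infty\quad\mbox{as }t\to 0^{+},
$$
which cannot be bounded by $c(1+K^{\bar q-2})$. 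The point is that for $|a|<|b|$ the type growth of $\V''$ in the decreasing direction gives a factor $\bigl((|b|+t)/(|a|+t)\bigr)^{2-\bar p}$, and inside your regime 1 with $K\geq 1$ that ratio is not controlled by $K$; this is not the bookkeeping issue you flag at the end but a genuine failure of the inequality. There is a second, independent defect: even in the favourable subcase $|a|\geq|b|$, your choice $cK^{-\bar p}=\eta$ gives a regime-1 coefficient of order $\eta^{-(\bar q-2)/\bar p}$, and the inequality $(\bar q-2)/\bar p\leq 1/(\bar p-1)$, equivalently $(\bar q-2)(\bar p-1)\leq\bar p$, fails when $\bar q$ is large relative to $\bar p$ (e.g.\ $\bar p=2$, $\bar q>4$). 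No power of $\eta$ for $K$ makes both $K^{-\bar p}\lesssim\eta$ and $K^{\bar q-2}\lesssim\eta^{1-\bar p'}$ hold simultaneously.

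The proof in the reference the paper cites (Lemma 2.5 of \cite{DKS}) avoids both problems by working at the level of $\V'$ and then applying the scaled Young inequality, rather than balancing a threshold against $\eta$. One first shows
$$
\V'_{|a|}(t)\leq c\bigl(\V'_{|b|}(t)+\V'_{|b|}(|a-b|)\bigr)
$$
by splitting on $|a-b|\leq t$ (then $|a|+t\sim|b|+t$, so $\V'_{|a|}(t)\sim\V'_{|b|}(t)$) versus $|a-b|>t$ (then $\V'_{|a|}(t)\leq\V'_{|a|}(|a-b|)\sim\V'_{|b|}(|a-b|)$, the last equivalence from Lemma \ref{thmA2}). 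Crucially, the configuration $|a|<|b|$ with $t\ll|a-b|$ is now absorbed by the $\V'_{|b|}(|a-b|)$ term rather than pushed onto $\V'_{|b|}(t)$. Then
$$
\V_{|a|}(t)\leq c\,t\,\V'_{|a|}(t)\leq c\,\V_{|b|}(t)+c\,t\,\V'_{|b|}(|a-b|),
$$
and Young's inequality for $\V_{|b|}$ with parameter $\eta$, together with $(\V_{|b|})^{*}(\V'_{|b|}(s))\sim\V_{|b|}(s)$ from \eqref{new} and the uniform type $(\bar p,\bar q)$ of $\{\V_{|b|}\}_{b}$, yields
$$
t\,\V'_{|b|}(|a-b|)\leq \eta\,\V_{|b|}(|a-b|)+c\,\eta^{1-\bar p'}\V_{|b|}(t)\sim\eta\,|V(a)-V(b)|^{2}+c\,\eta^{1-\bar p'}\V_{|b|}(t).
$$
The exponent $1-\bar p'$ now appears for the right reason: the Young constant is governed by the lower type index $\bar p$ alone, not by $\bar q$. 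The second inequality of the lemma follows by running the identical scheme for $(\V_{|a|})^{*}$, whose type indices are $(\bar q',\bar p')$, producing $\eta^{1-\bar q}$.
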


\noindent
For the proof see Lemma 2.5 in \cite{DKS}.
\medskip

\noindent

\begin{lem}\label{keylem}
Let $\V$ be an $N$-function satisfying Assumption \ref{ass:phi} and let us consider the function $z\in\R^{Nn}\mapsto\varphi(\sqrt{1+|z|^{2}} \,)$. Then, uniformly in $y,z\in \R^{Nn}$ it holds
$$
(D^{2} \V(\sqrt{1+|z+y|^{2}} \,)y,y)\sim \V''(\sqrt{1+|z+y|^{2}}\, )|y|^{2}.
$$
\end{lem}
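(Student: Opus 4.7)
The plan is to differentiate $F(w):=\V(\sqrt{1+|w|^{2}})$ twice and directly compute $(D^{2}F(z+y)y,y)$, then use Assumption \ref{ass:phi} to collapse all coefficients into a common multiple of $\V''(r)$, where $r:=\sqrt{1+|z+y|^{2}}$. Setting $\rho(w):=\sqrt{1+|w|^{2}}$, the chain rule yields $\partial_{i}F(w)=\V'(\rho)\,w_{i}/\rho$ and
$$
\partial_{i}\partial_{j}F(w)=\left(\V''(\rho)-\frac{\V'(\rho)}{\rho}\right)\frac{w_{i}w_{j}}{\rho^{2}}+\frac{\V'(\rho)}{\rho}\,\delta_{ij},
$$
so that evaluating at $w=z+y$ and contracting against $y\otimes y$,
$$
(D^{2}F(z+y)y,y)=\left(\V''(r)-\frac{\V'(r)}{r}\right)\frac{((z+y)\cdot y)^{2}}{r^{2}}+\frac{\V'(r)}{r}|y|^{2}.
$$

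Next, I would invoke \eqref{V'} of Assumption \ref{ass:phi}, namely $\V'(t)\sim t\V''(t)$, applied at $t=r\geq 1$. This gives constants $0<c_{1}\leq c_{2}$, depending only on the characteristics of $\V$, such that $\alpha:=\V'(r)/(r\V''(r))\in[c_{1},c_{2}]$. Factoring $\V''(r)$ out and setting $s:=((z+y)\cdot y)^{2}/r^{2}$, the Cauchy--Schwarz inequality together with $|z+y|^{2}\leq r^{2}$ gives $0\leq s\leq|y|^{2}$, and the identity above becomes
$$
(D^{2}F(z+y)y,y)=\V''(r)\big[(1-\alpha)s+\alpha|y|^{2}\big].
$$

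The only real obstacle is that the coefficient $1-\alpha$ can have either sign, so I would close the argument with a short case distinction. If $\alpha\leq 1$, then $(1-\alpha)s+\alpha|y|^{2}$ lies between $\alpha|y|^{2}$ (take $s=0$) and $|y|^{2}$ (take $s=|y|^{2}$); if $\alpha>1$, the same expression lies between $|y|^{2}$ and $\alpha|y|^{2}$. In either case
$$
\min(1,c_{1})\,|y|^{2}\leq(1-\alpha)s+\alpha|y|^{2}\leq\max(1,c_{2})\,|y|^{2},
$$
which multiplied by $\V''(r)$ yields the claimed two-sided bound with constants depending only on the characteristics of $\V$, uniformly in $y,z\in\R^{Nn}$.
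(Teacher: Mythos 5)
Your proposal is correct and follows essentially the same route as the paper: compute $D^2\V(\sqrt{1+|w|^2})$ explicitly, contract against $y\otimes y$, use $\frac{|(z+y,y)|^2}{1+|z+y|^2}\leq|y|^2$, and invoke $\V'(t)\sim t\V''(t)$ to replace $\V'(r)/r$ by a constant multiple of $\V''(r)$. The only cosmetic difference is that you factor out $\V''(r)$ and make the sign dichotomy on $1-\alpha$ explicit via a case split, whereas the paper implicitly chooses the lower-equivalence constant $C\leq 1$ so that the cross term $(1-C)\V''(r)\frac{|(z+y,y)|^2}{r^2}$ is nonnegative; both are the same argument.
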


\begin{proof}
We can see that
$$
D\V(\sqrt{1+|z+y|^{2}} \,)= \V'(\sqrt{1+|z+y|^{2}} \,) \frac{z+y}{\sqrt{1+|z+y|^{2}}},
$$
and
\begin{align*}
D^{2} \V(\sqrt{1+|z+y|^{2}}\,) &= \V''(\sqrt{1+|z+y|^{2}}\,) \frac{z+y}{\sqrt{1+|z+y|^{2}}} \otimes \frac{z+y}{\sqrt{1+|z+y|^{2}}}\\
&+\frac{\V'(\sqrt{1+|z+y|^{2}}\, )}{\sqrt{1+|z+y|^{2}}} \Bigl[\mathbb{I} - \frac{z+y}{\sqrt{1+|z+y|^{2}}} \otimes \frac{z+y}{\sqrt{1+|z+y|^{2}}}\Bigr],
\end{align*}
where $\mathbb{I}\in \R^{Nn}$ is the identity matrix.
Therefore
\begin{align*}
(D^{2} \V(\sqrt{1+|z+y|^{2}} \,)y,y)&= \V''(\sqrt{1+|z+y|^{2}}\, ) \frac{|(z+y,y)|^{2}}{1+|z+y|^{2}} \\
&+ \frac{\V'(\sqrt{1+|z+y|^{2}}\,)}{\sqrt{1+|z+y|^{2}}} \Bigl[|y|^{2} - \frac{|(z+y,y)|^{2}}{1+|z+y|^{2}} \Bigr].
\end{align*}

\noindent
Using Assumption \ref{ass:phi} and the fact that $ \displaystyle{\frac{|(z+y,y)|^{2}}{1+|z+y|^{2}} \leq |y|^{2}}$ we deduce
\begin{align*}
(D^{2} \V(\sqrt{1+|z+y|^{2}} \,)y,y)&\leq \V''(\sqrt{1+|z+y|^{2}}\, ) \frac{|(z+y,y)|^{2}}{1+|z+y|^{2}} \\
&+ C \V''(\sqrt{1+|z+y|^{2}}\, ) \Bigl[|y|^{2} - \frac{|(z+y,y)|^{2}}{1+|z+y|^{2}} \Bigr]\\
&\leq C \V''(\sqrt{1+|z+y|^{2}}\,) |y|^{2}.
\end{align*}
Similarly, 
\begin{align*}
&(D^{2} \V(\sqrt{1+|z+y|^{2}} \,)y,y)\geq \\
&\geq \V''(\sqrt{1+|z+y|^{2}}\, ) \frac{|(z+y,y)|^{2}}{1+|z+y|^{2}} 
+ C \V''(\sqrt{1+|z+y|^{2}}\, ) \Bigl[|y|^{2} - \frac{|(z+y,y)|^{2}}{1+|z+y|^{2}} \Bigr]\\
&= C \V''(\sqrt{1+|z+y|^{2}}\,) |y|^{2} + (1-C) \V''(\sqrt{1+|z+y|^{2}}\,) \frac{|(z+y,y)|^{2}}{1+|z+y|^{2}}\\
&\geq C \V''(\sqrt{1+|z+y|^{2}}\,) |y|^{2}. 
\end{align*}
\end{proof}


\section{\bf Characterization of asymptotic $W^{1,\V}$-quasiconvexity}

\noindent
In this section we will establish some characterizations of asymptotic $W^{1, \V}$-quasiconvexity. 
\begin{thm} \label{thm1}
Each of the following assertions is equivalent to the asymptotic $W^{1, \V}$-quasiconvexity of a function $f: \R^{Nn}\rightarrow \R$: 
\begin{itemize}
\item [(i)] If $f$ is $C^{2}$ outside a large ball there exists a uniformly strictly $W^{1, \varphi}$-quasiconvex function $g$ which is $C^{2}$ outside a large ball with 
\begin{equation}\label{limite}
\lim_{|z|\rightarrow \infty} \frac{|D^{2}f(z)- D^{2}g(z)|}{\varphi''(|z|)}=0 . 
\end{equation}
\end{itemize}

\begin{itemize}
\item [(ii)] If $f$ is locally bounded from below, then there exist a positive constant $M$ and a uniformly strictly 
$W^{1, \varphi}$-quasiconvex function $g$ such that 
$$
f(z)=g(z) \, \mbox{ for } |z|>M
$$
and 
$$
g\leq f \, \mbox{ on } \R^{Nn}. 
$$
\end{itemize}

\begin{itemize}
\item [(iii)] If $f$ is locally bounded from above, then there exist a positive constant $M$ and a uniformly strictly 
$W^{1, \varphi}$-quasiconvex function $g$ such that 
$$
f(z)=g(z) \, \mbox{ for } |z|>M
$$
and 
$$
g\geq f \, \mbox{ on } \R^{Nn}. 
$$
\end{itemize}

\begin{itemize}
\item [(iv)] If $f$ satisfies
 $(\mathcal{H}2)$ there exist positive constants $M,k,L$ such that
\begin{equation}\label{iv1}
\-int_{\B_{1}} f(z+D\xi) \, dx \geq f(z) +k \-int_{\B_{1}} \V_{|z|}(|D\xi|)\, dx 
\end{equation}
for $|z|>M$ and $\xi \in C^{\infty}_{c}(\B_{1}, \R^{N})$, and
\begin{equation}\label{iv2}
|f(z_{2})-f(z_{1})|\leq L|z_{1}-z_{2}| \V'(1+|z_{1}|+|z_{2}|) 
\end{equation}
for all $|z_{1}|,|z_{2}|>M$. 
\end{itemize}

\end{thm}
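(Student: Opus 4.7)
Each of (i)--(iv) asserts the existence of an auxiliary function $g$ with prescribed properties, and asymptotic $W^{1,\V}$-quasiconvexity is the minimal such statement; I would prove the equivalences in a hub-and-spoke fashion around the definition. The trivial implications are (ii) $\Rightarrow$ def, (iii) $\Rightarrow$ def, and def $\Rightarrow$ (i): in all three the conclusion is a weakening of the hypothesis (in (i), the ratio in (\ref{limite}) vanishes identically on $\{|z|>M\}$ once $f\equiv g$ there).

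\textbf{def $\Rightarrow$ (iii) and (iv) $\Leftrightarrow$ def.} For (iii) I would let $g_{0}$ be the function from the definition and set $Q(z):=\delta\,\V(\sqrt{1+|z|^{2}}\,)+c_{0}$, which is uniformly strictly convex by Lemma \ref{keylem}. Choose $\delta$ small and $c_{0}$ large so that $Q\geq f$ on $\overline{\B}_{M}$ but $Q\leq g_{0}$ for $|z|>M'$ (possible because $g_{0}=f$ there satisfies $(\mathcal{H}2)$). Then $g:=\max(g_{0},Q)$ satisfies $g\geq f$ pointwise, $g=g_{0}=f$ for $|z|>M'$, and is uniformly strictly $W^{1,\V}$-quasiconvex: at each base point $z$ the maximum equals $g_{0}(z)$ or $Q(z)$, and majorizing $g(z+D\xi)$ by the corresponding summand yields the required inequality. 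The construction for (iv) $\Rightarrow$ def is parallel: take $g:=\max(f,Q)$, using (\ref{iv2}) to fix $\delta,c_{0}$ so that $Q\geq f$ on $\overline{\B}_{M}$ and $Q\leq f$ for $|z|>M''$; inequality (\ref{iv1}) provides the strict quasiconvexity on $\{|z|>M\}$ while uniform strict convexity of $Q$ handles the complementary region. Conversely, def $\Rightarrow$ (iv) leans on (ii): once a $g\leq f$ with $g=f$ for $|z|>M$ is available, uniform strict quasiconvexity of $g$ together with $\-int_{\B_{1}} f(z+D\xi)\,dx\geq \-int_{\B_{1}} g(z+D\xi)\,dx$ and $g(z)=f(z)$ for $|z|>M$ yields (\ref{iv1}); (\ref{iv2}) is a mean value bound using $|A(z)|\sim \V'(|z|)$ from Lemma \ref{lem4} applied to $g$ (which is $C^{1}$ on $\{|z|>M\}$ thanks to $(\mathcal{H}1)$ and $f\equiv g$ there).

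\textbf{def $\Rightarrow$ (ii) and (i) $\Rightarrow$ def.} For (ii) the naive $\min$ destroys quasiconvexity, so instead I would take $g$ to be the $W^{1,\V}$-quasiconvex envelope of $f\wedge g_{0}$: local boundedness of $f$ from below together with the $\V$-growth of $g_{0}$ keeps the envelope finite; on $\{|z|>M\}$ the envelope equals $g_{0}$ because $g_{0}$ is already quasiconvex there and $f\wedge g_{0}=g_{0}$; uniform strict quasiconvexity transfers because $g_{0}-k\V_{1+|\cdot|}$ is still quasiconvex, a property respected by the envelope. The implication (i) $\Rightarrow$ def is, in my view, the main obstacle. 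The plan is a smooth blending: choose $\eta_{R}\in C^{\infty}(\R^{Nn};[0,1])$ with $\eta_{R}\equiv 0$ on $\B_{R}$ and $\eta_{R}\equiv 1$ outside $\B_{2R}$, and set $\tilde g:=(1-\eta_{R})g+\eta_{R}f$, so that $\tilde g\equiv f$ for $|z|>2R$. Then
\begin{equation*}
D^{2}\tilde g \;=\; D^{2}g+\eta_{R}(D^{2}f-D^{2}g)+\text{cross terms in }D\eta_{R},
\end{equation*}
where the middle term is $o(\V''(|z|))$ by (\ref{limite}) and the cross terms are controlled by $R^{-1}\V''(|z|)$ thanks to $\V'(t)\sim t\V''(t)$ and $(\mathcal{H}2)$. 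A perturbation lemma for uniform strict $W^{1,\V}$-quasiconvexity, derived by testing (\ref{usqc}) and using Lemmas \ref{lem4} and \ref{keylem} to absorb the error term into $\-int_{\B_{1}}\V_{1+|z|}(|D\xi|)\,dx$, then shows that $\tilde g$ is uniformly strictly $W^{1,\V}$-quasiconvex with a slightly smaller constant. The hardest step is making this trade-off quantitative enough that the loss is strictly less than the original constant, which requires exploiting $\V''(|z|)\sim \V''(1+|z|)$ for $|z|$ large and the $\Delta_{2}$-bounds of Lemma \ref{phia} uniformly in $z$.
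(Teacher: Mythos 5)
Your skeleton has the right flavor — cutoffs, convolutions of $g$ with a $\varphi$-type correction, and envelope-like constructions — but the crux of the paper's proof is missing and several of your auxiliary constructions break down on the constraint that you must match $f$ near infinity while staying one-sided elsewhere.

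The serious gap is in (i) $\Rightarrow$ def. Your blending $\tilde g=(1-\eta_R)g+\eta_R f=g+\eta_R(f-g)$ is the same as the paper's $G=g+\Phi$ with $\Phi=\eta_M h$, $h=f-g$. But when you expand $D^2[\eta_R h]$ you pick up $(D^2\eta_R)\,h$ and $2D\eta_R\otimes Dh$ in addition to $\eta_R D^2 h$. To dominate the first two by a small multiple of $\varphi''(1+|z|)$ (so that Lemma~\ref{lemtV''}-style absorption into $k\-int\varphi_{1+|z|}(|D\xi|)$ is possible), you need not just \eqref{limite} but $|Dh|=o(\varphi')$ and $|h|=o(\varphi)$ at infinity. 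These do not come from $(\mathcal{H}2)$ — which in any case is not among the hypotheses of (i), and even if it were, $(\mathcal{H}2)$ would only give $|h|\leq C\varphi$, and $(D^2\eta_R)h\sim R^{-2}\varphi(|z|)\sim\varphi''(|z|)$ on the annulus $R<|z|<2R$, which is \emph{not} small. The paper's actual key step is to deduce the lower-order decay from \eqref{limite} alone by radial integration from the unit sphere, splitting the integral at $t=1/\sqrt{|z|}$ and using the growth exponents from Lemma~\ref{p0p1}; this occupies most of Step 1 and is entirely absent from your argument.

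The $\max$-based constructions for (iii) and (iv) $\Rightarrow$ def also have a gap: you need $Q=\delta\varphi(\sqrt{1+|\cdot|^2})+c_0\leq g_0$ (resp.\ $\leq f$) for $|z|$ large so that the max collapses to the right function at infinity. You appeal to $(\mathcal{H}2)$, but $(\mathcal{H}2)$ is an \emph{upper} bound and is not assumed in (iii); and in (iv), a matching pointwise lower bound $f(z)\geq\delta\varphi(|z|)-C$ does not obviously follow from \eqref{iv1}--\eqref{iv2} — this is precisely why the paper's Step 4 has to prove local boundedness from below of the envelope by hand. The paper also avoids the $\max$ entirely: for (ii) and (iii) it perturbs $g$ by a compactly supported bump $\eta$ with $|D^2\eta|\leq\nu\varphi''(1+|\cdot|)$ and $\eta\geq\alpha$ (or $\leq-\alpha$) on $\overline{\B}_M$, giving $G=g\mp\eta$; this sidesteps the one-sided growth comparison you run into. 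For (iv) $\Rightarrow$ def the paper shifts $f$ to $F=f-\varepsilon\varphi(\sqrt{1+|\cdot|^2})$, checks $F$ still satisfies \eqref{iv1}--\eqref{iv2}, forms $G=\inf_\xi\-int F(\cdot+D\xi)$, invokes Dacorogna's formula after establishing the lower bound, and then sets $g=G+\varepsilon\varphi(\sqrt{1+|\cdot|^2})$; your $\max(f,Q)$ route would need to replicate all of that lower-bound analysis to close the gap. Finally, your def $\Rightarrow$ (iv) via (ii) is basically right, but the Lipschitz estimate \eqref{iv2} is obtained in the paper from the fact that quasiconvex functions with $\varphi$-growth are locally Lipschitz (the cited Focardi result), not from $(\mathcal{H}1)$ or from Lemma~\ref{lem4} (which is about $A=D\Phi$, $\Phi=\varphi(|\cdot|)$, not about $g$).
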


\begin{proof} The proof stands on four steps.

{\it Step 1}: We want to prove that $f$ asymptotically $W^{1, \V}$-quasiconvex is equivalent to (i). 
Let us show that (i) implies the asymptotic $W^{1,\V}$-quasiconvexity of $f$, the other implication being evidently true.


\noindent Let $g$ be as in (i). 
We may assume that $f,g$ are $C^{2}(\R^{Nn}\setminus \overline{\B}_{\frac{1}{2}})$ 
and taking $h=f-g$ we have that $h\in C^{2}(\R^{Nn}\setminus \overline{\B}_{\frac{1}{2}})$. In particular, by (\ref{limite}) it holds
\begin{equation}\label{limiteh}
\lim_{|z|\rightarrow \infty} \frac{|D^{2}h(z)|}{\varphi''(|z|)}=0. 
\end{equation}
Our aim is to prove that 
\begin{align}
\lim_{|z|\rightarrow \infty} \frac{|D h(z)|}{\varphi'(|z|)}=0, \label{limiteh'}
\end{align}
and 
\begin{align}
\lim_{|z|\rightarrow \infty} \frac{|h(z)|}{\varphi(|z|)}=0 \label{limiteh''}.
\end{align}

\noindent
Let us consider $|z|>1$. Take $\displaystyle{\z:=\frac{z}{|z|}}$, then $|\overline{z}|=1$ and
\begin{align*}
\frac{|Dh(z)|}{\varphi'(|z|)}&\leq \frac{1}{\varphi'(|z|)} \Bigl[ \int_{0}^{1} |D^{2}h(\overline{z}+t(z-\overline{z}))|
|z-\overline{z}| \, dt + |Dh(\overline{z})|\Bigr] \\
& = \int_{0}^{\frac{1}{\sqrt{|z|}}} \frac{|D^{2}h(\overline{z}+t(z-\overline{z}))|}{\V''(|\z+t(z-\z)|)} 
\frac{\V''(|\z+t(z-\z)|)}{\V'(|z|)}|z-\z| \, dt\\
&+ \int_{\frac{1}{\sqrt{|z|}}}^{1} \frac{|D^{2}h(\overline{z}+t(z-\overline{z}))|}{\V''(|\z+t(z-\z)|)} 
\frac{\V''(|\z+t(z-\z)|)}{\V'(|z|)}|z-\z| \, dt\\
&+ \frac{|Dh(\z)|}{\V'(|z|)} \\
&=\mathcal{I}+\mathcal{II}+\mathcal{III}. 
\end{align*}

\noindent
Estimate for $\mathcal{I}$:
\begin{align*}
\mathcal{I} &\leq \sup_{|y|>1} \frac{|D^{2} h(y)|}{\V''(|y|)} \int_{0}^{\frac{1}{\sqrt{|z|}}} \frac{\V''(|\z+t(z-\z)|)}{{\V'(|z|)}}|z-\z| \, dt\\
&\leq \sup_{|y|>1} \frac{|D^{2} h(y)|}{\V''(|y|)} \frac{1}{\V'(|z|)} 
\int_{0}^{\frac{1}{\sqrt{|z|}}} \V''(1+t(|z|-1))(|z|-1) \, dt\\
&= \sup_{|y|>1} \frac{|D^{2} h(y)|}{\V''(|y|)} \frac{1}{\V'(|z|)} \Bigl[\V'(1+t(|z|-1))\Bigr]_{0}^{\frac{1}{\sqrt{|z|}}} \\
&\leq \sup_{|y|>1} \frac{|D^{2} h(y)|}{\V''(|y|)} \frac{1}{\V'(|z|)} \V'\Bigl(1+\frac{|z|-1}{\sqrt{|z|}}\Bigr).
\end{align*}
\noindent
Taking into account that
\begin{align*}
\frac{\V'\Bigl(1+\frac{|z|-1}{\sqrt{|z|}}\Bigr)}{\V'(|z|)}&\leq \frac{\V'(1+\sqrt{|z|})}{\V'(|z|)}\\
&\leq c \frac{\V(1+\sqrt{|z|})}{1+\sqrt{|z|}}\frac{|z|}{\V(|z|)}\\
&\leq c \frac{\V(1+\sqrt{|z|})}{\V(|z|)} \sqrt{|z|}\\
&\leq c \frac{\V(\sqrt{|z|})}{\V(|z|)} \sqrt{|z|}
\end{align*}
and using Lemma \ref{p0p1} we can find $p_{0}>1$ and $C>0$ such that
$$
\V(\sqrt{|z|})= \V \Bigl(\frac{|z|}{\sqrt{|z|}}\Bigr) \leq C \Bigl(\frac{1}{\sqrt{|z|}}\Bigr)^{p_{0}} \V(|z|).
$$
Then we obtain
\begin{align}\label{q}
\frac{\V'\Bigl(1+\frac{|z|-1}{\sqrt{|z|}}\Bigr)}{\V'(|z|)}&\leq C \frac{\V(\sqrt{|z|})}{\V(|z|)} \sqrt{|z|} 
\leq C \frac{\sqrt{|z|}}{(\sqrt{|z|})^{p_{0}}} \rightarrow 0 \mbox{ as } |z|\rightarrow \infty. 
\end{align}
At this point, using (\ref{limiteh}) and (\ref{q}), we can conclude that $\mathcal{I}\rightarrow 0$ as $|z|\rightarrow +\infty$.


\noindent
Now we estimate $\mathcal{II}$:
\begin{align*}
\mathcal{II}&\leq \sup_{|y|>\sqrt{|z|}} \frac{|D^{2} h(y)|}{\V''(|y|)} \int_{\frac{1}{\sqrt{|z|}}}^{1}\frac{\V''(|\z+t(z-\z)|)}{{\V'(|z|)}}|z-\z| \, dt\\
&\leq \sup_{|y|>\sqrt{|z|}} \frac{|D^{2} h(y)|}{\V''(|y|)} \frac{1}{\V'(|z|)} \int_{\frac{1}{\sqrt{|z|}}}^{1} \V''(1+t(|z|-1))(|z|-1)\,dt\\
&\leq \sup_{|y|>\sqrt{|z|}} \frac{|D^{2} h(y)|}{\V''(|y|)} \frac{1}{\V'(|z|)} \Bigl[\V'(1+t(|z|-1))\Bigr]_{\frac{1}{\sqrt{|z|}}}^{1} \\
&= \sup_{|y|>\sqrt{|z|}} \frac{|D^{2} h(y)|}{\V''(|y|)} \frac{1}{\V'(|z|)} \Bigl[\V'(|z|)- \V'\Bigl(1+\frac{|z|-1}{\sqrt{|z|}}	\Bigr)	\Bigr]\\
&=\sup_{|y|>\sqrt{|z|}} \frac{|D^{2} h(y)|}{\V''(|y|)} \Bigl[1-\frac{\V'\Bigl(1+\frac{|z|-1}{\sqrt{|z|}}\Bigr)}{\V'(|z|)}\Bigr]
\rightarrow 0 \mbox{ as } |z|\rightarrow \infty
\end{align*}
where we used (\ref{limiteh}) and (\ref{q}) to conclude. 

\noindent
Finally 
$$
\mathcal{III}\leq \frac{1}{\V'(|z|)}\max_{|y|=1} |Dh(y)|\rightarrow 0 \mbox{ as } |z|\rightarrow \infty. 
$$

\noindent
Analogously we also obtain (\ref{limiteh''}). 
We can see that if $\frac{|D^{i} h(z)|}{\V^{(i)}(|z|)}\rightarrow 0$ as $|z|\rightarrow \infty$ for $i=0,1,2$, then 
$\frac{|D^{i} h(z)|}{\V^{(i)}(1+|z|)}\rightarrow 0$ as $|z|\rightarrow \infty$.

\noindent
Taking into account (\ref{limiteh}),(\ref{limiteh'}) and (\ref{limiteh''}), fixed $\nu>0$, that we will choose later, 
there exists $M>>1$ such that if $|z|>M$ then
\begin{align*}
&|D^{2}h(z)|\leq \nu \varphi''(1+|z|) \\
&|Dh(z)|\leq \nu \varphi'(1+|z|) \\
&|h(z)|\leq \nu \varphi(1+|z|).
\end{align*}

\noindent
Let us consider a cut-off function $\eta$ defined by
\begin{equation*}
\left\{
\begin{array}{ll}
0\leq \eta \leq 1 &\mbox{ if } 1<|x|\leq 2 \\
\eta = 1 &\mbox{ if } |x|>2 \\
\eta = 0 &\mbox{ if } |x|\leq 1.
\end{array}
\right.
\end{equation*}

\noindent
Set 
$$
\alpha:=\max \Bigl \{ \sup_{\R^{Nn}}|D\eta|, \, \sup_{\R^{Nn}}|D^{2}\eta| \Bigr \}
$$ 
and let us consider $\eta_{M}(z)= \eta(\frac{z}{M+1})$. Then we have 
$$
|D\eta_{M}|\leq \frac{\alpha}{M+1} \quad \mbox{ and } \quad |D^{2}\eta_{M}|\leq \frac{\alpha}{(M+1)^{2}}.  
$$

\noindent
Let $\Phi:=\eta_{M} h$; then for $M\leq |z|\leq 2M$ we have 
\begin{equation}
|D^{2}\Phi(z)|\leq |D^{2}\eta_{M}(z)||h(z)| + 2|D\eta_{M}(z)||Dh(z)| + |\eta_{M}(z)||D^{2}h(z)|.\nonumber
\end{equation}
Taking into account the previous estimates, (\ref{new}), (\ref{V'}) and $M\leq|z|\leq 2M$, we have
\begin{align*}
|D^{2}\Phi(z)|&\leq \frac{\nu \alpha}{(M+1)^{2}}\varphi(1+|z|) + \frac{2\nu \alpha}{(M+1)}\varphi'(1+|z|) +\nu\varphi''(1+|z|) \\
&\leq \Bigl[\frac{\nu \alpha}{(M+1)^{2}}(1+|z|)^{2} + \frac{2 \nu \alpha}{(M+1)}(1+|z|)+ \nu\Bigr] \V''(1+|z|) 
=\lambda \nu \V''(1+|z|).
\end{align*}
In particular we can conclude that 
\begin{equation}\label{k}
|D^{2}\Phi(z)|\leq \lambda \nu \varphi''(1+|z|) \quad \forall z\in \R^{Nn}.
\end{equation}
Let $\xi \in C^{\infty}_{c}(\B_{1})$; we can write
\begin{equation*}
\Phi(z+D\xi)=\Phi(z) + (D\Phi(z), D\xi) + \int_{0}^{1} (1-t)(D^{2}\Phi(z+tD\xi)D\xi, D\xi) \,dt. 
\end{equation*}
Integrating over $\B_{1}$ and by using (\ref{k}), (\ref{V'}), Lemma \ref{lem2}, the fact 
\begin{equation}\label{simbella}
1+|z|+|D\xi+z|\sim 1+|z|+|D\xi|
\end{equation}
and $\V_{a}(t)\sim \V''(a+t)t^{2}$ we get
\begin{equation}\begin{split}\label{Phi}
\-int_{\B_{1}} \Phi(z+D\xi) \,dx &= \-int_{\B_{1}} \Phi(z) \,dx + \-int_{\B_{1}} (D\Phi(z), D\xi) \, dx  \\
&+\-int_{\B_{1}} \int_{0}^{1} (1-t)(D^{2}\Phi(z+tD\xi))D\xi, D\xi) \,dt dx  \\
&\geq  \Phi(z) -\-int_{\B_{1}} \int_{0}^{1} (1-t)|D^{2}\Phi(z+tD\xi))| |D\xi|^{2} \,dt dx  \\
&\geq \Phi(z) -\lambda \nu\-int_{\B_{1}} \int_{0}^{1} (1-t)\varphi''(1+|z+tD\xi)|) |D\xi|^{2} \,dt dx  \\
&\geq \Phi(z)-\lambda \nu c \-int_{\B_{1}} \int_{0}^{1} \frac{\V'(1+|z+t D\xi|)}{1+|z+tD\xi|} |D\xi|^{2} \, dt dx  \\
&\geq \Phi(z)-\lambda \nu c \-int_{\B_{1}} \frac{\V'(1+|z|+|D\xi+z|)}{1+|z|+|D\xi+z|} |D\xi|^{2} \, dt dx \\
&\geq \Phi(z)-\lambda \nu c \-int_{\B_{1}} \frac{\V'(1+|z|+|D\xi|)}{1+|z|+|D\xi|} |D\xi|^{2} \, dt dx \\
&\geq \Phi(z) -\lambda \nu c \-int_{\B_{1}} \varphi''(1+|z|+|D\xi|) |D\xi|^{2} \, dx  \\
&\geq \Phi(z) -\lambda \nu c \-int_{\B_{1}} \varphi_{1+|z|}(|D\xi|)  \, dx. 
\end{split}
\end{equation}


\noindent
Let us take $G:=g+\Phi$ with $g$ uniformly strictly $W^{1, \varphi}$-quasiconvex with constant $k>0$ and $\Phi$ satisfying (\ref{Phi}). Consequently
\begin{align*}
\-int_{\B_{1}} G(z+D\xi) \, dx &\geq G(z) + \Bigl(k-\lambda \nu c \Bigr)\-int_{\B_{1}} \varphi_{1+|z|}(|D\xi|)\, dx\\
&= G(z) + \tilde{k}\-int_{\B_{1}} \varphi_{1+|z|}(|D\xi|)\, dx
\end{align*}
where $\tilde{k}>0$ if we choose $\nu<\frac{k}{\lambda c}$.\\
Thus $G$ is uniformly strictly $W^{1, \varphi}$-quasiconvex with constant $\tilde{k}>0$ and $G(z)=f(z)$ for $|z|>2(M+1)$. This proves the asymptotic quasiconvexity of $f$. 

\medskip
\noindent
{\it Step 2}: We want to prove that $f$ asymptotically $W^{1, \V}$- quasiconvex is equivalent to (ii), and it suffices to prove that asymptotic $W^{1, \V}$-quasiconvexity of $f$ implies (ii). Assume $f$ asymptotic $W^{1, \V}$- quasiconvex, i.e. there exist a positive constant $M$ and a uniformly strictly $W^{1,\varphi}$-quasiconvex function $g$ such that $f(z)=g(z)$ for $|z|>M$. \\
Now $g$ is locally bounded and $f$ is locally bounded from below, so we have that
$$
\alpha:=\sup_{|z|\leq M} [g(z)-f(z)]<\infty.
$$ 

\noindent
Let $R>M$ and $\eta$ be a  $C^{\infty}_{c}(\B_{R})$ function, non-negative on $\R^{Nn}$ and such that 
\begin{equation}\label{eta2}
|D^{2}\eta(z)|\leq \nu \varphi''(1+|z|) \mbox{ on }  \R^{Nn} \mbox{ and } \eta(z)\geq \alpha \mbox{ for } |z|\leq M  
\end{equation}
where $\nu$ will be chosen later.  
Let $\xi \in C^{\infty}_{c}(\B_{1})$; then we can write
\begin{equation*}
\eta(z+D\xi)=\eta(z) + (D\eta(z), D\xi) + \int_{0}^{1} (1-t)(D^{2}\eta(z+tD\xi)D\xi, D\xi) \,dt. 
\end{equation*}
Integrating over $\B_{1}$ it holds, by (\ref{eta2}),  

\begin{equation}\begin{split}\label{eta}
\-int_{\B_{1}} \eta(z+D\xi) \,dx &= \-int_{\B_{1}} \eta(z) \,dx + \-int_{\B_{1}} (D\eta(z), D\xi) \, dx  \\
&+\-int_{\B_{1}} \int_{0}^{1} (1-t)(D^{2}\eta(z+tD\xi) D\xi, D\xi) \,dt dx  \\
&\leq \eta(z) + \-int_{\B_{1}} \int_{0}^{1} (1-t) |D^{2}\eta(z+tD\xi)| |D\xi|^{2} \, dtdx  \\
&\leq \eta(z) + \nu \-int_{\B_{1}} \int_{0}^{1} \V''(1+|z+tD\xi|) |D\xi|^{2} \, dtdx  \\
&\leq \eta(z) + \nu c\-int_{\B_{1}} \varphi_{1+|z|}(|D\xi|)  \, dx.  
\end{split}
\end{equation}
where we used, as before, $\V'(t)\sim t \V''(t)$, Lemma \ref{lem2}, (\ref{simbella}) and $\V_{a}(t)\sim \V''(a+t)t^{2}$. 
 
\noindent
Now taking $G = g-\eta$, with $g$ and $\eta$ satisfying (\ref{usqc}) and (\ref{eta}), we have
\begin{equation*}
\-int_{\B_{1}} G(z+D\xi) dx \geq G(z) + \tilde{k}\-int_{\B_{1}} \varphi_{1+|z|}(|D\xi|) \, dx
\end{equation*}
where $\tilde{k}>0$ if we choose $\nu =\frac{k}{2c}$.
This means that $G$ is uniformly strictly $W^{1, \varphi}$- quasiconvex. 
But $\eta(z)\geq \alpha\geq g(z)-f(z)$ and $\eta(z)=g(z)-G(z)$, so $G(z)\leq f(z)$ for $|z|\leq M$. 

\medskip
\noindent
{\it Step 3}: The proof is similar to the previous one. 


\medskip
\noindent
{\it Step 4}: Assume that $f$ is a Borel measurable function, satisfying $(\mathcal{H}2)$. 
Since quasiconvex functions are locally Lipschitz (see \cite{FM}), we can see that $(ii)$ implies $(iv)$. 
So it suffices to show that a function satisfying $(iv)$ is asymptotically $W^{1, \varphi}$- quasiconvex. 

\noindent
Assume that $f$ satisfies $(iv)$ and consider the function
$$
F(z):=f(z)-\varepsilon \V(\sqrt{1+|z|^{2}})
$$
for $z\in \R^{Nn}$.
Here $\varepsilon>0$ will be chosen later appropriately. Now we prove that $F$ satisfies (\ref{iv1}) and (\ref{iv2}).

\noindent
Let $\xi \in C^{\infty}_{c}(\B_{1})$. Since $f$ satisfies (\ref{iv1}), we can write
\begin{equation}\begin{split}\label{A}
&\-int_{\B_{1}} F(z+D\xi) \, dx = \-int_{\B_{1}} f(z+D\xi) \, dx -\varepsilon \-int_{\B_{1}} \V(\sqrt{1+|z+D\xi|^{2}}) \, dx \\
&\geq f(z) + k\-int_{\B_{1}} \V_{|z|} (D\xi)\, dx -\varepsilon \-int_{\B_{1}} \V(\sqrt{1+|z+D\xi|^{2}}) \, dx  \\
&= F(z) + k\-int_{\B_{1}} \V_{|z|} (D\xi)\, dx -\varepsilon \-int_{\B_{1}} [\V(\sqrt{1+|z+D\xi|^{2}})-\V(\sqrt{1+|z|^{2}})] \, dx.
\end{split}
\end{equation}
Note that 
\begin{align*}
\V(\sqrt{1+|z+D\xi|^{2}}) &= \V(\sqrt{1+|z|^{2}}) + (D\V(\sqrt{1+|z|^{2}}),D\xi) \\
&+ \int_{0}^{1}\int_{0}^{1} t(D^{2}\V(\sqrt{1+|z+stD\xi|^{2}}) D\xi, D\xi) \, ds dt. 
\end{align*}
Thus integrating over $\B_{1}$ and applying Lemma \ref{keylem}, Remark \ref{REM}
 and $\V''(a+t)t^{2} \sim \V_{a}(t)$ for $a,t\geq 0$, it follows that 
\begin{equation}\begin{split}\label{B}
\-int_{\B_{1}} &[\V(\sqrt{1+|z+D\xi|^{2}})-\V(\sqrt{1+|z|^{2}})] \, dx = \-int_{\B_{1}} (D\V(\sqrt{1+|z|^{2}}),D\xi) \, dx \\
&+ \-int_{\B_{1}} \int_{0}^{1}\int_{0}^{1} t(D^{2}\V(\sqrt{1+|z+stD\xi|^{2}})D\xi, D\xi) \, ds \,dt \,dx   \\
&\leq C \-int_{\B_{1}} \int_{0}^{1}\int_{0}^{1} t\V''(\sqrt{1+|z+st D\xi|^{2}})|D\xi|^{2} \, ds \, dt \,dx  \\
&\leq C \-int_{\B_{1}} \V''(1+|z|+|D\xi|)|D\xi|^{2} \, dx  
\leq C \-int_{\B_{1}} \V_{1+|z|}(|D\xi|) \, dx  \\
&\leq C \-int_{\B_{1}} \V_{|z|}(|D\xi|) \, dx 
\end{split}
\end{equation}
for $|z|$ sufficiently large. Using together (\ref{A}) and (\ref {B}), and choosing $\varepsilon$ small enough, we have
\begin{align*}
\-int_{\B_{1}} F(z+D\xi) \, dx  \geq F(z) + K \-int_{\B_{1}} \V_{|z|}(|D\xi|) \, dx. 
\end{align*}

\noindent
Moreover, taking into account that $f$ satisfies (\ref{iv2}) we deduce, for $|z_{1}|, |z_{2}|>M$, 
\begin{align*}
|F(z_{2})-F(z_{1})|&\leq |f(z_{2}) - f(z_{1})| + \varepsilon |\V(\sqrt{1+|z_{1}|^{2}}) - \V(\sqrt{1+|z_{2}|^{2}})| \\
&\leq L |z_{2}-z_{1}| \V'(1+|z_{1}|+|z_{2}|) + \varepsilon |\V(\sqrt{1+|z_{1}|^{2}}) - \V(\sqrt{1+|z_{2}|^{2}})| \\
&\leq (L+c) |z_{2}-z_{1}| \V'(1+|z_{1}|+|z_{2}|).
\end{align*}

\noindent
Next we let 
$$
G(z):=\inf \left\{ \-int_{\B_{1}} F(z+D\xi)\, dx : \xi \in C^{\infty}_{c}(\B_{1}, \R^{N}) \right\}
$$
for $z\in \R^{Nn}$. With this definition we have that $G(z)\leq F(z)$ on $\R^{Nn}$ and $G(z)=F(z)$ for $|z|>M$. Now our aim is to prove that $G$ is locally bounded from below. 

\noindent
Fix $z\in \R^{Nn}$ such that $|z|\leq M+1$ and take $\z\in \R^{Nn}$such that $|\z|=2(M+1)$. We have
\begin{align*}
\-int_{\B_{1}} F(z+D\xi) \, dx &= \-int_{\B_{1}} [F(z+D\xi)-F(\z + D\xi)] \, dx +\-int_{\B_{1}} F(\z + D\xi) \, dx\\
&=I + II
\end{align*}
Since $F$ satisfies (\ref{iv1}) we get
$$
II=\-int_{\B_{1}} F(\z + D\xi) \, dx \geq F(\z) + k\-int_{\B_{1}} \V_{|\z|}(|D\xi|)\, dx. 
$$
Now we estimate $I$:
\begin{align*}
I&=\frac{1}{|\B_{1}|} \Bigl[ \int_{\{|D\xi|\leq 3(M+1)\}} [F(z+D\xi)-F(\z + D\xi)] \, dx \\
&+  \int_{\{|D\xi|> 3(M+1)\}} [F(z+D\xi)-F(\z + D\xi)] \, dx \Bigr]=\frac{1}{|\B_{1}|} [I_{1} + I_{2}].
\end{align*}
To estimate $I_{1}$ we use the fact that $F$ is locally bounded: $I_{1} \geq \tilde{C}$. 
Regarding $I_{2}$ we take into account that $F$ satisfies (\ref{iv2}), then we apply Young's inequality, $\V^{*}(\V'(t))\sim \V(t)$ and the $\Delta_{2}$ condition to deduce 
\begin{align*}
I_{2}&=  \int_{\{|D\xi|> 3(M+1)\}} [F(z+D\xi)-F(\z + D\xi)] \, dx \Bigr]\\
&\geq -L\int_{\{|D\xi|> 3(M+1)\}} |z-\z| \V'(1+|z+D\xi|+ |\z + D\xi|) \, dx\\
&\geq -L\delta c \int_{\{|D\xi|> 3(M+1)\}} \V(1+|z+D\xi|+|\z+D\xi|) \, dx -LC_{\delta} \int_{\{|D\xi|> 3(M+1)\}} \V(|z-\z|) \, dx\\ 
&\geq -L\delta c \int_{\{|D\xi|> 3(M+1)\}} \V(1+|\z|+|D\xi|)\, dx - C_{\delta} \\
&\geq -L\delta c \int_{\{|D\xi|> 3(M+1)\}} \V_{1+|\z|}(|D\xi|)\, dx - C_{\delta} \\
\end{align*}
where in the last inequality we used $\V_{1+|\z|}(|D\xi|)\sim \V(1+|\z| + |D\xi|)$ since $|D\xi|>1+|\z|$. 

\noindent
Putting together estimates on $I_{1}$, $I_{2}$ and $II$, taking into account that $\V_{|\z|}(t)\sim \V_{1+|\z|}(t)$ and choosing $\delta$ suitably we have
\begin{align*}
\-int_{\B_{1}} F(z+D\xi) \, dx & \geq -C\delta \int_{\{|D\xi|> 3(M+1)\}} \V_{|\z|}(|D\xi|)\, dx + F(\z) 
+ k\-int_{\B_{1}} \V_{|\z|}(|D\xi|)\, dx -C\\
&\geq -C. 
\end{align*}
So we get $G(z)\geq -C$ for $|z|\leq M+1$. Moreover for $|z|>M+1$ we gain
$$
G(z)= f(z)-\varepsilon \V(\sqrt{1+|z|^{2}}) \geq -C (1+ \V(|z|))
$$
and this proves the local boundedness of $G$ from below. \\
By Dacorogna's formula \footnote{In \cite{Dacorogna} Theorem 5 it is assumed that there exists a quasiconvex function from below $F$, and the verification of this hypothesis is not immediate in our situation. However, we may still apply the Theorem since the missing hypothesis is only needed to conclude that $G$ is locally bounded from below. Moreover, by (\ref{max}) we can say that $\V(|z|)\leq c (1+|z|^{p_{1}})$, $p_{1}>1$. } we have that $G$ coincides with the quasiconvex envelope $QF$ of $F$, and thus it is quasiconvex. 

\noindent
Finally we can prove that 
$$
g(z)= G(z) + \varepsilon \V(\sqrt{1+|z|^{2}}) \quad \mbox{for} \, z\in \R^{Nn}
$$
is a uniformly strictly $W^{1, \V}$-quasiconvex function.\\
By the quasiconvexity of $G$  we get
\begin{align*}
\-int_{\B_{1}} g(z+D\xi) \, dx &= \-int_{\B_{1}} G(z+D\xi) \, dx + \varepsilon \-int_{\B_{1}} \V(\sqrt{1+|z+D\xi|^{2}})\, dx \\
& \geq G(z) + \varepsilon \V(\sqrt{1+|z|^{2}}) + \varepsilon \-int_{\B_{1}} [ \V(\sqrt{1+|z+D\xi|^{2}})-\V(\sqrt{1+|z|^{2}})]\, dx \\
&= g(z) + \varepsilon \-int_{\B_{1}} [ \V(\sqrt{1+|z+D\xi|^{2}})-\V(\sqrt{1+|z|^{2}})]\, dx.
\end{align*}
Using Lemma \ref{keylem}, Remark \ref{REM} and $\V_{a}(t)\sim \V''(a+t)t^{2}$ it holds
\begin{align*}
&\-int_{\B_{1}} [\V(\sqrt{1+|z+D\xi|^{2}})-\V(\sqrt{1+|z|^{2}})] \, dx = \nonumber \\
&= \-int_{\B_{1}} (D\V(\sqrt{1+|z|^{2}}),D\xi) \, dx 
+ \-int_{\B_{1}} \int_{0}^{1}\int_{0}^{1} t(D^{2}\V(\sqrt{1+|z+stD\xi|^{2}})D\xi, D\xi) \, ds dt dx \nonumber \\
&= \-int_{\B_{1}} \int_{0}^{1}\int_{0}^{1} t(D^{2}\V(\sqrt{1+|z+stD\xi|^{2}})D\xi, D\xi) \, ds dt dx \\
&\geq C \-int_{\B_{1}} \int_{0}^{1}\int_{0}^{1} t \V''(\sqrt{1+|z+st D\xi|^{2}})|D\xi|^{2} \, ds dt dx \\
&\geq C \-int_{\B_{1}} \V''(1+|z|+|D\xi|)|D\xi|^{2} \, dx \\
&\geq C \-int_{\B_{1}} \V_{1+|z|}(|D\xi|) \, dx.
\end{align*}

\noindent
We deduce that $g$ is uniformly strictly $W^{1, \V}$-quasiconvex, i.e.
\begin{align*}
\-int_{\B_{1}} g(z+D\xi) \, dx \geq g(z) + \varepsilon c \-int_{\B_{1}} \V_{1+|z|}(|D\xi|) \, dx.
\end{align*}
Moreover we have that $g(z)=f(z)$ for $|z|>M+1$.
This proves that $f$ is asymptotically quasiconvex. 

\end{proof}


\section{\bf Caccioppoli estimate}
\noindent
The starting point for the investigation of the regularity properties of weak solutions is a Caccioppoli-type inequality. 

\noindent
We need the following Lemma (see Lemma 10 \cite{DLSV}):
\begin{lem}\label{iterazione}
Let $\psi$ be an $N$-function with $\psi\in \Delta_{2}$, let $r>0$ and let $h\in L^{\psi}(\B_{2r}(x_{0}))$. Further, let $f:[\frac{r}{2},r]\rightarrow [0, \infty)$ be a bounded function such that for all $\frac{r}{2} <s<t<r$
$$
f(s)\leq \theta f(t) +A \int_{\B_{t}(x_{0})} \psi \Bigl(\frac{|h(y)|}{t-s} \Bigr) \, dy ,
$$
where $A>0$ and $\theta \in [0,1)$. Then 
$$
f\Bigl(\frac{r}{2}\Bigr)\leq C(\theta, \Delta_{2}(\psi)) A \int_{\B_{r}(x_{0})} \psi \Bigl( \frac{|h(y)|}{2r} \Bigr) \, dy.  
$$
\end{lem}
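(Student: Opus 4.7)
The approach is a classical Giaquinta--Giusti iteration argument, adapted from the power-growth setting $\psi(t)=t^{\alpha}$ to a general $N$-function by replacing the explicit scaling identity $\psi(\mu t)=\mu^{\alpha}\psi(t)$ with its $\Delta_{2}$ surrogate.

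First I would fix a parameter $\lambda\in(0,1)$, to be chosen later in terms of $\theta$ and $\Delta_{2}(\psi)$, and consider the increasing sequence $r_{0}=r/2$, $r_{i}=r/2+(r/2)(1-\lambda^{i})$ for $i\geq 1$; then $r_{i}\in[r/2,r)$, $r_{i}\uparrow r$, and $r_{i+1}-r_{i}=(r/2)(1-\lambda)\lambda^{i}$. Applied to each consecutive pair (the endpoint case $s=r/2$ being handled by a routine limiting argument, or by starting the iteration from $r_{0}=r/2+\varepsilon$ and letting $\varepsilon\to 0^{+}$), the hypothesis yields
$$
f(r_{i})\leq \theta\,f(r_{i+1})+A\int_{\B_{r_{i+1}}(x_{0})}\psi\Bigl(\frac{2\,|h(y)|}{r(1-\lambda)\,\lambda^{i}}\Bigr)\,dy.
$$

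Next I would invoke the $\Delta_{2}$ condition: with $c:=\Delta_{2}(\psi)$, iteration of $\psi(2t)\leq c\,\psi(t)$ gives $\psi(\mu t)\leq c\,\mu^{\log_{2}c}\psi(t)$ for all $\mu\geq 1$ and $t\geq 0$. Taking $\mu=\lambda^{-i}$ and absorbing the fixed dilation factor $\tfrac{4}{1-\lambda}$ via $\Delta_{2}$ once more, this produces
$$
\psi\Bigl(\frac{2\,|h(y)|}{r(1-\lambda)\,\lambda^{i}}\Bigr)\leq c_{\lambda}\,K^{i}\,\psi\Bigl(\frac{|h(y)|}{2r}\Bigr),\qquad K:=\lambda^{-\log_{2}c}.
$$
Iterating the above telescopically $n$ times then gives
$$
f(r/2)\leq \theta^{n}\,f(r_{n})+A\,c_{\lambda}\,\Bigl(\sum_{i=0}^{n-1}(\theta K)^{i}\Bigr)\int_{\B_{r}(x_{0})}\psi\bigl(|h(y)|/(2r)\bigr)\,dy.
$$

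The crucial observation is that $K\to 1^{+}$ as $\lambda\to 1^{-}$, so $\lambda$ (and hence $K$) can be fixed so close to $1$ that $\theta K<1$; the geometric series is then bounded by $(1-\theta K)^{-1}$, and $\theta^{n}f(r_{n})\to 0$ by boundedness of $f$ and $\theta<1$. Sending $n\to\infty$ yields the stated conclusion with a constant $C=C(\theta,\Delta_{2}(\psi))$. The main obstacle, compared to the power case, is precisely this balance in the choice of $\lambda$: in the pure-power setting the identity $\psi(\lambda^{-i}t)=\lambda^{-i\alpha}\psi(t)$ immediately produces a clean geometric factor, whereas here one must work with the surrogate exponent $\log_{2}c$ in $K$ and verify that shrinking $1-\lambda$ preserves the summability condition $\theta K<1$ while keeping the prefactor $c_{\lambda}$ finite---which it is, since $\lambda$ is ultimately fixed in terms of $\theta$ and $\Delta_{2}(\psi)$ alone.
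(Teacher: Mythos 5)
Your argument is correct. The paper itself does not reprove this lemma (it simply cites Lemma~10 of \cite{DLSV}), so there is nothing to compare against, but your proof is the standard hole-filling iteration argument adapted to the Orlicz setting: the geometric sequence of radii $r_{i}=r/2+(r/2)(1-\lambda^{i})$, the $\Delta_{2}$-induced doubling bound $\psi(\mu t)\leq c\,\mu^{\log_{2}c}\psi(t)$ for $\mu\geq 1$ in place of the power scaling identity, telescoping, and then fixing $\lambda$ close enough to $1$ so that $\theta K<1$ with $K=\lambda^{-\log_{2}c}$. All the steps check out: $c\geq 1$ forces $K\geq 1$ and $K\to 1^{+}$ as $\lambda\to 1^{-}$, so the required balance is always achievable since $\theta<1$; the remainder term $\theta^{n}f(r_{n})$ vanishes by boundedness of $f$; and the prefactor $c_{\lambda}=c\bigl(\tfrac{4}{1-\lambda}\bigr)^{\log_{2}c}$ stays finite once $\lambda$ is fixed in terms of $\theta$ and $\Delta_{2}(\psi)$. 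You correctly flag the boundary technicality ($s>r/2$ strictly) and the resolution is the usual one. This is, in substance, certainly the same proof as in \cite{DLSV}.
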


\begin{thm}\label{thmcacc}
Let $u\in W^{1, \V}_{loc}(\Omega)$ be a minimizer of $\F$ and let $\B_{R}$ be a ball such that $\B_{2R}\Subset \Omega$. Then 
\begin{equation*}
\int_{\B_{R}} \V_{|z|} (|Du-z|) \, dx \leq c \int_{\B_{2R}} \V_{|z|}\Bigl( \frac{|u-q|}{R}\Bigr) dx
\end{equation*}
for all $z\in \R^{Nn}$ with $|z|>M$ and all linear polynomials $q$ on $\R^{n}$ with values in $\R^{N}$ such that $Dq=z$. 
\end{thm}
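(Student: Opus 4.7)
The plan is the standard quasiconvexity–minimality sandwich, adapted to the asymptotic $\V$-growth setting. Set $v := u-q$, fix $R \le s < t \le 2R$ and choose a cutoff $\eta \in C^\infty_c(\B_t)$ with $\eta \equiv 1$ on $\B_s$, $0 \le \eta \le 1$ and $|D\eta| \le C/(t-s)$. Decompose $v = \varphi + \psi$ with $\varphi := \eta v \in W^{1,\V}_0(\B_t,\R^N)$ and $\psi := (1-\eta)v$, so that $D\varphi + D\psi = Du - z$ pointwise, and $D\psi \equiv 0$ on $\B_s$. Since $|z| > M$, the characterization (iv) of Theorem \ref{thm1}, available under $(\mathcal{H}3)$, lets us apply \eqref{iv1} with $\varphi$ as test function (after approximating by $C^\infty_c$, which is legitimate thanks to the uniform $\Delta_2$-control from Lemma \ref{phia}):
\begin{equation*}
k \int_{\B_t} \V_{|z|}(|D\varphi|)\,dx \;\le\; \int_{\B_t}\bigl[f(z+D\varphi)-f(z)\bigr]\,dx.
\end{equation*}

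Next, on $\B_s$ one has $z + D\varphi = Du$, while $u - \varphi \in u + W^{1,\V}_0(\B_t,\R^N)$ is an admissible competitor whose gradient equals $z + D\psi$ (thus equals $z$ on $\B_s$). Minimality of $u$ therefore yields
\begin{equation*}
\int_{\B_s}\bigl[f(Du)-f(z)\bigr]\,dx \;\le\; \int_{\B_t\setminus\B_s}\bigl[f(z+D\psi)-f(Du)\bigr]\,dx.
\end{equation*}
Combining these two inequalities, and using $\V_{|z|}(|D\varphi|) = \V_{|z|}(|Du-z|)$ on $\B_s$, reduces the problem to estimating the annular quantity
\begin{equation*}
\mathcal{E}(s,t) \;:=\; \int_{\B_t\setminus\B_s}\bigl[f(z+D\varphi)-f(z) + f(z+D\psi)-f(Du)\bigr]\,dx.
\end{equation*}

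To bound $\mathcal{E}(s,t)$ we exploit the algebraic identity $Du = (z+D\varphi) + D\psi$: writing both $f$-differences as integrals of $Df$, the first-order pieces in $D\varphi$ cancel and the remainder becomes a double integral involving $D^2 f$ applied to an intermediate point and contracted with $D\psi \otimes D\varphi$. Using $(\mathcal{H}1)$, $(\mathcal{H}4)$ and Lemmas \ref{lemtV''}, \ref{thmA2} to average the $\V''$ factor up to $\V''(|z| + |D\varphi| + |D\psi|)$, then Cauchy–Schwarz in the weight $\V''$ followed by the shift-change Lemma \ref{cambio} to convert the intermediate shifts $|z| + |D\varphi|$, $|z| + |D\psi|$ back to $|z|$, one obtains, for every $\varepsilon > 0$,
\begin{equation*}
\mathcal{E}(s,t) \;\le\; \varepsilon \int_{\B_t\setminus\B_s} \V_{|z|}(|Du-z|)\,dx + C_\varepsilon \int_{\B_t} \V_{|z|}\!\Bigl(\tfrac{|u-q|}{t-s}\Bigr)\,dx,
\end{equation*}
where $\V_{|z|}(|D\varphi|)$ on $\B_t\setminus\B_s$ is controlled by $\V_{|z|}(|D\eta\cdot v|) + \V_{|z|}(|Du-z|)$ and the $|D\eta|\le C/(t-s)$ bound is invoked on the first summand. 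Adding $\varepsilon\!\int_{\B_s}\V_{|z|}(|Du-z|)$ to both sides produces the hole-filling inequality
\begin{equation*}
\int_{\B_s}\V_{|z|}(|Du-z|)\,dx \;\le\; \theta \int_{\B_t}\V_{|z|}(|Du-z|)\,dx + C\int_{\B_{2R}}\V_{|z|}\!\Bigl(\tfrac{|u-q|}{t-s}\Bigr)\,dx
\end{equation*}
with $\theta \in [0,1)$ for $\varepsilon$ small enough, valid for all $R \le s < t \le 2R$; Lemma \ref{iterazione} applied with $r = 2R$ and $\psi = \V_{|z|}$ (which is in $\Delta_2$ uniformly in $|z|$ by Lemma \ref{phia}) concludes the proof.

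The technical heart of the argument—and the step most likely to trip one up—is the bound on $\mathcal{E}(s,t)$: one has to keep meticulous track of three distinct shift bases ($|z|$, $|z|+|D\varphi|$, $|z|+|D\varphi|+|D\psi|$) arising from the Taylor remainder, and verify that each invocation of Lemma \ref{cambio} contributes a quadratic $|V(\cdot)-V(\cdot)|^2 \sim \V_{|z|}(|Du-z|)$ error that is absorbable, with constants independent of $z$ (uniformity supplied by Lemma \ref{phia}). The rest of the proof is standard book-keeping.
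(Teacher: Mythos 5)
Your argument is, in structure, exactly the paper's proof: the decomposition $v=\varphi+\psi$ with a cutoff between $\B_s$ and $\B_t$, the appeal to characterization~(iv) of Theorem~\ref{thm1} for $|z|>M$, the competitor $u-\varphi$ for minimality, the Taylor/$D^{2}f$ estimate via $(\mathcal{H}4)$ and the averaging lemmas, the hole-filling, and the iteration Lemma~\ref{iterazione} all match; in fact your $\mathcal{E}(s,t)$ is literally the paper's $\mathcal{I}_{1}+\mathcal{I}_{3}$ once one notes the integrand vanishes on $\B_{s}$ (where $D\psi=0$ and $z+D\varphi=Du$). One inaccuracy is worth fixing, though: your claimed bound
\begin{equation*}
\mathcal{E}(s,t)\le\varepsilon\int_{\B_{t}\setminus\B_{s}}\V_{|z|}(|Du-z|)\,dx+C_{\varepsilon}\int_{\B_{t}}\V_{|z|}\Bigl(\tfrac{|u-q|}{t-s}\Bigr)dx
\end{equation*}
cannot hold with $\varepsilon$ arbitrarily small. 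On the annulus both $D\varphi$ and $D\psi$ contain an $O(1)$ multiple of $Du-z$, so the $\V''(\cdot)|D\varphi||D\psi|$ contraction necessarily produces a \emph{fixed}-constant multiple of $\V_{|z|}(|Du-z|)$ there. The genuinely small parameter $\varepsilon$ should instead sit in front of $\int_{\B_{t}}\V_{|z|}(|D\varphi|)\,dx$ (the piece one pulls out of the mixed $|D\varphi|\,|D\psi|$ by Young's inequality), and that term is absorbed against the coercive $k\int_{\B_{t}}\V_{|z|}(|D\varphi|)\,dx$ supplied by quasiconvexity---not by hole-filling. What remains is
$\int_{\B_{s}}\V_{|z|}(|Du-z|)\,dx\le c\int_{\B_{t}\setminus\B_{s}}\V_{|z|}(|Du-z|)\,dx+c\int_{\B_{t}}\V_{|z|}(\frac{|u-q|}{t-s})\,dx$
with a fixed $c$, and hole-filling yields $\theta=\frac{c}{c+1}<1$ regardless. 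Finally, the paper manages without Lemma~\ref{cambio}: it passes from $\V''(|z|+|D\varphi|+|D\psi|)(|D\varphi|+|D\psi|)$ to $\V'_{|z|}(|D\varphi|+|D\psi|)$ directly via $\V'_{a}(t)\sim t\V''(a+t)$, splits $\V'_{|z|}(|D\varphi|+|D\psi|)\lesssim\V'_{|z|}(|D\varphi|)+\V'_{|z|}(|D\psi|)$, and then applies Young's inequality for the pair $(\V_{|z|},(\V_{|z|})^{*})$; this is a bit more elementary than the shift-change detour you sketch, though both routes lead to the same place.
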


\begin{proof}
Let $0<s<t$ and consider $\B_{s}\subset \B_{t} \subset \Omega$. Let $\eta \in C^{\infty}_{c}(\B_{t})$ be a standard cut-off function between $\B_{s}$ and $\B_{t}$, such that $|D\eta| \leq \frac{c}{t-s}$.

\noindent
Define $\xi= \eta (u-q)$ and $\zeta = (1-\eta) (u-q)$; then $D\xi + D\zeta = Du-z$.

Consider 
$$
\mathcal{I} := \int_{\B_{t}} [ f(z+D\xi) - f(z) ]  \, dx. 
$$
By hypothesis $f$ is asymptotically $W^{1, \V}$-quasiconvex, and by Theorem \ref{thm1} we know that f satisfies (iv), so for $|z|>M$ we have
\begin{equation}\label{I}
\mathcal{I}\geq k \int_{\B_{t}} \V_{|z|} (|D\xi|) \, dx. 
\end{equation}
Moreover 
\begin{align*}
\mathcal{I} &= \int_{\B_{t}} [f(z+D\xi) - f(Du)+ f(Du)- f(Du-D\xi) + f(Du-D\xi) - f(z) ]\, dx\\
&=  \int_{\B_{t}} [f(z+D\xi) - f(z+D\xi +D\zeta)] \, dt + \int_{\B_{t}} [f(Du)- f(Du-D\xi)] \, dt \\
&+ \int_{\B_{t}} [f(z+D\zeta) - f(z)] \, dx=\mathcal{I}_{1}+\mathcal{I}_{2}+\mathcal{I}_{3}.
\end{align*}
Note that $\mathcal{I}_{2}\leq 0$ since $u$ is a minimizer. Let us concentrate on $\mathcal{I}_{1}$: 
\begin{align*}
\mathcal{I}_{1}=-\int_{\B_{t}} \int_{0}^{1} Df(z+D\xi + \theta D\zeta) D\zeta \,d\theta dx.
\end{align*}
Analogously concerning $\mathcal{I}_{3}$, we have
\begin{align*}
\mathcal{I}_{3}=\int_{\B_{t}} \int_{0}^{1} Df(z +\theta D\zeta)D\zeta \,d\theta \,dx.
\end{align*}
Thus we obtain that
\begin{align*}
\mathcal{I}_{1}+\mathcal{I}_{3}& 
= \int_{\B_{t}} \int_{0}^{1} [Df(z +\theta D\zeta) - Df(z+D\xi + \theta D\zeta)] D\zeta \,d\theta dx \\
& = \int_{\B_{t}} \int_{0}^{1} [Df(z +\theta D\zeta) -Df(z)+ Df(z) - Df(z+D\xi + \theta D\zeta)] D\zeta \,d\theta dx \\
&=\int_{\B_{t}} \int_{0}^{1} [Df(z +\theta D\zeta) -Df(z)] D\zeta \,d\theta dx  \\
&- \int_{\B_{t}} \int_{0}^{1} [Df(z+D\xi + \theta D\zeta)-Df(z)] D\zeta \,d\theta dx
\end{align*}
from which 
\begin{align*}
\mathcal{I}_{1}+\mathcal{I}_{3}&\leq \int_{\B_{t}} \int_{0}^{1} |Df(z +\theta D\zeta) -Df(z)| |D\zeta| \,d\theta dx \\
& + \int_{\B_{t}} \int_{0}^{1} |Df(z+D\xi + \theta D\zeta)-Df(z)| |D\zeta| \,d\theta dx. 
\end{align*}
By using hypothesis $(\mathcal{H}4)$ and Lemma \ref{lem2} we have
\begin{align*}
\int_{\B_{t}}\int_{0}^{1} &|Df(z +\theta D\zeta) -Df(z)| |D\zeta| \,d\theta dx \\
& \leq \int_{\B_{t}}\int_{0}^{1} \int_{0}^{1} |D^{2} f(tz+(1-t)(z+\theta D\zeta)|\,|\theta D\zeta|\, |D\zeta|\, dt d\theta dx \\
&\leq c \int_{\B_{t}}\int_{0}^{1} \int_{0}^{1} \V''(|tz+(1-t)(z+\theta D\zeta)|) \, |D\zeta|^{2} dt d\theta dx \\
&\leq c \int_{\B_{t}} \V''(2|z|+|z+D\zeta|) \, |D\zeta|^{2} dx  \\
&\leq c\int_{\B_{t}} \frac{\V'(2|z|+|z+D\zeta|)}{2|z|+|z+D\zeta|} |D\zeta|^{2} dx. 
\end{align*}
Taking into account the $\Delta_{2}$ condition for $\V'$ and $\V_{a}(t)\sim \V''(a+t) t^{2}$, it follows that
\begin{align*}
\int_{\B_{t}}\int_{0}^{1} |Df(z +\theta D\zeta) -Df(z)| |D\zeta| \,d\theta dx &\leq c\int_{\B_{t}} \frac{\V'(|z|+|D\zeta|)}{|z|+|D\zeta|} |D\zeta|^{2} dx \\
&\leq c\int_{\B_{t}} \V''(|z|+|D\zeta|) |D\zeta|^{2} dx\\
&\leq c\int_{\B_{t}} \V_{|z|}(|D\zeta|) dx.
\end{align*}
Analogously we can deduce
\begin{align*}
&\int_{\B_{t}} \int_{0}^{1} |Df(z+D\xi + \theta D\zeta)-Df(z)| |D\zeta| \,d\theta dx \leq \\
&\leq \int_{\B_{t}} \int_{0}^{1} \int_{0}^{1} |D^{2}f(t(z+D\xi+\theta D\zeta)+(1-t)z)| \, |D\xi + \theta D\zeta| \,|D\zeta| \, dt d\theta dx\\
&\leq c\int_{\B_{t}} \int_{0}^{1} \int_{0}^{1}  \V''(|t(z+D\xi+\theta D\zeta)+(1-t)z|) \, |D\xi + \theta D\zeta| \,|D\zeta| \, dt d\theta dx\\
&\leq c\int_{\B_{t}} \V''(|z|+|D\xi|+|D\zeta|) \, (|D\xi| + |D\zeta|) \,|D\zeta| \, dx\\
&\leq c\int_{\B_{t}} \V'_{|z|}(|D\xi|+|D\zeta|) \,|D\zeta| \, dx\\
&\leq c\int_{\B_{t}} \V'_{|z|}(|D\xi|) \,|D\zeta| \, dx +c\int_{\B_{t}} \V'_{|z|}(|D\zeta|) \,|D\zeta| \, dx \\
&\leq c\int_{\B_{t}} \V'_{|z|}(|D\xi|) \,|D\zeta| \, dx +c\int_{\B_{t}} \V_{|z|}(|D\zeta|) \, dx 
\end{align*}
where in the last line we used the equivalence $\V'_{a}(t)\sim t\V''(a+t)$ and the fact that
\begin{align*}
\V'_{|z|}(|D\xi|+|D\zeta|)
&\leq c\V'_{|z|}(|D\xi|) + c\V'_{|z|}(|D\zeta|).
\end{align*}

Applying Young's inequality for $\V_{a}$ we have
\begin{align*}
\mathcal{I}_{1}+\mathcal{I}_{3}&\leq c\int_{\B_{t}} \V_{|z|}(|D\zeta|)\, dx + c\int_{\B_{t}} \V'_{|z|}(|D\xi|) \,|D\zeta| \, dx \\
&\leq c\int_{\B_{t}} \V_{|z|}(|D\zeta|)\, dx + c\delta \int_{\B_{t}} \V_{|z|}(|D\xi|)\, dx + C_{\delta} \int_{\B_{t}} \V_{|z|}(|D\zeta|) \, dx \\
&\leq C'_{\delta} \int_{\B_{t}} \V_{|z|} (|D\zeta|) \, dx + c \delta \int_{\B_{t}} \V_{|z|} (|D\xi|) \, dx.  
\end{align*}

\noindent
Taking into account (\ref{I}) and choosing $\delta$ such that $k-c\delta>0$ we conclude
$$
\int_{\B_{t}} \V_{|z|} (|D\xi|) \, dx \leq C \int_{\B_{t}} \V_{|z|} (|D\zeta|) \, dx.
$$
Now, by the definition of $\zeta$ we have $D\zeta=(1-\eta)(Du-z)-D\eta (u-q)$ and we can note that $D\zeta=0$ in $\B_{s}$. 
Moreover using the convexity of $\V_{|z|}$ and the fact that $|D\eta| \leq \frac{c}{t-s}$, we have
\begin{align*}
\V_{|z|}(|D\zeta|) 
\leq \V_{|z|}\Bigl ((1-\eta)|Du-z|+ \frac{c}{t-s}|u-q| \Bigr)
\leq c\V_{|z|}(|Du-z|) + c \V_{|z|}\Bigl(\frac{|u-q|}{t-s} \Bigr).
\end{align*}
Hence
\begin{align*}
\int_{\B_{t}} \V_{|z|} (|D\xi|) \, dx &\leq C \int_{\B_{t}\setminus \B_{s}} \V_{|z|} (|D\zeta|) \, dx \\
&\leq c \int_{\B_{t}\setminus \B_{s}} \V_{|z|}(|Du-z|) \, dx + c \int_{\B_{t}} \V_{|z|}\Bigl(\frac{|u-q|}{t-s} \Bigr) \, dx.
\end{align*}
Thus we have
\begin{align*}
\int_{\B_{s}} \V_{|z|} (|Du-z|) \, dx & = \int_{\B_{s}} \V_{|z|} (|D\xi|) \, dx \\
& \leq \int_{\B_{t}} \V_{|z|} (|D\xi|) \, dx\\
& \leq c \int_{\B_{t}\setminus \B_{s}} \V_{|z|}(|Du-z|) \, dx + c \int_{\B_{t}} \V_{|z|}\Bigl(\frac{|u-q|}{t-s} \Bigr) \, dx.
\end{align*}
We fill the hole by adding to both sides the term $\displaystyle{c \int_{\B_{s}} \V_{|z|}(|Du-z|) \, dx}$ and we divide by $c+1$, thus obtaining 
\begin{align*}
\int_{\B_{s}} \V_{|z|} (|Du-z|) \, dx &\leq \frac{c}{c+1}  \int_{\B_{t}}  \V_{|z|} (|Du-z|) \, dx  + C \int_{\B_{t}} \V_{|z|}\Bigl(\frac{|u-q|}{t-s} \Bigr) \, dx\\
&= \lambda \int_{\B_{t}}  \V_{|z|} (|Du-z|) \, dx + \alpha \int_{\B_{t}} \V_{|z|}\Bigl(\frac{|u-q|}{t-s} \Bigr) \, dx
\end{align*}
where $\lambda:= \frac{c}{c+1}<1$ and $\alpha>0$. Now we can apply Lemma \ref{iterazione} to get the desired result. 

\end{proof}

\noindent
An immediate consequence of the previous result is the following:
\begin{cor}\label{cc1}
There exists $\alpha \in (0,1)$ such that for all minimizers $u\in W^{1, \V}(\Omega)$ of $\mathcal{F}$,
all balls $\B_{R}$ with $\B_{2R} \Subset \Omega$, and all $z\in \R^{Nn}$ with $|z|>M$
$$
\-int_{\B_{R}} |V(Du)-V(z)|^{2} dx \leq 
c \Bigl(\-int_{\B_{2R}} |V(Du)-V(z)|^{2\alpha} dx\Bigr)^{\frac{1}{\alpha}}
$$
\end{cor}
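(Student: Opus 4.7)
The plan is to combine the Caccioppoli inequality just proved in Theorem \ref{thmcacc} with the Orlicz Sobolev--Poincar\'e inequality of Theorem \ref{SP}, then to translate the resulting estimate from $\V_{|z|}(|Du-z|)$ to $|V(Du)-V(z)|^2$ via the pointwise equivalence of Lemma \ref{lem4}. The key observation that makes the whole argument quantitative is that, by Lemma \ref{phia}, both $\V_{|z|}$ and $(\V_{|z|})^*$ satisfy the $\Delta_2$-condition with constants that do not depend on $|z|$; this is what guarantees that all constants and the exponent $\alpha$ produced along the way are uniform in $z$.

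First I would fix $z\in\R^{Nn}$ with $|z|>M$ and specify the affine polynomial $q(x):=z(x-x_0)+c$ on $\B_{2R}$ with $c$ chosen so that $(u-q)_{\B_{2R}}=0$. Applying Theorem \ref{thmcacc} to this $q$, passing to averaged integrals, and using the uniform $\Delta_2$-property of $\V_{|z|}$ to replace $\V_{|z|}(|u-q|/R)$ by $c\,\V_{|z|}(|u-q|/(2R))$, I obtain
\begin{equation*}
\-int_{\B_R}\V_{|z|}(|Du-z|)\,dx \leq c\-int_{\B_{2R}}\V_{|z|}\!\left(\frac{|u-q|}{2R}\right) dx.
\end{equation*}

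Next, since $D(u-q)=Du-z$ and $(u-q)_{\B_{2R}}=0$, I would apply Theorem \ref{SP} with the $N$-function $\V_{|z|}$ on the ball $\B_{2R}$, which yields an exponent $\alpha\in(0,1)$ (independent of $|z|$, by Lemma \ref{phia}) such that
\begin{equation*}
\-int_{\B_{2R}}\V_{|z|}\!\left(\frac{|u-q|}{2R}\right) dx \leq k\left(\-int_{\B_{2R}}\V_{|z|}^{\alpha}(|Du-z|)\,dx\right)^{\!1/\alpha}.
\end{equation*}

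To conclude, I would invoke Lemma \ref{lem4} on both sides to replace $\V_{|z|}(|Du-z|)$ by $|V(Du)-V(z)|^2$ (and hence $\V_{|z|}^\alpha(|Du-z|)$ by $|V(Du)-V(z)|^{2\alpha}$, up to constants). Chaining the two displayed inequalities then gives exactly the reverse-H\"older estimate claimed in the corollary. The only delicate point, and what I expect to be the main obstacle, is precisely the uniformity of all constants (Caccioppoli, Sobolev--Poincar\'e, the $\Delta_2$-constants, the equivalence in Lemma \ref{lem4}, and the exponent $\alpha$) with respect to the shift $|z|$; but this uniformity is built into Lemma \ref{phia} and so presents no real difficulty.
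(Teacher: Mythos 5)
Your proposal is correct and follows essentially the same route as the paper: Caccioppoli (Theorem \ref{thmcacc}) with $q$ chosen so that $(u-q)_{\B_{2R}}=0$, then Sobolev--Poincar\'e (Theorem \ref{SP}) applied to $\V_{|z|}$, and finally Lemma \ref{lem4} to pass between $\V_{|z|}(|Du-z|)$ and $|V(Du)-V(z)|^2$. Your explicit remark on the $z$-uniformity of the $\Delta_2$-constants via Lemma \ref{phia} is a good justification that the paper leaves implicit.
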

\begin{proof}
By using Lemma \ref{lem4}, applying Theorem \ref{thmcacc} with $q$ such that $(u-q)_{\B_{2R}}=0$ and Theorem \ref{SP} we have
\begin{align*}
\-int_{\B_{R}} |V(Du)-V(z)|^{2} dx &\leq c \, \-int_{\B_{R}} \V_{|z|}(|Du-z|) \, dx \\
&\leq c \-int_{\B_{2R}} \V_{|z|}\Bigl(\frac{|u-q|}{R}\Bigr)\, dx \\
&\leq c \Bigl(\-int_{\B_{2R}} \V_{|z|}^{\alpha} (|Du-z|)\, dx \Bigr)^{\frac{1}{\alpha}} \\
&\leq c\Bigl(\-int_{\B_{2R}} |V(Du)-V(z)|^{2\alpha} dx\Bigr)^{\frac{1}{\alpha}}. 
\end{align*}
\end{proof}

\noindent
Using Gehring's Lemma we deduce the following result. 
\begin{cor}\label{cc2}
There exists $s >1$ such that for all minimizers $u\in W^{1, \V}(\Omega)$ of $\mathcal{F}$,
all balls $\B_{R}$ with $\B_{2R} \Subset \Omega$, and all $z\in \R^{Nn}$ with $|z|>M$
$$
\Bigl( \-int_{\B_{R}} |V(Du)-V(z)|^{2s} dx \Bigr)^{\frac{1}{s}}\leq 
c \-int_{\B_{2R}} |V(Du)-V(z)|^{2} dx
$$
\end{cor}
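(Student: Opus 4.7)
The plan is to view Corollary \ref{cc1} as a reverse Hölder inequality and deduce higher integrability via Gehring's lemma. Set $g:=|V(Du)-V(z)|^{2}$. Then Corollary \ref{cc1} reads
\begin{equation*}
\-int_{\B_{R}} g\, dx \leq c\, \Bigl(\-int_{\B_{2R}} g^{\alpha}\, dx\Bigr)^{1/\alpha}
\end{equation*}
whenever $\B_{2R}\Subset\Omega$ and $|z|>M$, with some $\alpha\in(0,1)$ independent of the ball and of $z$.

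The idea is to introduce the auxiliary function $h:=g^{\alpha}$, so that the inequality above is equivalent to
\begin{equation*}
\Bigl(\-int_{\B_{R}} h^{q}\, dx\Bigr)^{1/q}\leq c\,\-int_{\B_{2R}} h\, dx
\end{equation*}
with $q:=1/\alpha>1$. This is exactly the hypothesis of Gehring's lemma (in its $L^{1}$-form, see e.g.\ Stredulinsky or Giaquinta--Modica), so there exists $\varepsilon>0$, depending only on $q$, $n$ and the constant $c$, such that $h\in L^{q+\varepsilon}_{\mathrm{loc}}$ and
\begin{equation*}
\Bigl(\-int_{\B_{R}} h^{q+\varepsilon}\, dx\Bigr)^{1/(q+\varepsilon)} \leq c\, \Bigl(\-int_{\B_{2R}} h^{q}\, dx\Bigr)^{1/q}.
\end{equation*}

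To conclude, I translate back from $h$ to $g$. Since $h^{q}=g$ and $h^{q+\varepsilon}=g^{\alpha(q+\varepsilon)}=g^{1+\alpha\varepsilon}$, setting $s:=1+\alpha\varepsilon>1$ yields $q+\varepsilon=s/\alpha$, and the Gehring estimate rewrites as
\begin{equation*}
\Bigl(\-int_{\B_{R}} g^{s}\, dx\Bigr)^{\alpha/s}\leq c\,\Bigl(\-int_{\B_{2R}} g\, dx\Bigr)^{\alpha},
\end{equation*}
which, after raising to the power $1/\alpha$, is exactly the claim. Notice that the final constant depends only on $\alpha$, $n$ and the constant appearing in Corollary \ref{cc1} (which is itself controlled by the characteristics of $\V$ and the asymptotic quasiconvexity constants), so the bound is uniform in $\B_{R}$ and in $z$ with $|z|>M$. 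There is no real obstacle in this argument: the only nontrivial ingredient is the validity of the Gehring-type self-improvement in this $L^{1}$ form, which is a classical result and applies verbatim since the Lebesgue measure on $\R^{n}$ is a doubling measure.
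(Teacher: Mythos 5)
Your argument is exactly the paper's: the paper derives Corollary~\ref{cc2} from Corollary~\ref{cc1} by a direct application of Gehring's lemma, and you have simply written out the substitution $h=g^{\alpha}$, $q=1/\alpha$ that puts Corollary~\ref{cc1} in the standard reverse--H\"older form and then translated the Gehring conclusion back, which is correct. Nothing to add.
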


\section{\bf Almost $\mathcal{A}$-harmonicity}
\noindent
In this section we recall a generalization of the $\A$-harmonic approximation Lemma in Orlicz space (see \cite{DLSV}). 

\noindent
We say that $\A= (\A_{ij}^{\alpha \beta})_{\overset{i,j=1, \cdots, N}{ \alpha, \beta=1, \cdots, n}}$ is strongly elliptic in the sense of Legendre- Hadamard if 
$$
\A(a\otimes b, a\otimes b)\geq k_{\A} |a|^{2} |b|^{2} 
$$
holds for all $a\in \R^{N}, b \in \R^{n}$ for some constant $k_{\A}>0$. 
We say that a Sobolev function $w$ on $\B_{R}$ is $\mathcal{A}$-harmonic if
$$
-\dive(\mathcal{A} Dw)=0
$$
is satisfied in the sense of distributions. \\
Given a function $u\in W^{1,2}(\B_{R})$, we want to find a function $h$ that is $\mathcal{A}$-harmonic and is close to $u$. In particular, we are looking for a function $h\in W^{1,2}(\B_{R})$ such that
\begin{equation*}
\left\{
\begin{array}{ll}
-\dive(\mathcal{A} Dh)=0 &\mbox{ in } \B_{R} \\
h=u &\mbox{ on } \partial \B_{R}
\end{array}.
\right.
\end{equation*}
Let $w:=h-u$, then $w$ satisfies 
\begin{equation}\label{aarmonica}
\left\{
\begin{array}{ll}
-\dive(\mathcal{A} Dw)=-\dive(\mathcal{A} Du) &\mbox{ in } \B_{R} \\
w=0 &\mbox{ on } \partial \B_{R}
\end{array}.
\right.
\end{equation}
We recall Theorem 14 in \cite{DLSV}:
\begin{thm}\label{14}
Let $\B_{R}\Subset \Omega$ and let $\tilde{\B}\subset \Omega$ denote either $\B_{R}$ or $\B_{2R}$. Let $\A$ be strongly elliptic in the sense of Legendre-Hadamard. Let $\psi$ be an $N$-function with $\Delta_{2}(\psi, \psi^{*})<\infty$ and let $s>1$. Then for every $\varepsilon >0$, there exists $\delta >0$ depending on $n, N, k_{\A}, |\A|,\Delta_{2}(\psi, \psi^{*}) $ and $s$ such that the following holds: let $u\in W^{1, \psi}(\tilde{\B}) $ be almost $\A$-harmonic on $\B_{R}$ in the sense that
\begin{equation*}
\Bigl|	\-int_{\B_{R}} (\A Du, D\xi) \, dx\Bigr| \leq \delta \-int_{\tilde{\B}} |Du|\, dx \|D\xi\|_{L^{\infty}(\B_{R})}
\end{equation*}
for all $\xi \in C^{\infty}_{0}(\B_{R})$. Then the unique solution $w\in W^{1, \psi}_{0}(\B_{R})$ of (\ref{aarmonica}) satisfies 
\begin{equation*}
\-int_{\B_{R}} \psi \Bigl( \frac{|w|}{R}\Bigr) \, dx + \-int_{\B_{R}} \psi(|Dw|)\, dx \leq \varepsilon \Bigl[\Bigl(\-int_{\B_{R}} \psi^{s}(|Du|)\, dx \Bigr)^{\frac{1}{s}} + \-int_{\tilde{\B}} \psi(|Du|) \, dx \Bigr]. 
\end{equation*}
\end{thm}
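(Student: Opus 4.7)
Existence and uniqueness of $w\in W^{1,\psi}_0(\B_R)$ solving \eqref{aarmonica} follow in a standard way: the bilinear form $(Dw,D\varphi)\mapsto \int_{\B_R}(\A Dw,D\varphi)\,dx$ is coercive on $W^{1,2}_0$ by the Legendre--Hadamard condition together with Gårding's inequality (or directly from the validity of Korn/Gårding for constant coefficient systems on balls), and the right--hand side $-\dive(\A Du)\in W^{-1,\psi}(\B_R)$ defines a continuous linear functional on $W^{1,\psi^*}_0(\B_R)$. Hence one obtains $w$ first in $W^{1,2}_0$ and then upgrades integrability to $W^{1,\psi}_0$ using Calderón--Zygmund estimates in Orlicz spaces, which are available since $\Delta_2(\psi,\psi^*)<\infty$.

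The core of the argument is a Lipschitz truncation strategy, which is the only mechanism capable of bridging between the $L^\infty$--type control on test functions in the hypothesis and the Orlicz--type control required in the conclusion. The plan is as follows. First, extend $w$ by zero outside $\B_R$ and, for $\lambda>0$, construct the Lipschitz truncation $w_\lambda$ with $\|Dw_\lambda\|_{L^\infty(\R^n)}\le c\lambda$, $w_\lambda=w$ on the good set $\{M(Dw)\le\lambda\}$, and $|\{w_\lambda\ne w\}|\le c\lambda^{-1}\int_{\B_R}|Dw|\,dx$ (with improved Orlicz estimates that replace this bound by the sublevel estimate for $\psi(|Dw|)$). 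Then test \eqref{aarmonica} with $w_\lambda$: from the Legendre--Hadamard ellipticity one extracts a lower bound
\begin{equation*}
\int_{\{M(Dw)\le\lambda\}}|Dw|^2\,dx \le c\int_{\B_R}(\A Dw,Dw_\lambda)\,dx + (\text{boundary/bad set terms}),
\end{equation*}
while the right--hand side $\int(\A Du,Dw_\lambda)\,dx$ is controlled via the almost $\A$--harmonicity by $\delta\,\|D w_\lambda\|_\infty \-int_{\tilde\B}|Du|\,dx \le c\delta\lambda \-int_{\tilde\B}|Du|\,dx$.

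The delicate part, and the main obstacle, is the bookkeeping of the bad set $\{w_\lambda\ne w\}$ in the Orlicz scale. One has to choose $\lambda=\lambda(k)$ along a layer cake decomposition and sum in a way that exchanges the $L^\infty$--norm of $Dw_\lambda$ (which costs $\lambda$) against the measure of the bad set (which gains $\lambda^{-1}$ times an Orlicz average of $Dw$ or $Du$). The higher integrability exponent $s>1$ enters here: it absorbs the weakly Orlicz quantities $\psi^s(|Du|)$ appearing from the Calderón--Zygmund estimate used to transfer the bad-set contribution back to $u$, and from an application of Hölder's inequality in Orlicz spaces. After optimizing $\lambda$, the estimate for $\int_{\B_R}\psi(|Dw|)$ takes the desired form with a constant $\varepsilon(\delta)\to 0$ as $\delta\to 0$, with quantitative dependence only on $n,N,k_\A,|\A|,\Delta_2(\psi,\psi^*),s$. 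The companion estimate for $\-int_{\B_R}\psi(|w|/R)\,dx$ then follows by Poincaré's inequality in Orlicz form (Theorem~\ref{SP}).

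Alternatively, and perhaps more transparently, one can argue by contradiction: if the conclusion fails there exist $\varepsilon_0>0$ and sequences $\delta_k\downarrow 0$, $u_k$ almost $\A$--harmonic with parameter $\delta_k$ and with $\-int_{\tilde\B}\psi(|Du_k|)\,dx+(\-int_{\B_R}\psi^s(|Du_k|)\,dx)^{1/s}=1$, such that the solutions $w_k$ of \eqref{aarmonica} violate the bound with $\varepsilon_0$. Passing to a weak limit $u_\infty$ in $W^{1,\psi}(\tilde\B)$ and using that $\A$ is constant, the almost $\A$--harmonic condition in the limit gives $\dive(\A Du_\infty)=0$ in $\B_R$, so the corresponding $w_\infty\equiv 0$; strong convergence of $w_k\to 0$ in $W^{1,\psi}(\B_R)$ then contradicts the choice of $\varepsilon_0$. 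The compactness step is precisely where the higher integrability exponent $s>1$ is needed, to upgrade weak to strong convergence in the Orlicz modular; this is, again, where the Lipschitz truncation tool of \cite{DLSV} is invoked.
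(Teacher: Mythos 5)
The paper does not prove this theorem: it is quoted verbatim as Theorem~14 from \cite{DLSV}, so there is no in-text proof to compare against. Your sketch invokes the same Lipschitz-truncation strategy that \cite{DLSV} uses, so the overall route is the intended one, but as written it contains a genuine gap in the coercivity step. You write
\begin{equation*}
\int_{\{M(Dw)\le\lambda\}}|Dw|^2\,dx \le c\int_{\B_R}(\A Dw,Dw_\lambda)\,dx + (\text{bad set terms}),
\end{equation*}
attributing it to the Legendre--Hadamard condition. That condition only gives the \emph{global} Plancherel lower bound $\int(\A D\xi,D\xi)\,dx\ge k_{\A}\|D\xi\|_{L^2}^2$ for $\xi\in W^{1,2}_0$; it is not a pointwise ellipticity and does not let you extract a restricted $L^2$-norm of $Dw$ from the mixed pairing $(\A Dw, Dw_\lambda)$. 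The correct decomposition is $\int(\A Dw,Dw_\lambda)=\int(\A Dw_\lambda,Dw_\lambda)+\int(\A D(w-w_\lambda),Dw_\lambda)$: the first term is where Legendre--Hadamard applies (using that $w_\lambda\in W^{1,2}_0$), and the second is supported on the bad set. As stated, your inequality is not justified and would not survive inspection.

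Two further points are worth flagging. First, applying the almost-$\A$-harmonicity hypothesis with $\xi=w_\lambda$ and applying Legendre--Hadamard coercivity both require $w_\lambda$ to vanish on $\partial\B_R$; the Lipschitz truncation does not automatically preserve the zero trace, and one needs the ``zero boundary values'' variant of the truncation lemma. You do not address this. Second, your contradiction alternative is correctly framed, but the weak-to-strong upgrade for $Dw_k$ in the Orlicz modular is precisely the content of the theorem and is not a soft compactness fact; deferring it to ``where the Lipschitz truncation is invoked'' means the contradiction route offers no shortcut over the constructive argument. Also note a small misstatement: $-\dive(\A Du)$ with $Du\in L^{\psi}$ defines a functional on $W^{1,\psi^*}_0$, i.e.\ it lies in $(W^{1,\psi^*}_0)^*$, not in $W^{-1,\psi}=(W^{1,\psi}_0)^*$ in the paper's notation.
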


\noindent
The following results can be found in \cite{DLSV}.
\begin{lem}\label{dks}
Let $\B_{R}\subset \R^{n}$ be a ball and let $u\in W^{1, \V}(\B_{R})$. Then
$$
\-int_{\B_{R}} |V(Du) - (V(Du))_{\B_{R}}|^{2} dx \sim \-int_{\B_{R}} |V(Du) - V((Du)_{\B_{R}})|^{2} dx.  
$$
\end{lem}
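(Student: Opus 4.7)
The plan is to show the two inequalities hidden in $\sim$ separately. The direction
\[
\-int_{\B_{R}} |V(Du) - (V(Du))_{\B_{R}}|^{2}\, dx \leq \-int_{\B_{R}} |V(Du) - V((Du)_{\B_{R}})|^{2}\, dx
\]
is immediate from the minimizing property of the average: the strictly convex map $c \mapsto \-int_{\B_{R}} |V(Du) - c|^{2}\, dx$ on $\R^{Nn}$ attains its minimum precisely at $c = (V(Du))_{\B_{R}}$, so testing with the admissible competitor $c = V((Du)_{\B_{R}})$ yields this bound with constant $1$.

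The substantive direction is the reverse. Set $a := (Du)_{\B_{R}}$, $X := \-int_{\B_{R}} |V(Du) - V(a)|^{2}\, dx$, and $Y := \-int_{\B_{R}} |V(Du) - (V(Du))_{\B_{R}}|^{2}\, dx$. Lemma \ref{lem4} gives $X \sim \-int_{\B_{R}} \V_{|a|}(|Du - a|)\, dx$. Since $a$ is the mean of $Du$, I rewrite $Du(x) - a = \-int_{y} (Du(x) - Du(y))\, dy$ and use the monotonicity and convexity of the $N$-function $\V_{|a|}$ combined with Jensen's inequality to get
\[
\V_{|a|}(|Du(x) - a|) \leq \-int_{y} \V_{|a|}(|Du(x) - Du(y)|)\, dy.
\]
To trade the shift $|a|$ for the pointwise shift $|Du(x)|$, I invoke the change-of-shift Lemma \ref{cambio}: for every $\eta \in (0,1]$,
\[
\V_{|a|}(t) \leq C\, \eta^{1-\bar{p}'} \V_{|Du(x)|}(t) + \eta\, |V(a) - V(Du(x))|^{2}.
\]

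Setting $t = |Du(x) - Du(y)|$, integrating in $y$, using the other equivalence of Lemma \ref{lem4} (namely $\V_{|Du(x)|}(|Du(x) - Du(y)|) \sim |V(Du(x)) - V(Du(y))|^{2}$), and finally integrating in $x$ together with the standard identity
\[
\-int_{x} \-int_{y} |g(x) - g(y)|^{2}\, dy\, dx = 2 \-int_{\B_{R}} |g - (g)_{\B_{R}}|^{2}\, dx
\]
applied to $g = V(Du)$, I arrive at an estimate of the form $X \leq C\, \eta^{1-\bar{p}'}\, Y + C \eta\, X$. Choosing $\eta$ small enough that $C \eta < \tfrac{1}{2}$ allows absorption of the residual term into the left-hand side, producing $X \leq C Y$ with a constant depending only on the characteristics of $\V$. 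The main obstacle is quantitative rather than conceptual: correctly tracking the multiplicative constants produced by the two equivalences in Lemma \ref{lem4} and by the shift-change estimate, so that the term $C\eta X$ can be absorbed on the left. Once the double-integral representation of the variance is in place, the argument reduces to one application of Jensen's inequality followed by a single shift change.
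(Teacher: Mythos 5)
Your proof is correct. Note that the paper does not give its own proof of this lemma, deferring instead to \cite{DLSV} (which in turn relies on the shift-change technique from \cite{DKS}, the very source of Lemma~\ref{cambio}); your argument, using the double-integral representation of the variance, Jensen's inequality for the convex $N$-function $\V_{|a|}$, a single application of the shift-change Lemma~\ref{cambio}, Lemma~\ref{lem4} to convert between $\V_{|\cdot|}$ and $|V(\cdot)-V(\cdot)|^2$, and absorption in $\eta$, is precisely the standard route in that literature. One small remark: the constants in the absorption step ($c_1$ from Lemma~\ref{lem4} on the left and $C, c_2$ on the right) all depend only on $\Delta_2(\V,\V^*)$, so the choice of $\eta$ is admissible and the final constant is as claimed; it would be worth stating this explicitly, since you flag the constant-tracking as the main obstacle without closing the loop.
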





\begin{lem}\label{thmA6}
Let $z:= (Du)_{\B_{2R}}$. For all $\varepsilon>0$ there exists $\delta>0$ such that for every $u\in W^{1, \V}(\Omega)$ minimizer of $\mathcal{F}$ and every $\B_{R}$ such that $\B_{2R}\Subset \Omega$, and for
\begin{align*}
\-int_{\B_{2R}} |V(Du)- (V(Du))_{\B_{2R}}|^{2} dx \leq \delta \-int_{\B_{2R}} |V(Du)|^{2} dx
\end{align*}
it holds
\begin{align}\label{hp5}
\Bigl| \-int_{\B_{R}} D^{2}f(z)& (Du-z, D\xi) \, dx \Bigr| \leq \varepsilon \V''(|z|) \-int_{\B_{2R}} |Du-z| \, dx \|D\xi \|_{L^{\infty}(\B_{R})},
\end{align}
for every $\xi \in C^{\infty}_{c}(\B_{R})$. 
\end{lem}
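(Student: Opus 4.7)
The starting point is the Euler--Lagrange equation. Since $f\in C^1(\R^{Nn})$ and $u$ is a minimizer of $\F$, the first variation yields $\-int_{\B_R}(Df(Du),D\xi)\,dx=0$ for every $\xi\in C^\infty_c(\B_R)$. Because $Df(z)$ is constant in $x$, we may subtract $\-int_{\B_R}(Df(z),D\xi)\,dx=0$ and rewrite the left-hand side of \eqref{hp5} as
\[
\Bigl|\-int_{\B_R}\bigl(D^2f(z)(Du-z)-(Df(Du)-Df(z)),\,D\xi\bigr)\,dx\Bigr|.
\]
By the fundamental theorem of calculus the integrand equals
\[
-\int_0^1\bigl([D^2f(z+t(Du-z))-D^2f(z)](Du-z),\,D\xi\bigr)\,dt,
\]
so the task reduces to bounding
\[
\|D\xi\|_{L^\infty(\B_R)}\-int_{\B_R}\int_0^1\bigl|D^2f(z+t(Du-z))-D^2f(z)\bigr|\,|Du-z|\,dt\,dx
\]
by $\varepsilon\V''(|z|)\-int_{\B_{2R}}|Du-z|\,dx\,\|D\xi\|_{L^\infty(\B_R)}$.

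The natural split is $G:=\{x\in\B_R:\,|Du(x)-z|\le|z|/2\}$ (the ``good set'', on which $(\mathcal{H}5)$ applies with $z_2=z$ and $z_1=t(Du-z)$) and $B:=\B_R\setminus G$. On $G$ hypothesis $(\mathcal H5)$ gives, after integration in $t$,
\[
\int_0^1|D^2f(z+t(Du-z))-D^2f(z)|\,|Du-z|\,dt\le\frac{C}{1+\beta}\,\V''(|z|)\,|z|^{-\beta}|Du-z|^{1+\beta},
\]
while on $B$ the growth bound $(\mathcal H4)$, together with Lemma \ref{lem2} and the fact that $\V'(t)\sim t\V''(t)$, yields an estimate of the form $C\,\V''(|z|+|Du-z|)|Du-z|^2$, which by Lemma \ref{thmA2} is comparable to $C\V_{|z|}(|Du-z|)$.

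For the good set, I would rewrite the smallness hypothesis through Lemmas \ref{dks} and \ref{lem4} as
\[
\-int_{\B_{2R}}\V_{|z|}(|Du-z|)\,dx\le c\,\delta\,\V(|z|),
\]
which, using $\V_{|z|}(t)\sim\V''(|z|)t^2$ on $G$ and $\V(|z|)\sim\V''(|z|)|z|^2$, gives $\-int_G(|Du-z|/|z|)^2\,dx\le c\delta$. Raising via Gehring's Corollary \ref{cc2} to a slightly higher integrability exponent $2s>2$, one gets $\-int_G(|Du-z|/|z|)^{2s}\,dx\le c\delta^s$. The good-set contribution $\-int_G(|Du-z|/|z|)^\beta|Du-z|\,dx$ is then treated by H\"older's inequality with exponents $2s/\beta$ and $2s/(2s-\beta)$: the first factor yields $c\delta^{\beta/2}$ from the $L^{2s}$-bound above, and the second factor is interpolated between the $L^1$ average $\-int|Du-z|$ and the same $L^{2s}$ bound, producing altogether a factor of order $\delta^{\gamma_1}$ times $\-int_{\B_{2R}}|Du-z|\,dx$ for some $\gamma_1>0$.

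For the bad set I would exploit Chebyshev: $B\subset\{|Du-z|>|z|/2\}$ and, on this set, $\V_{|z|}(|Du-z|)\sim\V(|Du-z|)$ is bounded below, so $|B|/|\B_R|\le c\delta$ by the smallness hypothesis. Combining this with the Gehring higher integrability of $|V(Du)-V(z)|$ (Corollary \ref{cc2}) and H\"older's inequality, the bad-set contribution is also absorbed into a factor $\delta^{\gamma_2}\,\V''(|z|)\-int_{\B_{2R}}|Du-z|\,dx$ for some $\gamma_2>0$. Choosing $\delta$ so small that $c(\delta^{\gamma_1}+\delta^{\gamma_2})\le\varepsilon$ then completes the proof. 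The most delicate step is the good-set interpolation: the exponents in H\"older must be arranged so that the $L^{2s/(2s-\beta)}$ norm of $|Du-z|$ interpolates cleanly between the Gehring $L^{2s}$ bound (which brings the small power of $\delta$) and the $L^1$ quantity that appears on the right-hand side of \eqref{hp5}, with the $|z|$-factors cancelling against $\V''(|z|)$ and $\V(|z|)\sim\V''(|z|)|z|^2$.
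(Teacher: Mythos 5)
Your high-level strategy is the natural one and matches what the paper's proof (imported from \cite{DLSV}, where the corresponding lemma is proved; the present paper gives no proof of its own) does: the Euler--Lagrange identity $\-int_{\B_R}(Df(Du),D\xi)=0$ reduces the left-hand side to the $L^1$-norm of the Taylor remainder $Df(Du)-Df(z)-D^2f(z)(Du-z)$, and one then splits $\B_R$ into $G=\{|Du-z|\le|z|/2\}$, where $(\mathcal H5)$ gives the pointwise gain $C\V''(|z|)(|Du-z|/|z|)^\beta|Du-z|$, and its complement $B$, where $(\mathcal H4)$ and Lemma~\ref{lem2} give roughly $C\V''(|z|+|Du-z|)|Du-z|$. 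The translation of the smallness hypothesis into $\-int_{\B_{2R}}\V_{|z|}(|Du-z|)\lesssim\delta\V(|z|)$ (via Lemmas~\ref{dks}, \ref{lem4} and the elementary observation $|(V(Du))_{\B_{2R}}|^2\sim\V(|z|)$ once $\delta$ is small), the Chebyshev bound $|B|/|\B_R|\lesssim\delta$, and the use of Gehring (Corollary~\ref{cc2}) are all correct ingredients.

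However, the two crucial absorption steps are only asserted, and as described they do not close. On the good set you propose H\"older with exponents $2s/\beta$ and $p'=2s/(2s-\beta)$ and then interpolating $\|\,|Du-z|\,\|_{L^{p'}}$ between $L^1$ and $L^{2s}$. Writing this out, the $L^{2s}$ end gives $\|\,|Du-z|\,\|_{L^{2s}(G)}\lesssim\delta^{1/2}|z|$, so one lands on a bound of the shape
\[
c\,\delta^{\beta/2}\bigl(\-int_{\B_{2R}}|Du-z|\,dx\bigr)^{\theta}\bigl(\delta^{1/2}|z|\bigr)^{1-\theta},\qquad 0<\theta<1,
\]
which cannot be dominated by $\varepsilon\-int_{\B_{2R}}|Du-z|\,dx$ unless one has a lower bound for $\-int_{\B_{2R}}|Du-z|$ in terms of $|z|$, and no such bound is available. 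The same obstruction appears in your bad-set estimate: the combination of $|B|/|\B_R|\lesssim\delta$, Gehring and H\"older produces, after unwinding the equivalences $\V_{|z|}(|Du-z|)\sim\V''(|z|+|Du-z|)|Du-z|^2$ and $\V(|z|)\sim\V''(|z|)|z|^2$, a bound of the form $c\,\delta^{\gamma}\V''(|z|)\,|z|$ rather than $c\,\delta^{\gamma}\V''(|z|)\-int_{\B_{2R}}|Du-z|$, and the two are not comparable in the regime $\-int_{\B_{2R}}|Du-z|\ll|z|$, which the hypotheses do not preclude. So as written the argument proves the inequality only on the event $\-int_{\B_{2R}}|Du-z|\gtrsim\delta^{\gamma}|z|$, not unconditionally. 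Closing this gap requires additional care (for instance a finer splitting of the high-gradient region together with an interpolation that keeps exactly one power of $\-int_{\B_{2R}}|Du-z|$, or a Young-type inequality adapted to $\V_{|z|}$ that does not reintroduce $|z|$), and it is precisely at this point that the blind sketch stops short of a proof.
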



\section{\bf Excess decay estimate}

\noindent
Following the ideas in \cite{AF} we will prove the following Lemma. 

\begin{lem}\label{AF}
Let $z_{0} \in \R^{n}$ such that $|z_{0}|>1$. Let $f\in C^{2}(B_{2\sigma}(z_{0}))$ be strictly $W^{1, \V}$-quasiconvex at $z_{0}$, that is
\begin{align}\label{z0}
\int_{\B} [f(z_{0}+ D\xi) - f(z_{0})] \,dx \geq k \int_{\B} \V_{|z_{0}|}(|D\xi|) \, dx
\end{align} 
holds for all $\xi \in C^{1}_{c}(\B, \R^{N})$. Then, there exists $\rho>0$ such that for all $z\in \B_{\rho}(z_{0})$
\begin{align}\label{z1}
\int_{\B} [f(z+ D\xi) - f(z)] \,dx \geq \frac{k}{2} \int_{\B} \V_{|z_{0}|}(|D\xi|) \, dx
\end{align} 
holds for all $\xi \in C^{1}_{c}(\B, \R^{N})$. 
\end{lem}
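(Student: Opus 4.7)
The strategy is a second-order Taylor expansion that compares $\int_{\B} [f(z+D\xi) - f(z)]\,dx$ with $\int_{\B} [f(z_0 + D\xi) - f(z_0)]\,dx$, applies the hypothesis (\ref{z0}) to the latter, and absorbs the error through the growth and H\"older hypotheses $(\mathcal{H}4)$ and $(\mathcal{H}5)$ together with the $\Delta_2$-equivalences of Lemma \ref{phia}. Throughout, set $\eta := z - z_0$ and restrict to $\rho \leq |z_0|/4$; since $|z_0|>1$, this keeps us safely away from the origin where $D^2 f$ might fail to exist.

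\textit{Step 1 (Reduction to a Hessian-difference integral).} For $w \in \{z, z_0\}$, the identity
\[
f(w + D\xi) - f(w) = Df(w) \cdot D\xi + \int_0^1 (1-t)\,D^2 f(w + tD\xi)(D\xi, D\xi)\,dt
\]
combined with $\int_{\B} D\xi\,dx = 0$ (since $\xi$ is compactly supported in $\B$) kills the first-order term upon integration. Subtracting the two expressions yields
\[
\mathcal{R} := \int_{\B} \int_0^1 (1-t)\bigl[D^2 f(z + tD\xi) - D^2 f(z_0 + tD\xi)\bigr](D\xi, D\xi)\,dt\,dx,
\]
and in view of (\ref{z0}) it suffices to prove $|\mathcal{R}| \leq \tfrac{k}{2}\int_{\B} \V_{|z_0|}(|D\xi|)\,dx$.

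\textit{Step 2 (Splitting into regular and resonant regimes).} Split the integrand of $\mathcal{R}$ pointwise according to whether $|z_0 + tD\xi| \geq 2\rho$ (the \emph{regular} regime, where $|\eta|=\rho \leq |z_0+tD\xi|/2$ so that $(\mathcal{H}5)$ applies with $z_2 = z_0 + tD\xi$, $z_1 = \eta$) or $|z_0 + tD\xi| < 2\rho$ (the \emph{resonant} regime). In the regular regime,
\[
|D^2 f(z + tD\xi) - D^2 f(z_0 + tD\xi)| \leq C\,\V''(|z_0 + tD\xi|)\,|z_0 + tD\xi|^{-\beta}\,\rho^\beta,
\]
while in the resonant regime each Hessian is bounded separately by $|D^2 f(w)| \leq C\V''(|w|)$ via $(\mathcal{H}4)$.

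\textit{Step 3 (Integration and absorption).} In the regular regime, Lemma \ref{lem2} replaces $\int_0^1 \V''(|z_0+tD\xi|)\,dt$ with a multiple of $\V''(|z_0|+|D\xi|)$; the equivalence $\V''(|z_0|+s)\,s^2 \sim \V_{|z_0|}(s)$ from Lemma \ref{phia} then converts $\V''(|z_0|+|D\xi|)\,|D\xi|^2$ into $\V_{|z_0|}(|D\xi|)$, giving a clean $\rho^\beta$ in front and yielding a contribution of order $\rho^\beta \int_{\B}\V_{|z_0|}(|D\xi|)\,dx$. In the resonant regime the set $\{t\in[0,1]: |z_0+tD\xi|<2\rho\}$ is empty unless $|D\xi| \gtrsim |z_0|$ and has Lebesgue measure $\lesssim \rho/|D\xi|$; combining this smallness with $\V_{|z_0|}(|D\xi|)\sim\V(|D\xi|)$ for $|D\xi|\gtrsim |z_0|$ (Lemma \ref{phia}) and the $\Delta_2$-bounds on $\V$ and $\V''$ produces again an estimate of order $\rho^\beta \int_{\B}\V_{|z_0|}(|D\xi|)\,dx$. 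Choosing $\rho$ small enough that the resulting constant lies below $k/2$ proves (\ref{z1}).

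\textit{Main obstacle.} The delicate point is the resonant regime, where $(\mathcal{H}5)$ cannot be applied directly and $\V''(|z_0+tD\xi|)$ may be much larger than $\V''(|z_0|+|D\xi|)$. Making the estimate uniform in $\xi \in C^1_c(\B,\R^N)$ requires quantifying the small Lebesgue measure of the resonant $t$-set and then carefully balancing the factors of $\V''(\rho)$, $|D\xi|^2$, and the measure via the $\Delta_2$-equivalences of Lemma \ref{phia} so as to recover a contribution proportional to $\rho^\beta \int_{\B}\V_{|z_0|}(|D\xi|)\,dx$.
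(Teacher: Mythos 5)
Your strategy is genuinely different from the paper's, so let me first compare and then flag a gap.

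The paper splits the \emph{spatial} domain $\B$ into $\X=\{|D\xi|\leq\sigma/2\}$ and $\Y=\{|D\xi|>\sigma/2\}$. On $\X$, all points $z+\theta D\xi$, $z_0+\theta D\xi$ lie in $B_\sigma(z_0)$, so the (qualitative) modulus of continuity $\omega_\rho$ of $D^2f$ coming from the local $C^2$ hypothesis controls the Hessian difference, and the lower bound $\V''(|z_0|+|D\xi|)\geq c>0$ on $\X$ converts $|D\xi|^2$ into $\V_{|z_0|}(|D\xi|)$. On $\Y$, the mean value theorem in $z$ applied to $H(z,x)=f(z+D\xi)-f(z)-(Df(z),D\xi)$ produces a factor $\rho$ directly and closes using $(\mathcal{H}4)$ and Lemma~\ref{lem2}. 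The paper never invokes $(\mathcal{H}5)$ in this lemma. You instead split the \emph{$t$-integral} in the Taylor remainder according to whether $|z_0+tD\xi|\geq 2\rho$ and invoke $(\mathcal{H}5)$ in the regular regime and $(\mathcal{H}4)$ in the resonant one. What your approach buys is quantitative $\rho^\beta$-type decay (instead of only $\omega_\rho\to 0$), and it sidesteps the local $C^2$ modulus of continuity entirely; what it costs is the use of the stronger hypothesis $(\mathcal{H}5)$ and a considerably more delicate integral estimate.

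There is a genuine gap in your regular regime. You write that Lemma~\ref{lem2} ``replaces $\int_0^1\V''(|z_0+tD\xi|)\,dt$ with a multiple of $\V''(|z_0|+|D\xi|)$\ldots giving a clean $\rho^\beta$ in front.'' But $(\mathcal{H}5)$ gives the weighted bound $C\,\V''(|z_0+tD\xi|)\,|z_0+tD\xi|^{-\beta}\rho^\beta$, and Lemma~\ref{lem2} has no analogue with the extra weight $|z_0+tD\xi|^{-\beta}$. If you discard that weight by bounding $|z_0+tD\xi|^{-\beta}\rho^\beta\leq 2^{-\beta}$ on the set $\{|z_0+tD\xi|\geq 2\rho\}$ so as to apply Lemma~\ref{lem2} to what remains, the $\rho^\beta$ cancels and the regular-regime contribution is merely $O(1)\int_\B\V_{|z_0|}(|D\xi|)\,dx$, which cannot be absorbed by the quasiconvexity gap $k$. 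The smallness can be recovered, but only with additional work that your outline does not supply: either a direct estimate of the one-dimensional integral $\int_{2\rho}^{c(|z_0|+|D\xi|)}\V''(u)\,u^{-\beta}\,du$ (whose behavior changes according as $p_0\gtrless 1+\beta$, so the exponent of $\rho$ you get is not always $\beta$), or a threshold such as $\rho^{1/2}$ in place of $2\rho$ so that $|z_0+tD\xi|^{-\beta}\rho^\beta\leq\rho^{\beta/2}$ on the regular set and the resonant set remains controllably small. Without one of these refinements, the step ``giving a clean $\rho^\beta$ in front'' does not hold. The resonant regime, which you already flag as the delicate part, is in fact the more manageable of the two once one works with the integral $\int_{\{|z_0+tD\xi|<2\rho\}}\V''(|z_0+tD\xi|)\,dt\lesssim\V'(c\rho)/|D\xi|$ rather than with a pointwise supremum of $\V''$.

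Finally note that the paper's proof exploits precisely the $C^2(B_{2\sigma}(z_0))$ hypothesis that appears in the statement (through $\omega_\rho$), while your argument does not use it at all and relies on $(\mathcal{H}5)$ instead; this is allowed in context, but it is worth being aware that you are proving a version of the lemma under different hypotheses than those highlighted in its statement.
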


\begin{proof}
Let 
$$
\omega_{\rho}:= \sup \{|D^{2}f(z_{1})- D^{2}f(z_{2})| : z_{1}, z_{2} \in \B_{\sigma}(z_{0}), |z_{1}-z_{2}|<\rho	\}
$$
and fix $z$ such that $|z-z_{0}|<\rho <\frac{\sigma}{2}$. \\
For $\eta \in \R^{Nn}$, define
$$
G(\eta)= f(z+\eta) - f(z_{0}+\eta). 
$$
By using (\ref{z0}) we have
\begin{align*}
&\int_{\B} [f(z+D\xi) -f(z)] \, dx =\\
& =\int_{\B} [f(z_{0}+D\xi) - f(z_{0})] \, dx +\int_{\B} [f(z+D\xi) - f(z_{0}+ D\xi) + f(z_{0}) - f(z)] \, dx \\
&\geq k \int_{\B} \V_{|z_{0}|}(|D\xi|) \, dx + \int_{\B} [G(D\xi) - G(0)- (DG(0),D\xi)]\, dx. 
\end{align*}
Now we split $\B$ as 
\begin{align*}
\X =\Bigl\{ x\in \B : |D\xi|\leq \frac{\sigma}{2}\Bigr \} \, \mbox{ and } \, \Y =\Bigl\{ x\in \B : |D\xi|> \frac{\sigma}{2}\Bigr\}. 
\end{align*}
Let us observe that 
$$
G(D\xi) - G(0)- (DG(0),D\xi) = \frac{1}{2} (D^{2}G(\theta D\xi)D\xi,D\xi)
$$ 
with $\theta \in (0,1)$. Moreover if $x\in \X$ then $|D\xi|\leq \frac{\sigma}{2}$, so $z+D\xi \in \B_{\sigma}(z_{0})$. Hence
\begin{align*}
\int_{\X} [G(D\xi) - G(0)- DG(0)D\xi] \, dx &= \frac{1}{2} \int_{\X} (D^{2} G(\theta D\xi) D\xi, D\xi) \, dx \\
&\geq -\frac{1}{2} \int_{\X} |D^{2}f(z+\theta D\xi) - D^{2}f(z_{0}+\theta D\xi)| |D\xi|^{2} dx \\
&\geq -\frac{\omega_{\rho}}{2} \int_{\X} |D\xi|^{2} dx \\
&\geq -\frac{c\, \omega_{\rho}}{2} \int_{\X} \V''(|z_{0}|+|D\xi|)|D\xi|^{2} dx \\  
&\geq -\frac{c\, \omega_{\rho}}{2} \int_{\X} \V_{|z_{0}|}(|D\xi|) dx 
\end{align*}
where we used the fact that on $\X$ we have
\begin{align*}
\V''(|z_{0}|+|D\xi|) \geq c \frac{\V'(|z_{0}|+|D\xi|)}{|z_{0}|+|D\xi|} \geq c \frac{\V'(1)}{|z_{0}|+\frac{\sigma}{2}}>0.
\end{align*}
Let us define $\displaystyle{H(z, x)= f(z + D\xi(x)) - f(z)- (Df(z), D\xi(x))}$
so that 
$$
\int_{\Y} [H(z, x) - H(z_{0}, x)] \, dx = \int_{\Y} [G(D\xi) - G(0)- (DG(0), D\xi)]\, dx.
$$
We can see that
\begin{align*}
\int_{\Y} &|H(z, x)- H(z_{0}, x)| \, dx \leq \int_{\Y} |z-z_{0}| |D_{z}H(\tau, x)| \, dx \\
&\leq \rho \left[\int_{\Y} |Df(\tau+D\xi) - Df(\tau)|\, dx	+\int_{\Y} |D^{2}f(\tau)||D\xi|\, dx \right] =\rho [\mathcal{I}+ \mathcal{II}]. 
\end{align*}
Now we estimate $\mathcal{I}$. We use hypothesis $(\mathcal{H}4)$, Lemma \ref{lem2} and the fact that $|\tau|+ |D\xi +\tau|\sim |\tau|+ |D\xi|$ to get
\begin{align*}
\mathcal{I}&\leq \int_{\Y} \int_{0}^{1} |D^{2} f(\tau +t D\xi)||D\xi| \, dt dx \\
&\leq c \int_{\Y} \int_{0}^{1} \V''(|\tau + tD\xi|) |D\xi| \, dt dx \\
&\leq c \int_{\Y} \frac{\V'(|\tau|+ |D\xi|)}{|\tau|+ |D\xi|} |D\xi| \, dx\\
&\leq c_{\sigma} \int_{\Y} \V'(|z_{0}|+|D\xi|) |D\xi|\, dx
\end{align*}
where in the last inequality we used $|\tau|+|D\xi|\leq |z|+|z_{0}|+|D\xi|\leq \rho +2|z_{0}|+|D\xi|<c(|z_{0}|+ |D\xi|)$ as well as $|\tau|+|D\xi|>1+\frac{\sigma}{2}=:c_{\sigma}$ on $\Y$, if $\rho$ is small enough.

\noindent
Analogously, we estimate $\mathcal{II}$: 
\begin{align*}
\mathcal{II}&\leq c\int_{\Y} \V''(|\tau|) |D\xi| \, dx \\
&\leq c \int_{\Y} \V'(|z_{0}|+ |D\xi| ) |D\xi| \, dx
\end{align*}
since $|\tau|\leq |z|+|z_{0}|\leq c (|z_{0}|+|D\xi|)$.\\ 
On the other hand, since on $\Y$
\begin{align*}
\V'(|z_{0}|+|D\xi|)|D\xi| &\leq c \V''(|z_{0}|+|D\xi|)(|z_{0}|+|D\xi|)|D\xi| \\
&\leq c(|z_{0}|, \sigma) \V''(|z_{0}|+ |D\xi|) |D\xi|^{2} \\
&\leq c(|z_{0}|, \sigma) \V_{|z_{0}|}(|D\xi|),
\end{align*}
we can say that 
\begin{align*}
\int_{\Y}[G(D\xi)-G(0)-(DG(0), D\xi)]\, dx \geq -\tilde{c} \rho \int_{\Y} \V_{|z_{0}|}(|D\xi|)\, dx
\end{align*}
where $\tilde{c}$ depends on the characteristics of $\V$, $\sigma$ and $|z_{0}|$.
Choosing $\rho$ such that $\frac{c\, \omega_{\rho}}{2}+ \tilde{c}\rho<\frac{k}{2}$ we have the result.

\end{proof}

\noindent
In the sequel we assume that 
$z_{0}\in \R^{n}$, with $|z_{0}|>M+1$, so that  (\ref{z1}) holds in $\B_{\rho}(z_{0})$ with $\rho <1$. 

\noindent
We define the excess function
$$
\E(\B_{R}(x_{0}),u)=\-int_{\B_{R}(x_{0})} |V(Du)-(V(Du))_{\B_{R}(x_{0})}|^{2} dx.
$$
The main ingredient to prove our regularity result is the following decay estimate: 

\begin{prop}\label{tauB}
For all $\varepsilon>0$ there exists $\delta= \delta(\varepsilon, \V)>0$ and $\beta \in (0, 1)$, such that, if $u$ is a minimizer and if for some ball $\B_{R}(x_{0})$ with $\B_{2R}(x_{0})\Subset \Omega$ the following estimates
\begin{align}\label{iii}
\E(\B_{2R}(x_{0}),u) \leq \delta \-int_{\B_{2R}(x_{0})} |V(Du)|^{2} dx,  \qquad  |(Du)_{\B_{2R}(x_{0})} - z_{0}|<\rho
\end{align}
hold true, then for every $\tau \in (0,\frac{1}{2}]$ 
\begin{equation*}
\E(\B_{\tau R}(x_{0}), u)\leq C \tau^{\beta}(\varepsilon \tau^{-n-1}+1) \E(\B_{2R}(x_{0}),u)
\end{equation*}
where $C=C(\V,n)$ and it is independent of $\varepsilon$.
\end{prop}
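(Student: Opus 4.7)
My plan is to linearize around $z:=(Du)_{\B_{2R}(x_0)}$ and invoke the Orlicz $\A$-harmonic approximation of Theorem~\ref{14}, combined with interior estimates for linear elliptic systems. By~\eqref{iii} we have $|z-z_0|<\rho$, so Lemma~\ref{AF} transfers the strict $W^{1,\V}$-quasiconvexity from $z_0$ to $z$ with constant $k/2$. Together with $(\mathcal{H}4)$ this ensures that the normalized Hessian
\[
\A:=\frac{1}{\V''(|z|)}\,D^2 f(z)
\]
is bounded and strongly Legendre--Hadamard elliptic with constants depending only on $k$ and the characteristics of $\V$, and in particular independent of $z$ for $|z|>M+1$.

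Set $q(x):=(u)_{\B_{2R}}+z(x-x_0)$ and $w:=u-q$, so that $Du=z+Dw$ and $(Dw)_{\B_{2R}}=0$. The first inequality in~\eqref{iii} is the smallness hypothesis of Lemma~\ref{thmA6}; after dividing by $\V''(|z|)$ it yields the almost $\A$-harmonicity
\[
\Bigl|\-int_{\B_R}(\A\, Dw,D\xi)\,dx\Bigr|\le \eta\-int_{\B_{2R}}|Dw|\,dx\,\|D\xi\|_{L^\infty(\B_R)},\qquad\xi\in C^\infty_c(\B_R),
\]
for any prescribed $\eta>0$ provided $\delta=\delta(\eta)$ is chosen small enough. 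Applying Theorem~\ref{14} with $\psi=\V_{|z|}$ (which lies in $\Delta_2$ uniformly in $z$ by Lemma~\ref{phia}) and the higher-integrability exponent $s>1$ of Corollary~\ref{cc2}, I obtain an $\A$-harmonic $h$ with $h=w$ on $\partial\B_R$ and
\[
\-int_{\B_R}\V_{|z|}(|Dw-Dh|)\,dx\le C\,\varepsilon_1\,\E(\B_{2R},u),
\]
where the right-hand side is identified with (a multiple of) the excess via Lemma~\ref{lem4} and Corollary~\ref{cc2}.

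Since $\A$ has constant coefficients, classical $L^2$-theory for linear elliptic systems gives the interior estimate
\[
R^2\sup_{\B_{R/2}}|D^2 h|^2+\sup_{\B_{R/2}}|Dh|^2\le C\-int_{\B_R}|Dh|^2\,dx.
\]
Translating this into the shifted $N$-function $\V_{|z|}$ via Lemmas~\ref{lem4} and~\ref{thmA2}, and exploiting the $(\bar p_0,\bar p_1)$-structure from Lemma~\ref{p0p1}, I derive the Campanato-type decay
\[
\-int_{\B_{\tau R}}\V_{|z|}(|Dh-(Dh)_{\B_{\tau R}}|)\,dx\le C\,\tau^\beta\,\E(\B_{2R},u)
\]
for some $\beta\in(0,1)$ depending only on the characteristics of $\V$.

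Assembling the two pieces with $\bar z:=z+(Dh)_{\B_{\tau R}}$, the $\Delta_2$-property of $\V_{|z|}$ and Lemma~\ref{lem4} yield
\begin{align*}
\E(\B_{\tau R},u)
&\le C\-int_{\B_{\tau R}}\V_{|z|}(|Du-\bar z|)\,dx\\
&\le C\-int_{\B_{\tau R}}\V_{|z|}(|Dw-Dh|)\,dx+C\-int_{\B_{\tau R}}\V_{|z|}(|Dh-(Dh)_{\B_{\tau R}}|)\,dx.
\end{align*}
The second term is bounded by $C\tau^\beta\E(\B_{2R},u)$ by the Campanato step. For the first, enlarging the domain costs a factor $\tau^{-n}$, giving a contribution $\le C\varepsilon_1\tau^{-n}\E(\B_{2R},u)$; setting $\varepsilon_1=\varepsilon$ (which fixes $\delta$) and using $\tau\le 1/2$ with $\beta<1$ to dominate $\tau^{-n}$ by $\tau^{\beta-n-1}=\tau^\beta\cdot\tau^{-n-1}$ produces the stated form $C\tau^\beta(\varepsilon\tau^{-n-1}+1)\E(\B_{2R},u)$. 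The main technical obstacle is the Campanato step itself: extracting an explicit algebraic rate $\tau^\beta$ for the $\V_{|z|}$-mean oscillation of $Dh$, uniformly in $|z|>M+1$, requires a delicate transfer of the $L^\infty$-control of $Dh$ and $D^2 h$ into the shifted Orlicz scale by means of Lemmas~\ref{thmA2}, \ref{lem4} and~\ref{cambio}, taking care that the resulting constants do not blow up as $|z|\to\infty$.
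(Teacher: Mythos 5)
Your scaffolding matches the paper's proof: ellipticity of $\A=D^{2}f(z)/\V''(|z|)$ via Lemma~\ref{AF}, almost $\A$-harmonicity via Lemma~\ref{thmA6}, the approximation estimate via Theorem~\ref{14} together with Corollary~\ref{cc2} and Lemma~\ref{dks}, interior estimates for the constant-coefficient $\A$-harmonic function $h$, and the split $|Du-\bar z|\le|Dw-Dh|+|Dh-(Dh)_{\B_{\tau R}}|$ with $\bar z=z+(Dh)_{\B_{\tau R}}$. However, there is a genuine gap exactly where the exponent $\beta<1$ must appear. In your assembly step you assert
\begin{equation*}
\E(\B_{\tau R},u)\le C\-int_{\B_{\tau R}}\V_{|z|}(|Du-\bar z|)\,dx,
\end{equation*}
invoking Lemma~\ref{lem4}. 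But Lemma~\ref{lem4} gives $|V(Du)-V(\bar z)|^{2}\sim\V_{|\bar z|}(|Du-\bar z|)$, with the shift taken at $|\bar z|$ (equivalently $|Du|$), \emph{not} at $|z|$. Passing from the shift parameter $|\bar z|$ to $|z|$ is not free: it is exactly the role of Lemma~\ref{cambio}, which, for each $\eta\in(0,1]$, trades $\V_{|\bar z|}(t)$ for $C_{\eta}\,\V_{|z|}(t)+\eta\,|V(\bar z)-V(z)|^{2}$ at the cost of a constant $C_{\eta}$ blowing up as $\eta\to0$. The paper therefore produces two terms $\mathcal{I}+\mathcal{II}$: the interior Campanato estimate for $Dh$ together with $\V_{a}(st)\le cs\,\V_{a}(t)$, $s\in[0,1]$, gives a factor $\tau$ (not $\tau^{\beta}$) on $\mathcal{I}$, while $\mathcal{II}$ carries the prefactor $\eta$; the final rate $\tau^{\beta}$ with $\beta=\min\{\alpha,\,1-\alpha(\bar p-1)\}$ arises from \emph{balancing} the penalty $C_{\eta}$ against the gain $\tau$ by choosing $\eta=\tau^{\alpha}$. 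Your plan never performs this change of shift, so it gives no account of where the degradation from $\tau$ to $\tau^{\beta}$ comes from; moreover you explicitly defer the Campanato computation as ``the main technical obstacle'' without carrying it out. As written, the argument does not close. (A possible repair, also not in the paper, would be to show that \eqref{iii} with $\delta$ small forces $|(Dh)_{\B_{\tau R}}|\le\tfrac12|z|$ and hence $\V_{|\bar z|}\sim\V_{|z|}$ with universal constants; but this quantitative step is likewise absent from your proposal.)
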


\begin{proof}
Let $q$ be a linear function such that $(u-q)_{\B_{2R}}=0$ and $z:=Dq = (Du)_{\B_{2R}}$. Let $w:=u-q$. Fix $\varepsilon >0$ and $\delta$ as in Lemma \ref{thmA6}, then $w$ is almost $\mathcal{A}$-harmonic with $\mathcal{A}= \frac{D^{2}f(z)}{\V''(|z|)}$. Let us observe that by Lemma \ref{AF} such $\mathcal{A}$ is strongly elliptic in the sense of Legendre-Hadamard, since for every $a\in \R^{N}$ and $b\in \R^{n}$
$$
\frac{D^{2}f(z)}{\V''(|z|)} (a\otimes b, a\otimes b) \geq \frac{\V''(|z_{0}|)}{\V''(|z|)} |a|^{2}|b|^{2}\geq c |a|^{2}|b|^{2} 
$$ 
for $z_{0}\in \R^{n}$ with $|z_{0}|>1$ and $z$ such that $|z-z_{0}|<\rho$, where c depends on $z_{0}$, $\rho$ and $\V$. 

\noindent
Let $h$ be the $\mathcal{A}$-harmonic approximation of $w$ with $h=w$ on $\partial \B_{R}$. At this point we can apply Theorem \ref{14} and conclude that, for $|z|>M$, $h$ satisfies
\begin{equation*}
\-int_{\B_{R}} \V_{|z|}(|Dw-Dh|) \,dx \leq \varepsilon \Bigl[ \Bigl(\-int_{\B_{R}} \V^{s}_{|z|}(|Du-z|)\, dx \Bigr)^{\frac{1}{s}} + \-int_{\B_{2R}}  \V_{|z|}(|Du-z|)\, dx \Bigr]
\end{equation*}
where $s$ is the same exponent of Corollary \ref{cc2}.  

\noindent
Applying Lemma \ref{lem4} and Corollary \ref{cc2} we have
\begin{align*}
\Bigl(\-int_{\B_{R}} \V^{s}_{|z|}(|Du-z|)\, dx \Bigr)^{\frac{1}{s}} &\leq c \Bigl(\-int_{\B_{R}} |V(Du)- V(z)|^{2s} dx	\Bigr)^{\frac{1}{s}} \\
&\leq c \-int_{\B_{2R}} |V(Du)- V(z)|^{2} dx
\end{align*}
from which, taking into account that $z=(Du)_{\B_{2R}}$ and using Lemma \ref{dks} we have
\begin{equation}\begin{split}\label{2b}
\-int_{\B_{R}} \V_{|z|}(|Dw-Dh|) \,dx & \leq \varepsilon c  \-int_{\B_{2R}} |V(Du)- V(z)|^{2} dx  \\
& \leq \varepsilon c  \-int_{\B_{2R}} |V(Du)- (V(Du))_{\B_{2R}}|^{2} dx  \\
& = \varepsilon c  \,\E(\B_{2R},u).
\end{split}\end{equation}
Now we want to compute $\E(\B_{\tau R},u)$. Applying Lemma \ref{dks}, Lemma \ref{lem4} and Lemma \ref{cambio} we get
\begin{align*}
\E(\B_{\tau R},u) &= \-int_{\B_{\tau R}} |V(Du) - (V(Du))_{\B_{\tau R}} |^{2} dx \\
&\leq c \-int_{\B_{\tau R}} |V(Du) - V((Dh)_{\B_{\tau R}}+z)|^{2} dx \\
&\leq c \-int_{\B_{\tau R}} \V_{|(Dh)_{\B_{\tau R}}+z|}(|Du- (Dh)_{\B_{\tau R}}-z|) \, dx \\
&= c \-int_{\B_{\tau R}} \V_{|(Dh)_{\B_{\tau R}}+z|}(|Dw- (Dh)_{ \B_{\tau R}}|) \, dx \\
&\leq C_{\eta} \-int_{\B_{\tau R}} \V_{|z|}(|Dw-(Dh)_{\B_{\tau R}}|) dx + \eta \-int_{\B_{\tau R}} |V((Dh)_{\B_{\tau R}} +z) - V(z)|^{2} dx\\
&= \mathcal{I}+\mathcal{II}.
\end{align*}

\noindent
Using Jensen's inequality, (\ref{2b}), the fact that
$$
\sup_{\B_{\tau R}} |Dh-(Dh)_{\B_{\tau R}}|\leq c \, \tau\,  \-int_{\B_{R}} |Dh- (Dh)_{\B_{R}}|\, dx
$$  
(see \cite{G}), the convexity of $\V$, and the $\Delta_{2}$-condition, we  have
\begin{align*}
\mathcal{I}&\leq C_{\eta} \-int_{\B_{\tau R}} \V_{|z|}(|Dw-Dh|) \, dx + C_{\eta} \-int_{\B_{\tau R}} \V_{|z|} (|Dh- (Dh)_{\B_{\tau R}}|)\, dx \\
& \leq C_{\eta} \tau^{-n} \varepsilon \E(\B_{2R},u) + C_{\eta} \V_{|z|}\Bigl(\tau \-int_{\B_{R}}|Dh-(Dh)_{\B_{R}}| \, dx\Bigr). 
\end{align*}
Taking into account that $\V_{a}(st)\leq c s \V_{a}(t)$ for all $a\geq 0$, $s\in [0,1]$ and $t\geq 0$, using Jensen inequality and (\ref{2b}) we have
\begin{align*}
\V_{|z|}\Bigl(&\tau \-int_{\B_{R}}|Dh-(Dh)_{\B_{R}}| \, dx\Bigr) \leq \\
&\leq c\, \tau \V_{|z|}\Bigl(\-int_{\B_{R}}|Dh-(Dh)_{\B_{R}}| \, dx\Bigr) \\
&\leq c\,\tau \V_{|z|}\Bigl(\-int_{\B_{R}}|Dh-Dw| \, dx + \-int_{\B_{R}} |Dw-(Dw)_{\B_{R}}| \, dx	\Bigr) \\
&\leq c\, \tau \V_{|z|}\Bigl(\-int_{\B_{R}}|Dh-Dw| \, dx \Bigr) + c\, \tau \V_{|z|}\Bigl(\-int_{\B_{R}}|Dw-(Dw)_{\B_{R}}| \, dx \Bigr) \\
&\leq c\, \tau \-int_{\B_{R}} \V_{|z|}(|Dh-Dw|) \, dx  +c\,  \tau \-int_{\B_{R}} \V_{|z|}(|Du-(Du)_{\B_{R}}|) \, dx  \\
&\leq c\,\tau \varepsilon \E(\B_{2R},u)  + c\, \tau \-int_{\B_{R}} \V_{|z|}(|Du-(Du)_{\B_{R}}|) \, dx  \\
&\leq c\,\tau \varepsilon \E(\B_{2R},u)  +  c\, \tau \E(\B_{2R},u)
\end{align*}
where in the last inequality we used
\begin{align*}
\-int_{\B_{R}} \V_{|z|}(|Du-(Du)_{\B_{R}}|)\, dx &\leq c\, \-int_{\B_{R}} \V_{|z|}(|Du-z|) \, dx + c\, \-int_{\B_{R}} \V_{|z|}(|z-(Du)_{\B_{R}}|)\, dx \\
&\leq c\E(\B_{2R},u) + c\, \V_{|z|} \Bigl(\Bigl|\-int_{\B_{R}} [Du-z]\, dx \Bigr|	\Bigr) \\
&\leq c \E(\B_{2R},u) + c\, \-int_{\B_{R}} \V_{|z|}(|Du-z|)\, dx\\
&\leq c\E(\B_{2R},u).
\end{align*}
So we have
\begin{align*}
\mathcal{I}
&\leq C_{\eta} \tau^{-n} \varepsilon \E(\B_{2R},u) + C_{\eta}  \tau \varepsilon \E(\B_{2R},u)  +  C_{\eta}\, \tau \E(\B_{2R},u).
\end{align*}
Now we estimate $\mathcal{II}$; taking into account that 
$$
\sup_{\B_{\tau R}} |Dh| \leq \-int_{\B_{R}} |Dh|\, dx,
$$
using Jensen's inequality, and (\ref{2b}) we obtain  
\begin{align*}
\mathcal{II}&\leq c \, \eta \, \-int_{\B_{\tau R}} \V_{|z|}(|(Dh)_{\B_{\tau R}}|)\, dx \\
&\leq c\, \eta \, \V_{|z|} \Bigl(\-int_{\B_{R}} |Dh|\, dx	\Bigr) \\
&\leq c\, \eta \, \V_{|z|} \Bigl(\-int_{\B_{R}} |Dh-Dw|\, dx +\-int_{\B_{R}} |Dw|\, dx	\Bigr)\\
&\leq c\, \eta \, \V_{|z|} \Bigl(\-int_{\B_{R}} |Dh-Dw|\, dx \Bigr) + c\, \eta \, \V_{|z|} \Bigl(\-int_{\B_{R}} |Du-z|\, dx	\Bigr)\\
&\leq c\, \eta \, \-int_{\B_{R}} \V_{|z|} (|Dh-Dw|)\, dx + c\, \eta \, \-int_{\B_{R}} \V_{|z|} (|Du-z|)\, dx\\
&\leq c\, \eta \,\varepsilon \,\E(\B_{2R},u) + c\, \eta \,\E(\B_{2R},u).
\end{align*}
Putting together estimates for $\mathcal{I}$ and $\mathcal{II}$ we have 
\begin{equation*}
\E(\B_{\tau R},u)\leq C \E(\B_{2R},u) [C_{\eta}\, \tau^{-n}\, \varepsilon +C_{\eta}\, \tau \,\varepsilon +C_{\eta}\, \tau + \eta \, \varepsilon + \eta],
\end{equation*}
choosing $\eta= \tau^{\alpha}$, and consequently $\displaystyle{C_{\eta}= \frac{1}{\tau^{\alpha (\bar{p}-1)}}}$, with $\displaystyle{\alpha <\frac{1}{\bar{p}-1}}$, we have
\begin{equation*}
\E(\B_{\tau R},u) \leq C \tau^{\beta} (\varepsilon \tau^{-n-1} +1)\E(\B_{2 R},u)
\end{equation*}
where $\beta = \min\{\alpha, 1-\alpha(\bar{p}-1) \}$. 

\end{proof}

\begin{prop}\label{prop2beta}
Let $\gamma \in (0,1)$. Then there exists $\delta$ that depends on $\gamma$ and on the characteristics of $\V$ such that: if for some ball $\B_{R}(x_{0})\subset \Omega$ 
\begin{equation}\label{eee}
\E(\B_{2R}(x_{0}),u) \leq \delta \-int_{\B_{2R}(x_{0})} |V(Du)|^{2} dx,  \qquad  |(Du)_{\B_{2R}(x_{0})} - z_{0}|<\frac{\rho}{2}
\end{equation}
hold, then for any $\rho \in (0,1]$
\begin{equation}\label{decay}
\E(\B_{\rho R}(x_{0}),u)\leq c \rho^{\gamma \beta} \E(\B_{2R}(x_{0}),u)
\end{equation}
where $c$ depends on the characteristics of $\V$. 
\end{prop}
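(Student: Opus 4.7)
The plan is to iterate Proposition \ref{tauB} along a geometric sequence of nested balls and then interpolate to an arbitrary radius. Fix $\tau\in (0,1/2]$ small enough that $C\tau^{\beta}\le \tfrac12(\tau/2)^{\gamma\beta}$; this is possible since $\gamma<1$, so $\tau^{\beta(1-\gamma)}$ can be driven below any prescribed constant. Then choose $\varepsilon=\varepsilon(\gamma,\tau)$ so that $C\tau^{\beta}\varepsilon\tau^{-n-1}\le \tfrac12(\tau/2)^{\gamma\beta}$, and let $\delta_{0}=\delta_{0}(\varepsilon,\V)$ be the threshold returned by Proposition \ref{tauB}. With these choices a single application, whenever its hypotheses hold on some ball $\B_{r}$, yields
\begin{equation*}
\E(\B_{\tau r/2})\le (\tau/2)^{\gamma\beta}\,\E(\B_{r}),
\end{equation*}
i.e.\ the excess contracts by $(\tau/2)^{\gamma\beta}$ whenever the radius contracts by $\tau/2$.

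Set $r_{0}:=2R$ and $r_{k+1}:=(\tau/2)\,r_{k}$, so $r_{k}=(\tau/2)^{k}\cdot 2R$. By induction on $k$, assuming (\ref{iii}) with threshold $\delta_{0}$ holds on $\B_{r_{k}}$ and $|(Du)_{\B_{r_{k}}}-z_{0}|<\rho$ (the radius from Lemma \ref{AF}), Proposition \ref{tauB} gives $\E(\B_{r_{k+1}})\le (\tau/2)^{\gamma\beta}\E(\B_{r_{k}})$ and hence $\E(\B_{r_{k}})\le (\tau/2)^{k\gamma\beta}\E(\B_{r_{0}})$. Two facts must be propagated through the induction. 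First, the closeness $|(Du)_{\B_{r_{k}}}-z_{0}|<\rho$ is maintained via the triangle inequality and the increment bound
\begin{equation*}
|(Du)_{\B_{r_{j}}}-(Du)_{\B_{r_{j-1}}}|\le (2/\tau)^{n}\,\-int_{\B_{r_{j-1}}}|Du-(Du)_{\B_{r_{j-1}}}|\,dx,
\end{equation*}
where the last average is controlled by $\V^{-1}_{|(Du)_{\B_{r_{j-1}}}|}\!\bigl(C\,\E(\B_{r_{j-1}})\bigr)$ via Jensen together with Lemmas \ref{lem4} and \ref{dks}; since $\E(\B_{r_{j}})$ decays geometrically, the resulting series over $j$ is summable and, after shrinking $\delta$ below $\delta_{0}$ if necessary, its total is less than $\rho/2$, which combined with the initial $|(Du)_{\B_{r_{0}}}-z_{0}|<\rho/2$ gives the required bound at every step. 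Second, the relative smallness of the excess is preserved because, once $(Du)_{\B_{r_{k}}}$ is $\rho$-close to $z_{0}$ with $|z_{0}|>M+1$, the averaged lower bound $\-int_{\B_{r_{k}}}|V(Du)|^{2}\ge |(V(Du))_{\B_{r_{k}}}|^{2}\ge \tfrac12|V(z_{0})|^{2}$ (using Lemma \ref{dks} to pass between $(V(Du))_{\B_{r_{k}}}$ and $V((Du)_{\B_{r_{k}}})$) stays uniformly positive, while the left-hand side of (\ref{iii}) decays geometrically in $k$.

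Once the discrete decay $\E(\B_{r_{k}})\le (\tau/2)^{k\gamma\beta}\E(\B_{2R})$ is established for every $k$, pick, for an arbitrary $\rho\in(0,1]$, the largest $k\ge 0$ with $r_{k}\ge \rho R$. Then $r_{k+1}<\rho R$ gives $(\tau/2)^{k}<\rho/\tau$ and $\B_{\rho R}\subset \B_{r_{k}}$ with volume ratio at most $(2/\tau)^{n}$; comparing averages,
\begin{equation*}
\E(\B_{\rho R})\le (2/\tau)^{n}\,\E(\B_{r_{k}})\le C(\tau,n)(\tau/2)^{k\gamma\beta}\,\E(\B_{2R})\le c\,\rho^{\gamma\beta}\,\E(\B_{2R}),
\end{equation*}
with $c=c(\V,\gamma)$ absorbing all $\tau$- and $n$-dependent constants.

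The main obstacle is the bookkeeping of the two hypotheses (\ref{iii}) through the induction. Concretely, one must balance $\delta$ against two competing requirements: it must be small enough that the geometric series of increments $|(Du)_{\B_{r_{j}}}-(Du)_{\B_{r_{j-1}}}|$ stays below $\rho/2$ (forcing $\delta$ to be small in terms of $\rho$ and the characteristics of $\V$, via the inverse of $\V_{|z_0|}$), and simultaneously small enough that the geometric decay of $\E(\B_{r_{k}})$ outpaces any mild drift of the lower bound on $\-int|V(Du)|^{2}$ caused by the movement of $(Du)_{\B_{r_k}}$. Once these two smallness conditions are merged into a single choice of $\delta$, the iteration closes and the conclusion follows.
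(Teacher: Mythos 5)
Your proposal is correct and takes essentially the same approach as the paper: fix $\tau$ and $\varepsilon$ so that a single application of Proposition~\ref{tauB} contracts the excess by $(\tau/2)^{\gamma\beta}$, iterate on the nested balls $\B_{r_k}$ while propagating the two hypotheses in~(\ref{iii}), and interpolate between dyadic radii. The only (minor) difference is that you sum the geometric series of increments $|(Du)_{\B_{r_j}}-(Du)_{\B_{r_{j-1}}}|$ explicitly to keep $(Du)_{\B_{r_k}}$ in the $\rho$-ball around $z_0$, whereas the paper checks the single step $|(Du)_{\B_{\tau R}}-(Du)_{\B_{2R}}|\leq c\,\delta^{1/p}/\tau^{n/p}<\rho/2$ and leaves the accumulated drift implicit, referring to~\cite{ELM2} for the propagation of the first inequality in~(\ref{iii}).
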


\begin{proof}
Let $\Lambda(\varepsilon,\tau)= C\tau^{\beta}(\varepsilon \tau^{-n-1}+1)$ where $C$ depends on the characteristics of $\V$ and on $n$. 
Let $\varepsilon= \varepsilon(\tau)$ such that 
$$
\Lambda(\varepsilon, \tau)\leq \min \Bigl\{\Bigl(\frac{\tau}{2}\Bigr)^{\gamma \beta}, \frac{1}{4} \Bigr\}.
$$
Let $\delta= \delta(\tau)$ such that Proposition \ref{tauB} holds true and so small that are verified
$$
(1+\tau^{-\frac{n}{2}})\delta^{\frac{1}{2}}< \frac{1}{2} \quad \mbox{ and } \quad c\frac{\delta^\frac{1}{p}}{\tau^\frac{n}{p}}<\frac{\rho}{2},
$$
where $c$ and $p$ will be specified later. 

\noindent
With these choices we can prove that the inequalities in (\ref{iii}) hold when we replace $\B_{2R}$ with $\B_{\tau R}$, the first one being necessary to obtain the first inequality following exactly the lines of the proof of Proposition 28 in \cite{ELM2}.

\noindent
Concerning the second inequality we first observe that
$$
|(Du)_{\B_{\tau R}} -z_{0}| < |(Du)_{\B_{\tau R}}- (Du)_{\B_{2 R}}| + \frac{\rho}{2}.  
$$
Moreover, taking into account that $\V$ is of type $(p_{0}, p_{1})$ and using Lemma \ref{lem4}, for some $p>1$ we get
\begin{align*}
|(Du)_{\B_{\tau R}} - (Du)_{\B_{2 R}}|  
& \leq \-int_{\B_{\tau R}} |Du- (Du)_{\B_{2 R}}| \, dx  \\
& \leq \Bigl( \-int_{\B_{\tau R}} |Du- (Du)_{\B_{2 R}}|^{p} dx \Bigr)^\frac{1}{p} \\
& \leq c\, \Bigl( \-int_{\B_{\tau R}} \V_{|(Du)_{\B_{2R}}|}|Du- (Du)_{\B_{2 R}}| dx \Bigr)^\frac{1}{p} \\
& \leq c\, \Bigl( \-int_{\B_{\tau R}} |V(Du)- (V(Du))_{\B_{2 R}}|^2 dx \Bigr)^\frac{1}{p} \\	
& \leq \frac{c}{\tau^{\frac{n}{p}}} \Bigl( \-int_{\B_{2 R}} |V(Du)- (V(Du))_{\B_{2 R}}|^2 dx \Bigr)^\frac{1}{p} \\	
& \leq \frac{c}{\tau^{\frac{n}{p}}} \delta^\frac{1}{p} \Bigl(\-int_{\B_{2R}} |V(Du)|^{2} dx\Bigr)^{\frac{1}{p}}  \leq c\frac{\delta^\frac{1}{p}}{\tau^{\frac{n}{p}}}
\end{align*}
where in the last inequality we use that by Lemma \ref{lem4} and Jensen inequality 
\begin{align}\label{ccc}
\-int_{\B_{2R}(x)} |V(Du)|^{2} dy &\sim \-int_{\B_{2R}(x)}  \V(|Du|)\, dy \nonumber \\
&\geq \V \Bigl( \-int_{\B_{2R}(x)}  |Du|\, dy \Bigr) \geq \V(|(Du)_{\B_{2R}}|) \geq \V(M)>0. 
\end{align}
So, the smallness assumptions in (\ref{iii}) are satisfied for $\B_{\tau R}$. 
By induction we get
\begin{align*}
\E\Bigl(\B_{(\frac{\tau}{2})^{k} 2R} \Bigr)\leq \min\Bigl\{ \Bigl(\frac{\tau}{2}\Bigr)^{\gamma \beta k}, \frac{1}{4^{k}}\Bigr\} \E(\B_{2R})
\end{align*}
which is the claim. 

\end{proof}

\begin{thm}\label{C1alpha}
Let $z_{0}\in \R^{n}$ with $|z_{0}|>M+1$ so that (\ref{z1}) holds in $\B_{\rho}(x_{0})$, and let $u\in W^{1, \V}(\Omega, \R^{N})$ be a minimizer of $\mathcal{F}$. If for some $x_{0}\in \Omega$ 
\begin{align}\label{limiterho}
\lim_{r \rightarrow 0} \-int_{\B_{r}(x_{0})} |V(Du)-V(z_{0})|^{2} =0
\end{align}
then in a neighborhood of $x_{0}$ the minimizer $u$ is $C^{1, \overline{\alpha}}$ for some $\overline{\alpha}<1$.  
\end{thm}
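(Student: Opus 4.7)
The plan is to iterate Proposition \ref{prop2beta} to obtain a Campanato-type decay estimate for $V(Du)$ around points near $x_{0}$, and then convert it into H{\"o}lder continuity of $Du$. As a first step I would check that the smallness conditions in (\ref{eee}) hold at $x_{0}$ for some sufficiently small radius $R_{0}$. The hypothesis (\ref{limiterho}) gives $(V(Du))_{\B_{r}(x_{0})}\to V(z_{0})$ and $\E(\B_{r}(x_{0}),u)\to 0$ as $r\to 0$. Since $|z_{0}|>M+1$ forces $|V(z_{0})|>0$, the denominator $\-int_{\B_{2r}(x_{0})}|V(Du)|^{2}\,dx$ stays bounded away from zero, as in (\ref{ccc}); hence the excess ratio vanishes. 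Continuity of the inverse of $V$ away from the origin also yields $(Du)_{\B_{r}(x_{0})}\to z_{0}$. Thus, for any prescribed $\delta>0$ coming from Proposition \ref{prop2beta}, both inequalities in (\ref{eee}) are satisfied at $x_{0}$ once $R_{0}$ is chosen small enough.

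Next, I would propagate these smallness conditions to a neighborhood $\B_{r_{0}}(x_{0})$ via continuity in $x$ of the averages $(Du)_{\B_{2R_{0}}(x)}$ and $\E(\B_{2R_{0}}(x),u)$ (a standard consequence of $L^{1}_{loc}$ continuity of translations), shrinking $r_{0}$ if needed. Proposition \ref{prop2beta}, applied at every $x\in\B_{r_{0}}(x_{0})$ with a fixed $\gamma\in(0,1)$, then yields
\begin{equation*}
\E(\B_{\rho R_{0}}(x),u)\leq c\,\rho^{\gamma\beta}\,\E(\B_{2R_{0}}(x),u)\qquad \forall\,\rho\in(0,1].
\end{equation*}
Since $\E(\B_{2R_{0}}(x),u)$ is uniformly bounded on $\B_{r_{0}}(x_{0})$, multiplying by $|\B_{\rho R_{0}}|$ gives the Campanato-type estimate
\begin{equation*}
\int_{\B_{r}(x)}|V(Du)-(V(Du))_{\B_{r}(x)}|^{2}\,dy \leq C\,r^{n+\gamma\beta}
\end{equation*}
for every $x\in\B_{r_{0}}(x_{0})$ and every $r\in (0,R_{0}]$.

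By Campanato's characterization of H{\"o}lder spaces, this integral bound implies $V(Du)\in C^{0,\gamma\beta/2}(\B_{r_{0}}(x_{0}))$. Since the local averages of $Du$ remain in a compact subset of $\R^{Nn}\setminus\{0\}$ clustered around $z_{0}$, the equivalence $|V(z_{1})-V(z_{2})|^{2}\sim \V_{|z_{1}|}(|z_{1}-z_{2}|)$ from Lemma \ref{lem4}, restricted to that subset, provides a bilipschitz equivalence between $|V(z_{1})-V(z_{2})|$ and $|z_{1}-z_{2}|$. Hence $Du$ itself is H{\"o}lder continuous on $\B_{r_{0}}(x_{0})$ with the same exponent $\gamma\beta/2$, and $u\in C^{1,\overline{\alpha}}$ for some $\overline{\alpha}\in (0,1)$.

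The main obstacle I foresee is the propagation step: guaranteeing that not only $x_{0}$ but every nearby $x$ satisfies both conditions in (\ref{eee}), and that the iteration of Proposition \ref{prop2beta} can be carried out without the mean values $(Du)_{\B_{\tau^{k}R_{0}}(x)}$ drifting out of the ball $\B_{\rho}(z_{0})$ where Lemma \ref{AF} supplies the quasiconvexity needed for the almost-$\A$-harmonicity. The required control on $|(Du)_{\B_{\tau R}}-(Du)_{\B_{2R}}|$ is already embedded in the proof of Proposition \ref{prop2beta}, but tracking it consistently both along the dyadic iteration in $r$ and as $x$ varies in $\B_{r_{0}}(x_{0})$ is the delicate point.
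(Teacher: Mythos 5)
Your proposal follows essentially the same route as the paper: use (\ref{limiterho}) together with the lower bound (\ref{ccc}) to verify the smallness conditions (\ref{eee}) near $x_{0}$, then apply Proposition \ref{prop2beta} to obtain the excess decay and conclude via Campanato's characterization. The one micro-step worth flagging is your justification of $(Du)_{\B_{r}(x_{0})}\to z_{0}$ by ``continuity of the inverse of $V$'': since averaging does not commute with $V$, the fact that $(V(Du))_{\B_{r}}\to V(z_{0})$ does not by itself control $(Du)_{\B_{r}}$; the paper instead applies Jensen to the convex function $\V_{|z_{0}|}(|\cdot|)$ and Lemma \ref{lem4} to get
\begin{equation*}
\V_{|z_{0}|}\bigl(|(Du)_{\B_{r}(x)}-z_{0}|\bigr)\leq \-int_{\B_{r}(x)}\V_{|z_{0}|}(|Du-z_{0}|)\,dy\leq c\-int_{\B_{r}(x)}|V(Du)-V(z_{0})|^{2}\,dy,
\end{equation*}
which yields the needed convergence directly. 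With this replacement your argument matches the paper's; the extra detail you supply on converting the Campanato bound for $V(Du)$ into H\"older continuity of $Du$ (using the local bilipschitz behaviour of $V$ away from the origin) is a correct and useful elaboration of a step the paper leaves implicit.
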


\begin{proof}
By Jensen inequality and Lemma \ref{lem4} we have 
\begin{align*}
\V_{|z_{0}|}(|(Du)_{\B_{r}(x)} -z_{0}|) &\leq \V_{|z_{0}|} \Bigl(\-int_{\B_{r}(x)} |Du-z_{0}| \, dy	\Bigr) \\
&\leq \-int_{\B_{r}(x)} \V_{|z_{0}|}(|Du-z_{0}|) \, dy \\
&\leq c \-int_{\B_{r}(x)} |V(Du)- V(z_{0})|^{2} dy
\end{align*}
from which by (\ref{limiterho}) we can conclude that
$$
|(Du)_{\B_{2R}(x)}-z_{0}|< \rho
$$
for a suitable $R>0$. 
Moreover by Lemma \ref{lem4}, Jensen's inequality, (\ref{ccc}), 
and (\ref{limiterho}) we get
\begin{align*}
\E(\B_{2R}(x), u)\leq \-int_{\B_{2R}(x)} |V(Du)- V(z_{0})|^{2} dy \leq \delta \-int_{\B_{2R}(x)} |V(Du)|^{2} dy.
\end{align*}
Hence we have that the assumptions of Proposition \ref{prop2beta} are verified in a neighborhood of $x_{0}$, say in $\B_{s}(x_{0})$. 
Then by (\ref{decay}) we have 
$$
\E(\B_{\rho R}(x),u) \leq c \rho^{\gamma \beta} \E(\B_{2R}(x),u) \quad \forall x\in \B_{s}(x_{0})
$$ 
and by Campanato's characterization of H\"older continuity we deduce that $u\in C^{1, \overline{\alpha}}(\B_{s}(x_{0}))$ for some $\overline{\alpha}<1$. 
 
\end{proof}

\noindent
For $u\in W^{1, \V}(\Omega, \R^{N})$, we define the set of regular points $\mathcal{R}(u)$ by
$$
\mathcal{R}(u)= \{ x\in \Omega : u \mbox{ is Lipschitz near } x \}. 
$$
It follows that $\mathcal{R}(u)\subset \Omega$ is open. 
Finally we prove the following partial regularity result. 
\begin{cor}
Assume that $f$ satisfies $(\mathcal{H}1)-(\mathcal{H}5)$. Then, for every minimizer $u\in W^{1, \V}(\Omega, \R^{N})$ of $\mathcal{F}$, the regular set $\mathcal{R}(u)$ is dense in $\Omega$.  
\end{cor}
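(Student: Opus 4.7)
The plan is to argue by a simple dichotomy on an arbitrary open ball $\B_0\Subset\Omega$: either $|Du|$ is essentially bounded by $M+1$ on some smaller ball, in which case $u$ is automatically Lipschitz there, or we can locate an $L^2$-Lebesgue point of $V(Du)$ at which $|Du|>M+1$ and invoke Theorem \ref{C1alpha}. Since $\mathcal{R}(u)$ is open by definition, showing that $\mathcal{R}(u)\cap\B_0\ne\emptyset$ for every such $\B_0$ is enough to conclude density.

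First, I would fix an arbitrary open ball $\B_0\Subset\Omega$ and distinguish two cases. \emph{Case 1 (trivial case).} Suppose there exists an open ball $\B'\subset\B_0$ with $|Du(x)|\le M+1$ for a.e.\ $x\in\B'$. Then $u\in W^{1,\infty}(\B')$, so $u$ is Lipschitz on $\B'$, hence $\B'\subset\mathcal{R}(u)\cap\B_0$ and we are done. \emph{Case 2 (main case).} Otherwise, for every open ball $\B'\subset\B_0$ the set $\{x\in\B':\,|Du(x)|>M+1\}$ has positive Lebesgue measure, so in particular the set $E:=\{x\in\B_0:\,|Du(x)|>M+1\}$ has positive measure.

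In Case 2, the key observation is that $V(Du)\in L^2_{\rm loc}(\Omega)$ because $|V(z)|^2\sim\V(|z|)$ by Lemma \ref{lem4} and $u\in W^{1,\V}(\Omega)$. By the Lebesgue differentiation theorem in the $L^2$ form, a.e.\ point of $\Omega$ is simultaneously a Lebesgue point of $Du$ and an $L^2$-Lebesgue point of $V(Du)$. Since $|E|>0$, I can pick such a point $x_0\in E$ and set $z_0:=Du(x_0)$. By construction $|z_0|>M+1$, and the continuity of $V$ together with the Lebesgue point property yields
\begin{equation*}
\lim_{r\to 0}\-int_{\B_r(x_0)}|V(Du)-V(z_0)|^2\,dx=0,
\end{equation*}
which is exactly the hypothesis (\ref{limiterho}) of Theorem \ref{C1alpha}. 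That theorem then gives $u\in C^{1,\overline\alpha}$ in a neighborhood of $x_0$; in particular $u$ is Lipschitz near $x_0$, so $x_0\in\mathcal{R}(u)\cap\B_0$.

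There is no serious obstacle here: the whole analytic content has been absorbed into Theorem \ref{C1alpha} (and behind it, the Caccioppoli estimate, the $\mathcal{A}$-harmonic approximation, and the excess decay of Propositions \ref{tauB} and \ref{prop2beta}). The only thing to be slightly careful about is the alignment of the precise representatives: at a point that is a Lebesgue point of both $Du$ and $V(Du)$, the precise value of $V(Du)$ coincides with $V$ applied to the precise value of $Du$, by continuity of $V$, so choosing $z_0=Du(x_0)$ is legitimate and produces the $z_0$ with $|z_0|>M+1$ required by the hypothesis in Theorem \ref{C1alpha}.
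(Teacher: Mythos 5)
Your proposal is correct and takes essentially the same approach as the paper: the paper argues by contradiction that a point $x\notin\overline{\mathcal{R}(u)}$ would force $|Du|$ to be essentially bounded near $x$ (hence $u$ Lipschitz there, hence $x\in\mathcal{R}(u)$), while you phrase the same Lebesgue-point dichotomy directly on an arbitrary ball $\B_0$. The mechanism — either $|Du|$ is essentially bounded by roughly $M$ on a subball, yielding Lipschitz continuity trivially, or a positive-measure set gives an $L^2$-Lebesgue point of $V(Du)$ with $|Du(x_0)|>M+1$ to feed into Theorem \ref{C1alpha} — is identical.
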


\begin{proof}
Using the characterization $(iv)$ of Theorem \ref{thm1} we can find $M>0$ such that the assumptions of Theorem \ref{C1alpha} are satisfied near every $z_{0}\in \R^{Nn}: |z_{0}|>M$.  
By Theorem \ref{C1alpha} we have that $u\in C^{1, \alpha}$ near every $x_{0}\in \Omega$ that satisfies 
$$
\lim_{r\rightarrow 0} \-int_{\B_{r}(x_{0})} |V(Du)-V(z_{0})|^{2} dx =0
$$
and these points $x_{0}$ belong to $\mathcal{R}(u)$. \\
By contradiction assume that some $x\in \Omega$ is not contained in $\overline{\mathcal{R}(u)}$; then in a neighborhood of $x$ we cannot find $x_{0}$ as before. Thus, $V(Du)$ is essentially bounded by $M$ on this neighborhood and $u$ is Lipschitz near $x$. Consequently $x\in \mathcal{R}(u)$ and we have reached the desired contradiction. 

\end{proof}

\addcontentsline{toc}{section}{\refname}

\end{document}